\newtheoremstyle{mystyle}{}{}{\slshape}{2pt}{\scshape}{.}{ }{} 
\newtheorem{thm}{Theorem}[section]
\newtheorem{prop}[thm]{Proposition}
\newtheorem{lemme}[thm]{Lemma}
\newtheorem{lemma}[thm]{Lemma}
\newtheorem{fact}[thm]{Fact}
\theoremstyle{definition}
\newtheorem{defi}[thm]{Definition}
\newtheorem{definition}[thm]{Definition}
\theoremstyle{mystyle}
\theoremstyle{remark}
\newtheorem{rem}[thm]{Remark}
\newcommand{\monster}{\mathcal U}
\newcommand{\ordI}{\mathcal I}
\newcommand{\rel}[1]{\mathrel{#1}}
\newcommand{\ignore}[1]{}
\DeclareMathOperator{\tp}{tp}
\DeclareMathOperator{\dlr}{opD}
\DeclareMathOperator{\dpr}{dp-rk}
\DeclareMathOperator{\str}{st-dim}
\DeclareMathOperator{\op}{op}
\DeclareMathOperator{\aut}{Aut}
\def\indsym#1#2{%
 \setbox0=\hbox{$\m@th#1x$}%
 \kern\wd0%
 \hbox to 0pt{\hss$\m@th#1\mid$\hbox to 0pt{$\m@th#1^{#2}$\hss}\hss}%
 \lower.9\ht0\hbox to 0pt{\hss$\m@th#1\smile$\hss}%
 \kern\wd0}
\def\nindsym#1#2{%
 \setbox0=\hbox{$\m@th#1x$}%
 \kern\wd0%
 \hbox to 0pt{\hss$\m@th#1\not$\kern1.4\wd0\hss}
 \hbox to 0pt{\hss$\m@th#1\mid$\hbox to 0pt{$\m@th#1^{#2}$\hss}\hss}%
 \lower.9\ht0\hbox to 0pt{\hss$\m@th#1\smile$\hss}%
 \kern\wd0}
\title{Linear orders in NIP structures}
\author{Pierre Simon\footnote{Partially supported by NSF (grant no. 1665491) and a Sloan fellowship.}}
\date{}
\begin{document}
\maketitle

\begin{abstract}
	We show that every unstable NIP theory admits a $\bigvee$-definable linear quasi-order, over a finite set of parameters. In particular, if the theory is $\omega$-categorical, then it interprets an infinite linear order. This partially answers a longstanding open question.
\end{abstract}

\section{Introduction}

A first order structure is \emph{NIP} if every family of uniformly definable sets has finite VC-dimension. We like to think of NIP structures as being \emph{geometric}, indeed the classical examples comprise algebraically closed fields (the domain of algebraic geometry), real closed fields (semi-algebraic geometry), the field $\mathbb Q_p$ of $p$-adic numbers and algebraically closed valued fields (non-archimedean geometry). This class contains that of stable structures, for which we now have an extremely rich theory (see \cite{Sh:c}, \cite{PillayBook}).

In his paper \cite{Sh10}, Shelah introduced NIP theories and proved that any unstable NIP theory is SOP, that is admits a definable partial order with infinite chains. A longstanding open question asks whether this can be strengthened to an interpretable infinite linear order\footnote{We are not aware of any occurrence of this question in print. It seems to have been raised independently by several people, including at least Shelah and Hrushovski, from whom we first heard it.}. In this paper, we give a positive answer to a weaker form of this question: we find a $\bigvee$-definable equivalence relation such that the quotient by it is infinite and linearly ordered: see Theorem \ref{th:main}. In the case of $\omega$-categorical theories, we obtain a \emph{bona fide} interpretable linear order.

In fact, we show slightly more. Following \cite{GuinHill}, we define the op-dimension of a type as a variation on the dp-rank which only sees order-like dimensions. This dimension precisely gives the number of independent linear orders that one can define on a type. In the last section, we then define stable dimension as a counterpart to op-dimension, but this is not used elsewhere in the paper.

\smallskip
There is an important difference between our result and Shelah's theorem, giving a partial order. The existence of a partial order is a  non-structure result. Since partial orders can be arbitrarily complicated, it gives no positive information on the models of the theory. A linear order however is a much more constrained object. In fact, we hope that this theorem could open up a new perspective on NIP theories. It shows that NIP is a more structured world than was thought before and makes it reasonable to expect classification statements and analyses similar to those for stable (or superstable) theories, where linear orders would be explicitly present. Indeed we would like the linear orders to have similar role in NIP theories (or subclasses of it) as for instance strongly minimal sets play in the study of $\omega$-stable structures. Isomorphism types of linear orders could replace dimensions of regular types (or rather complement them, since an NIP theory can have  stable components). Of course, this still seems far away. A natural special case to initiate this program is the case of $\omega$-categorical structures. This will be studied in future works, starting with \cite{rank_one} which deals with $\omega$-categorical structures of thorn rank 1 and completely classifies the primitive ones.

\section{Preliminaries}

Throughout this paper, $T$ is a complete first order theory in a language $L$. We let $\monster$ be a monster model, which is $\bar \kappa$-saturated and $\bar \kappa$-strongly homogeneous for some large enough $\bar \kappa$. All sets of parameters considered have size smaller that $\bar \kappa$.

We use the notation $\phi^0$ to mean $\neg \phi$ and $\phi^1$ to mean $\phi$.

Letters such as $a,b,c$ usually denote finite tuples of variables, whereas $A,B,C$ denote small subsets of $\monster$.

The concatenation of two sequences $I$ and $J$ will be denoted $I+J$ or $I{}^\frown J$. The first notation is used for indiscernible sequences and the second one for concatenation of tuples or families of any kind.

\medskip\noindent
\underline{\textbf{Assumption}}: Throughout this paper, we assume that $T$ is NIP.


\subsection{Invariant types}

By an $A$-invariant type, we mean a global type $p$ which is invariant under automorphisms fixing $A$ pointwise. If $p(x)$ and $q(y)$ are both $A$-invariant, we can define the type $p(x)\otimes q(y)$ whose restriction to any set $B\supseteq A$ is $\tp(a,b/B)$, where $b\models q|B$ and $a\models p|B b$. It is also an $A$-invariant type. A Morley sequence of $p$ over $A$ is a sequence $I=(a_i:i\in \mathcal I)$ such that for each $i\in \mathcal I$, $a_i\models p|Aa_{<i}$. A Morley sequence of $p$ over $A$ is indiscernible over $A$ and all Morley sequences of $p$ over $A$ indexed by the same order have the same type over $A$.

Two invariant types $p(x)$ and $q(y)$ \emph{commute} if $p(x)\otimes q(y)= q(y)\otimes p(x)$.


\subsection{Indiscernible sequences}
We set here some terminology concerning indiscernible sequences that we copy from \cite{decomposition}.

Sequences $(I_i:i<\alpha)$ are \emph{mutually indiscernible} if each $I_i$ is indiscernible over $I_{\neq i}$.

The EM-type of an indiscernible sequence $I$ is the set $\{p_n:n<\omega\}$, where $p_n=\tp(a_1,\ldots,a_n)$ for some/any elements $a_1<_I \cdots <_I a_n$ of $I$.

If $I$ is an indiscernible sequence, we let $\op(I)$ denote the sequence $I$ indexed in the opposite order. If $I$ is an endless indiscernible sequence and $T$ is NIP, let $\lim(I)$ denote the limit type of $I$: the global $I$-invariant type defined by $\phi(x)\in \lim(I)$ if $\phi(I)$ is cofinal in $I$. Observe that if $\op(I_1)$ is a Morley sequence of $\lim(I)$ over $I$, then $I+I_1$ is indiscernible.

A cut $\mathfrak c =(I_0,I_1)$ of $I$ is a pair of subsequences of $I$ such that $I_0$ is an initial segment of $I$ and $I_1$ the complementary final segment, {\it i.e.}, $I=I_0+I_1$. If $J$ is a sequence such that $I_0+J+I_1$ is indiscernible, we say that $J$ \emph{fills} the cut $\mathfrak c$. To such a cut, we can associate two limit types: $\lim(I_0)$ and $\lim(\op(I_1))$ (which are defined respectively if $I_0$ and $\op(I_1)$ have no last element). The cut $(I_0,I_1)$ is \emph{Dedekind} if both $I_0$ and $\op(I_1)$ have infinite confinalities, in particular are not empty.


We now recall the important theorem about shrinking of indiscernibles and introduce a notation related to it (see e.g. \cite[Chapter 3]{NIPbook}).

\begin{defi}
A finite convex equivalence relation on $\mathcal I$ is an equivalence relation $\sim$ on $\mathcal I$ which has finitely many classes, all of which are convex subsets of $\mathcal I$.
\end{defi}

\begin{prop}[Shrinking of indiscernibles]\label{shrinking1}
Let $( a_t)_{t\in \mathcal I}$ be an indiscernible sequence. Let $d$ be any tuple and $\phi(y_0,..,y_{n-1};d)$ a formula. There is a finite convex equivalence relation $\sim_\phi$ on $\mathcal I$ such that given: 

-- $t_0<\ldots<t_{n-1}$ in $\mathcal I$;

-- $s_0<\ldots<s_{n-1}$ in $\mathcal I$ with $t_k \sim_\phi s_k$ for all $k$;

\noindent
we have $\phi(a_{t_0},..,a_{t_{n-1}};d) \leftrightarrow \phi(a_{s_0},...,a_{s_{n-1}};d)$.

Furthermore, there is a coarsest such equivalence relation.
\end{prop}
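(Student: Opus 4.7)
I would prove the shrinking-of-indiscernibles property by induction on $n$, with the key input being the finiteness of the alternation number of $\phi$ (a consequence of NIP).

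For the base case $n=1$: given $\phi(y_0; d)$, consider the set $X := \{t \in \mathcal I : \models \phi(a_t; d)\}$. I claim $X$ is a finite union of convex subsets of $\mathcal I$, with the number of pieces bounded by the alternation number of $\phi(y;z)$. Indeed, if $X$ were not a finite union of convex sets, we could extract arbitrarily long alternating subsequences $t_1<t_2<\cdots$ with $\phi(a_{t_i};d)$ flipping truth value; by Ramsey and compactness applied to $(a_t)$ one produces an indiscernible sequence realizing arbitrarily many alternations of $\phi$, contradicting NIP. We take $\sim_\phi$ to have as classes the maximal convex pieces of $X$ and of $\mathcal I \setminus X$.

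For the inductive step: given $\phi(y_0, \ldots, y_{n-1}; d)$, I would reduce to controlling one coordinate at a time. For each coordinate $k$ and each choice of increasing indices $\bar s_{-k}$ from $\mathcal I$ for the remaining positions, the base case applied to the specialization of $\phi$ in $y_k$ produces a partition of the appropriate interval into at most $N_\phi$ convex pieces, where $N_\phi$ is a uniform bound from the alternation number. Indiscernibility of $(a_t)$, together with compactness and the finiteness of the possible partition shapes, lets me extract a single finite convex equivalence relation $\sim_k$ on $\mathcal I$ valid for every choice of the remaining coordinates. Taking $\sim_\phi$ to be the common refinement of $\sim_0, \ldots, \sim_{n-1}$ (still finite and convex) then works: given two pointwise $\sim_\phi$-equivalent increasing tuples, one interpolates by a chain of increasing tuples obtained by single-coordinate moves, each preserving the truth value of $\phi$ by construction of $\sim_k$.

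For the coarsest relation, I would verify that the set of finite convex equivalence relations satisfying the shrinking property is closed under the join operation (the smallest equivalence relation containing two given ones); the join of two finite convex equivalence relations is itself finite and convex, and if $\sim, \sim'$ both work then two $(\sim\vee\sim')$-pointwise-equivalent increasing tuples can be connected by a chain of single-coordinate moves within individual $\sim$- or $\sim'$-classes, with convexity ensuring all intermediate tuples remain increasing. Since the number of classes has a uniform upper bound coming from NIP, iterating joins produces a unique maximal element. The main obstacle I anticipate is the uniformity step in the induction: the per-choice convex partition a priori depends on the fixed coordinates, and promoting it to a single global $\sim_k$ requires combining NIP with indiscernibility in a nontrivial way. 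This is where NIP is doing the real work.
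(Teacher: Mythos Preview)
The paper does not actually prove this proposition; it is recalled as a known result with a pointer to \cite[Chapter 2]{NIPbook}, so there is no in-paper argument to compare against. Your base case is correct, and your join argument for the existence of a coarsest relation is essentially right (the interpolation step needs a little care when several coordinates fall into the same join-class, but convexity does make it go through).

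The genuine gap is exactly the one you flag: the uniformity step in the induction. For each fixed choice of the other coordinates you obtain a convex partition with at most $N_\phi$ pieces, but the cut points depend on that choice, and there are infinitely many choices. Indiscernibility of $(a_t)$ is over $\emptyset$, not over $d$, so it gives no direct control on how those cuts move as the frozen coordinates vary; and ``finiteness of the possible partition shapes'' bounds only the number of pieces, not their locations. As written, you have not supplied the mechanism that collapses the infinitely many per-choice partitions into a single $\sim_k$. The standard fix is to carry through the induction a uniform bound $N_\phi$ depending only on the formula $\phi(\bar y;z)$ (not on $d$ or the sequence), and then, in passing from $n-1$ to $n$, to argue that as the frozen coordinate $s$ ranges over $\mathcal I$ the coarsest $(n{-}1)$-variable relation $\sim_s$ can itself change across only finitely many cuts of $\mathcal I$. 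That last point is a further application of bounded alternation, now to formulas comparing truth values of $\phi$ at tuples straddling a candidate cut. Without this extra argument---or an alternative route, such as a direct compactness argument extracting an IP pattern from the failure of shrinking---the inductive step does not close.
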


Given $A$, $I=( a_t)_{t\in \mathcal I}$, $\phi(y_0,\ldots,y_{n-1};d)$ as above, we let $\textsf T(I,\phi)$ denote the number of equivalence classes in the coarsest $\sim_\phi$ given by the proposition. By compactness, the number $\textsf T(I,\phi)$ is bounded by an integer depending only on $\phi(y_0,\ldots,y_{n-1};z)$.

If $I\subseteq J$ are indiscernible sequences and $A$ is any set of parameters, we write $I \unlhd_A J$ if for every $\phi(y_0,\ldots,y_{n-1};d)\in L(A)$, we have $\textsf T(I,\phi)=\textsf T(J,\phi)$. Intuitively, formulas with parameters in $A$ do not alternate more on $J$ than they do on $I$.

Note the following special cases:

\begin{itemize}
\item If $I$ is indiscernible over $A$, then $I\unlhd_A J$ simply means that $J$ is $A$-indiscernible and contains $I$.

\item If $I$ is without endpoints, $I \unlhd_A I_0+I+I_1$ is equivalent to the statement that $I_0$ is a Morley sequence in $\lim(\op(I))$ over $IA$ and $\op(I_1)$ is a Morley sequence in $\lim(I)$ over $AII_0$.

\end{itemize}

The following will be used repeatedly without mention.

\begin{lemme} If $I=(a_i:i \in \mathcal I)$ is indiscernible, where the indexing order $\mathcal I$ is dense without endpoints, then given any $\mathcal I \subseteq \mathcal J$ and any set $A$ of parameters, we can find $J=(a_i:i\in \mathcal J)$ extending $I$ such that $I\unlhd_A J$.
\end{lemme}
\begin{proof}
We give two arguments for this. For the first one, let $M$ be a model containing $I$ and $A$.  Assume that $I$ is a sequence of $k$-tuples. Expand $M$ by adding a $k$-ary predicate $P(x)$ to name the sequence $I$ and a $2k$-ary predicate $\leq_P$ for the order on that sequence. Let $M_P$ be the resulting structure. Let $N_P$ be a $|J|^+$-saturated elementary extension of $M_P$. Then $P(x)$ names a sequence $K$, ordered by $\leq_P$ which extends $I$. We then have, in the original language, $I \unlhd_A K$ since the fact that a formula has a certain number of alternations is first order expressible in $L_P$. By saturation, we can find a subsequence $J$ of $K$ with the right order type.

We can also build $J$ explicitly as follows: for every cut $\mathfrak c=(I_0,I_1)$ of $I$, by density of $I$ either $I_0$ has no last element or $I_1$ has no first element. Assume for example the latter. Let $J_{\mathfrak c}$ be a Morley sequence of $\lim(\op(I_1))$ over everything constructed so far which is indexed by the subsequence of $\mathcal J$ which lies in the cut of $\mathcal I$ corresponding to $\mathfrak c$. Doing this iteratively for all cuts of $I$ and adding all those sequences to $I$, we obtain $J$ as required.
\end{proof}

\subsection{Dp-rank}

The dp-rank will not be used in this paper, but we will define variations of it and hence it seems useful to recall its definition and some of its properties.

\begin{defi}
\index{dp-rank}
Let $\pi$ be a partial type over a set $A$, and let $\kappa$ be a (finite or infinite) cardinal. We say $\dpr(\pi,A)< \kappa$ if for every family $(I_t : t<\kappa)$ of mutually indiscernible sequences over $A$ and $b\models \pi$, there is $t<\kappa$ such that $I_t$ is indiscernible over $Ab$.

If $b\in \monster$, then $\dpr(b/A)$ stands for $\dpr(\tp(b/A),A)$.
\end{defi}

A theory $T$ is NIP if and only if we have $\dpr(\pi,A)<|T|^{+}$ for every finitary type $\pi$ (\cite[Observation 4.13]{NIPbook}).

The term \emph{rank} used for dp-rank is misleading as the dp-rank is a cardinal and not an ordinal. (Strictly speaking, it is not a cardinal, since we only defined $\dpr(\pi,A)<\kappa$ and not $\dpr(\pi,A)=\kappa$. This is due to a problem at limit cardinals: we can have say $\dpr(\pi,A)<\aleph_0$ and yet $\dpr(\pi,A)\geq n$ for each $n<\omega$. Some authors write this as $\dpr(\pi,A)=\aleph_0^{-}$.) The reason for it is historical: Shelah gave a more general definition of dp-ranks in \cite{Sh863}, which were indeed ordinals. The definition we use now was extracted from that paper in \cite{Us} and the name stayed. In the following sections, we will define two variations on the dp-rank, one from \cite{GuinHill} which sees only the order component, and a new one which sees only the stable component. We will call them \emph{dimensions} instead of \emph{ranks}.

\smallskip
Some properties of dp-rank in NIP theories (of unequal difficulties):

\begin{itemize}
	\item If $A\subseteq B$ and $\pi$ is over $A$, then $\dpr(\pi,A)=\dpr(\pi,B)$.
	\item Given $\pi$ a partial type over $A$ and let $\kappa$ be any cardinal. Then we have $\dpr(\pi,A)< \kappa$ if and only if for any family $(I_t :t \in X)$ of sequences, mutually indiscernible over $A$ and any $b\models \pi$, there is $X_0 \subseteq X$ of size $<\kappa$ such that $(I_t : t\in X\setminus X_0)$ are mutually indiscernible over $Ab$.
	\item (Additivity) Let $a,b \in \monster$, $A$ a small set of parameters and $\kappa_1,\kappa_2$ be two cardinals such that $\dpr(b/A) < \kappa_1$ and $\dpr(a/Ab)< \kappa_2$, then \[\dpr(a,b/A)< \kappa_1+\kappa_2-1.\]
\end{itemize}

The first bullet is relatively straightforward. The second one is from \cite{KOU}, as well as the third, which follows from it. Proofs can be also found in \cite[Section 4]{NIPbook}.

\section{Indiscernible sequences stable over a set}

Recall that we assume $T$ to be NIP.

In \cite{distal} we introduced the idea that there are two minimal ways in which an indiscernible sequence $I$ can fail to be indiscernible over a tuple $a$: either some formula $\phi(x;a)$ changes truth value at one cut of the sequence $I$, or there is an element $b\in I$ such that some formula takes a different truth value on $b$, but removing $b$ from $I$ yields an indiscernible sequence over $a$. Distal theories are exactly those for which the second behavior never happens. The following fact from \cite{distal} says that this second behavior cannot happen on a large subset of $I$.

\begin{fact}[\cite{distal}, Theorem 3.30, Corollary 3.32]\label{fact:finite cofinite}
	Let $I_1+I_2+I_3$ be indiscernible, with $I_1,I_3$ infinite without endpoints. Write $I_2=(b_i:i\in \mathcal I)$. Assume that $I_1+I_3$ is indiscernible over $A$, then:
	\begin{enumerate}
		\item if $\phi(x;a)\in L(A)$, then $\{i\in \mathcal I : \models \phi(b_i;a)\}$ is finite or co-finite in $\mathcal I$;
		\item there is $\mathcal J\subseteq \mathcal I$ of size $\leq |A|+|T|$ such that $I_1+(a_i:i\in \mathcal I \setminus \mathcal J)+I_3$ is indiscernible over $A$.
	\end{enumerate}
\end{fact}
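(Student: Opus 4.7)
My plan is to combine the shrinking of indiscernibles (Proposition \ref{shrinking1}) with the hypothesis that $I_1 + I_3$ is indiscernible over $A$. For (1), I would apply Proposition \ref{shrinking1} to the indiscernible sequence $I_1+I_2+I_3$ and the formula $\phi(x;a) \in L(A)$, producing a finite convex partition of its index set on which $\phi(b_i;a)$ has constant truth value. The $A$-indiscernibility of $I_1+I_3$ forces $\phi(x;a)$ to take a common value, call it $\epsilon$, on every element of $I_1 \cup I_3$; convexity of the classes then places $I_1$ inside the leftmost $\epsilon$-valued class $C_L$ and $I_3$ inside the rightmost $\epsilon$-valued class $C_R$, with alternating ``intermediate'' classes $C_1, \ldots, C_m$ of constant $\phi$-value sitting strictly inside $\mathcal{I}$.

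Showing that $S := \{i \in \mathcal{I} : \models \phi(b_i;a)\}$ is finite or cofinite then amounts to ruling out the simultaneous existence of infinite convex subsets $C \subseteq S$ and $D \subseteq \mathcal{I} \setminus S$, and this is where the main obstacle lies. The plan is to exploit the common limit type $p := \lim(I_1)|_A = \lim(\op(I_3))|_A$: a Morley sequence $M$ of a global extension of $p$ over $A I_1 I_2 I_3$ fills the cut of $I_1+I_3$ so that $I_1+M+I_3$ remains indiscernible over $A$, and by construction $\phi(x;a)$ has value $\epsilon$ throughout $M$. Assuming $C$ and $D$ are both infinite, the idea is to interleave $M$-elements (value $\epsilon$) with elements of $C$ (value $1-\epsilon$) through iterated Morley constructions in order to build an indiscernible sequence on which $\phi$ alternates unboundedly, thereby contradicting the finite VC-dimension of $\phi(x;y)$.

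For (2), the plan is to apply the same strategy to every $A$-formula. For each $\phi(y_0,\ldots,y_{n-1}; \bar d) \in L(A)$, the multi-variable version of Proposition \ref{shrinking1} together with the argument for (1) produces a finite exceptional set $\mathcal{J}_\phi \subseteq \mathcal{I}$ whose removal restores $A$-indiscernibility for $\phi$. Setting $\mathcal{J} := \bigcup_{\phi \in L(A)} \mathcal{J}_\phi$, we obtain $|\mathcal{J}| \leq |L(A)| = |A|+|T|$ since each $\mathcal{J}_\phi$ is finite. Finally, one verifies that $I_1 + (b_i : i \in \mathcal{I} \setminus \mathcal{J}) + I_3$ is indiscernible over $A$ by checking each $A$-formula in turn, using the defining property of its corresponding $\mathcal{J}_\phi$ and the fact that $\mathcal{J}_\phi \subseteq \mathcal{J}$.
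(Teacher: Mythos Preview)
The paper does not prove this statement at all: it is quoted as a fact from \cite{distal} (Theorem 3.30 and Corollary 3.32 there), so there is no in-paper argument to compare your sketch against. What I can do is assess whether your outline actually proves the result.

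Your setup for (1) is fine: shrinking of indiscernibles gives a finite convex decomposition, and $A$-indiscernibility of $I_1+I_3$ forces a common value $\epsilon$ on $I_1\cup I_3$. The gap is in the step you flag as the ``main obstacle''. You build a Morley sequence $M$ of a global heir of $\lim(I_1)|_A$ over $A I_1 I_2 I_3$, observe that $\phi$ has value $\epsilon$ on $M$, and then propose to ``interleave'' $M$ with an infinite block of $I_2$ on which $\phi$ has the opposite value. But $M$ is constructed \emph{over} $I_2$: in the indiscernibility order it sits entirely to one side of $I_2$, so $I_1+M+I_2+I_3$ is indiscernible with $M$ \emph{preceding} the $\neg\epsilon$-block, not alternating with it. ``Iterated Morley constructions'' does not explain how to thread $\epsilon$-valued points \emph{between} elements of the $\neg\epsilon$-block while keeping the whole sequence indiscernible, and I do not see any way to do this using only limit types and shrinking. (There is also a small slip: you write ``elements of $C$ (value $1-\epsilon$)'', but by your own definitions $C\subseteq S$ carries the value $\epsilon$; you presumably mean $D$.) The proof in \cite{distal} does not proceed by such an interleaving; it relies on the machinery developed earlier in that paper --- in particular a lemma (cited in the present paper as \cite[Lemma~2.8]{distal}) allowing one to insert an element into the cut of an $A$-indiscernible sequence while preserving $A$-indiscernibility. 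That lemma is the real content, and your sketch does not supply a substitute for it.

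For (2), the global strategy --- remove a finite exceptional set for each $L(A)$-formula and take the union --- is exactly how Corollary 3.32 is deduced from Theorem 3.30 in \cite{distal}. However, your passage from (1) to the existence of a finite $\mathcal J_\phi$ for an $n$-ary $\phi$ is too quick. Statement (1) as written only says one of the two truth-value sets is finite; to get a finite exceptional set whose removal makes $I_1+I_2'+I_3$ actually $\phi$-indiscernible over $A$, you need the stronger conclusion that all intermediate $\sim_\phi$-classes are finite (equivalently, that the finite set is always the one carrying the value $\neg\epsilon$). This does follow, but it requires applying (1) over enlarged bases $A\cup\bar e$ with $\bar e$ chosen from $I_1\cup I_3$, after suitably trimming $I_1$ and $I_3$; you should make this reduction explicit rather than invoking an unspecified ``multi-variable version''.
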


\begin{prop}\label{prop:stable seq}
Let $I$ be an endless densely ordered indiscernible sequence and $A$ a set of parameters. The following are equivalent:

\begin{enumerate}
\item there are infinite endless sequences $I_0,I_1$ such that $I_0+I+I_1$ is indiscernible and $I_0+I_1$ is indiscernible over $A$;
\item if $I_0$ is a Morley sequence in $\lim(\op(I)/IA)$ and $\op({I_1})$ a Morley sequence in $\lim(I/II_0A)$ (equivalently, $I\unlhd_A I_0+I+I_1$), then $I_0+I_1$ is indiscernible over $A$;
\item if $I\unlhd_A J$, then $J\setminus I$ is indiscernible over $A$;
\item for every formula $\phi(x_1,\ldots,x_n)\in L(A)$, there is a finite set $I_\phi \subseteq I$ and a truth value $\mathbf t$ such that for every $a_1<\cdots <a_n$ in $I\setminus I_\phi$, $\models \phi^{\mathbf t}(a_1,\ldots,a_n)$.
\end{enumerate}

\end{prop}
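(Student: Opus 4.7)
My plan is to close the loop $(2)\Rightarrow(1)\Rightarrow(4)\Rightarrow(3)\Rightarrow(2)$. The implication $(2)\Rightarrow(1)$ is immediate: take the $I_0,I_1$ provided by (2). For $(3)\Rightarrow(2)$, the canonical $I_0,I_1$ of (2) are exactly those for which $I\unlhd_A I_0+I+I_1$, by the second bulleted special case following Proposition~\ref{shrinking1}; applying (3) with $J=I_0+I+I_1$ yields that $J\setminus I = I_0+I_1$ (in its induced ordering) is $A$-indiscernible.

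For $(4)\Rightarrow(3)$, assume $I\unlhd_A J$ and fix $\phi(\bar x)\in L(A)$; let $\mathbf t$ and $I_\phi\subseteq I$ be the truth value and finite exception set provided by (4). I show $\phi^{\mathbf t}(\bar b)$ for every increasing $\bar b$ in $J\setminus I$. Using the explicit construction of $J$ recalled at the end of \S2.2 (a Morley sequence $J_\mathfrak c$ of the appropriate limit type inserted at each cut $\mathfrak c$ of $I$), each entry of $\bar b$ lies in some $J_\mathfrak c$ and realizes the corresponding limit type of $I$. Because $I_\phi$ is finite, the cofinal or coinitial segment of $I\setminus I_\phi$ on each side of each cut forces the limit type to contain all relevant partial evaluations of $\phi^{\mathbf t}$. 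Processing the coordinates of $\bar b$ iteratively, using that $\mathbf t$ propagates through Morley products by invariance of the limit types, then yields $\phi^{\mathbf t}(\bar b)$.

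The main step is $(1)\Rightarrow(4)$. Fix $\phi(x_1,\ldots,x_n;a)\in L(A)$; we want a finite $I_\phi\subseteq I$ and truth value $\mathbf t$. This is an $n$-variable generalization of Fact~\ref{fact:finite cofinite}(1), implicit in the proof of Fact~\ref{fact:finite cofinite}(2) (which proceeds formula by formula, with a finite contribution per formula before unioning over $L(A)$ to reach the $|A|+|T|$ bound). I would argue by induction on $n$: the base $n=1$ is Fact~\ref{fact:finite cofinite}(1) with $\mathbf t$ the cofinite value. For the inductive step, pick a late element $c\in I_1$ and consider $\phi(\bar x,c;a)\in L(A\cup\{c\})$; the (1)-hypothesis transfers to $A\cup\{c\}$ with the same middle $I$ and modified outer sequences $I_0,\,I_1^{<c}$, since $c$ sits at a fixed position inside the $A$-indiscernible sequence $I_0+I_1$, making $I_0+I_1^{<c}$ indiscernible over $A\cup\{c\}$. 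Induction yields a finite $\mathcal K_c\subseteq\mathcal I$ and truth value $\mathbf t_c$; $A$-indiscernibility of $I_1$ makes $\mathbf t_c$ constant in $c$, equal to some $\mathbf t$. The delicate final step is to transfer from ``last coordinate in $I_1$'' to ``last coordinate in $I$'': one cannot simply promote an element of $I$ to a parameter (this would break the (1)-hypothesis), so instead an NIP alternation argument combined with another application of Fact~\ref{fact:finite cofinite}(1) on an auxiliary 1-variable slice is required to uniformly control the $\mathcal K_c$'s and assemble a single finite $I_\phi$. This uniform-control step is the real technical content.
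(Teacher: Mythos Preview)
Your cycle $(2)\Rightarrow(1)\Rightarrow(4)\Rightarrow(3)\Rightarrow(2)$ matches the paper's $(4)\Rightarrow(3)\Rightarrow(2)\Rightarrow(1)\Rightarrow(4)$ up to starting point, and the paper likewise treats $(1)\Rightarrow(4)$ as the only substantive step, deferring it to \cite[Corollary 3.32]{distal}---exactly your reference to the proof of Fact~\ref{fact:finite cofinite}(2). Your additional inductive sketch for $(1)\Rightarrow(4)$ is incomplete at precisely the point you flag, and it is not clear the ``uniform-control step'' can be carried out as you indicate: the exception sets $\mathcal K_c$ for $c\in I_1$ are arbitrary finite subsets of $I$ with no a~priori uniformity, and no obvious one-variable slice makes Fact~\ref{fact:finite cofinite}(1) apply once a coordinate from $I$ is frozen. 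Since your citation already suffices, this is not a gap in the proof, only in the optional sketch.

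There is, however, a genuine gap in your $(4)\Rightarrow(3)$. You invoke the explicit Morley-sequence construction from the end of \S2.2, but that passage merely \emph{produces} some $J$ with $I\unlhd_A J$; it does not show that an arbitrary such $J$ arises this way, nor that elements of $J\setminus I$ realize the relevant limit types over $IA$ or over previously placed elements of $J\setminus I$ (in general they need not). Your iterative ``propagation through Morley products'' therefore does not apply. The correct argument works directly with the shrinking equivalence $\sim_\phi^J$: from $\textsf T(I,\phi)=\textsf T(J,\phi)$ one checks that every $\sim_\phi^J$-class meets $I$, and by density of $I$ and convexity of the classes it meets $I$ in an infinite set, hence meets $I\setminus I_\phi$. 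Any increasing tuple $\bar b$ from $J\setminus I$ can then be replaced, class by class, by an increasing tuple from $I\setminus I_\phi$, where (4) gives the value $\mathbf t$. This is presumably what the paper's terse ``limit types are unaffected by removing finitely many points'' is gesturing at.
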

\begin{proof}
The implications (4)$\to$(3)$\to$(2)$\to$(1) are straightforward: (4) implies (3) as limit types are unaffected by removing finitely many points from a sequence, (2) is a special case of (3) and (1) follows directly from (2). The implication (1)$\to$(4) is proved as in \cite[Corollary 3.32]{distal}.
\end{proof}

\begin{defi}
If the conditions of the proposition are satisfied, we say that $I$ is stable over $A$.

If $I$ is any infinite indiscernible sequence (not necessarily densely ordered), we say that it is stable over $A$ if condition (1) above is satisfied. This implies that (4) also holds. If $I$ has no endpoints, then (2) also holds.
\end{defi}

Note that if $I\subseteq J$ are two endless indiscernible sequences, $I$ coinitial and cofinal in $J$, then $I$ is stable over some set $A$ if and only if $J$ is stable over $A$. This follows from (2) above since the  limit types of $I$ and $J$ (from both sides) are the same.

Also, it follows from Fact \ref{fact:finite cofinite} that if $I$ is stable over $A$, then there is $I'\subseteq I$, $|I'|\leq |T|+|A|$ such that $I\setminus I'$ is indiscernible over $A$.

Finally, we note that this definition first appeared in \cite{GuinHill} under the name \emph{almost indiscernible sequence}. We use a different terminology, mainly because the next definition of mutually stable sequences does not coincide with almost mutually indiscernible sequences from \cite{GuinHill}.

\begin{prop}\label{prop:mutually stable}
Let $(I_i:i<\alpha)$ be a family of endless densely ordered indiscernible sequences and $A$ a set of parameters. The following are equivalent:

\begin{enumerate}
\item there are infinite endless sequences $J_i^0,J_i^1$, $i<\alpha$, such that the sequences $J_i^0+I_i+J_i^1$ are indiscernible and $J_i^0+J_i^1$ are mutually indiscernible over $A$;
\item if for each $i<\alpha$, $J_i^0$ realizes $\lim(\op(I_i)/I_{<\alpha}AJ_{<i}^0 J_{<i}^1)$ and $\op({J_i^1})$ realizes $\lim(I/I_{<\alpha}AJ_{\leq i}^0 J_{<i}^1)$, then the sequences $(J_i^0+J_i^1:i<\alpha)$ are mutually indiscernible over $A$;
\item if we construct inductively $I_i\unlhd_{AI_{<\alpha}J_{<i}} J_i$, then the sequences $(J_i\setminus I_i:i<\alpha)$ are mutually indiscernible over $A$;
\item for every formula $\phi(x^1_1,..,x^1_{n_1};x^2_1,..,x^2_{n_2};\ldots;x^{k}_1,..,x^k_{n_k})\in L(A)$ and indices $i_1>\cdots>i_k$ there is a truth value $\mathbf t$, a finite set $I^{1} \subseteq I_{i_1}$ such that for any $a^1_1<\cdots<a^1_{n_1} \in I_{i_1} \setminus I^1$, there is a finite set $I^2 \subseteq I_{i_2}$ such that for any $a^2_1<\cdots<a^2_{n_2}\in I_{i_2}\setminus I^2$, there is a finite set $I^3\subseteq I_{i_3}$ such that $\ldots\ldots$ for any $a^k_1<\cdots <a^k_{n_k}\in I_{i_k}\setminus I^k$, we have $\models \phi^{\mathbf t}(a^1_1,..,a^1_{n_1};a^2_1,..,a^2_{n_2};\ldots;a^{k}_1,..,a^k_{n_k})$.

\end{enumerate}

\end{prop}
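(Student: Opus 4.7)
The plan is to follow the proof of the preceding single-sequence proposition. The implications $(4)\to(3)\to(2)\to(1)$ generalize without difficulty: $(4)\to(3)$ relies on the fact that limit types are insensitive to removing finitely many points, so the extensions $J_i$ in (3) (whose new elements realize Morley sequences of limit types of cuts of $I_i$) interact in the right way with (4)'s uniform truth values; $(3)\to(2)$ is by taking the canonical stepwise limit-type construction described in (2); $(2)\to(1)$ is direct.

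The main content is $(1)\to(4)$, which I would prove by induction on $k$. The base case $k=1$ follows from the preceding single-sequence proposition applied to $I_{i_1}$, which is stable over $A$ via the witnesses $J_{i_1}^0, J_{i_1}^1$.

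For the inductive step, fix $\phi \in L(A)$ with $k$ blocks, indices $i_1 > \cdots > i_k$, and witnesses $(J_i^0, J_i^1 : i < \alpha)$ to (1). The plan reduces to the following key statement: there exists a finite $I^1 \subseteq I_{i_1}$ such that for every increasing $(a^1_1, \ldots, a^1_{n_1}) \in (I_{i_1} \setminus I^1)^{n_1}$, the subfamily $(I_i : i \neq i_1)$ satisfies (1) of this proposition over $A \cup \{a^1_1, \ldots, a^1_{n_1}\}$, with $(J_i^0, J_i^1 : i \neq i_1)$ continuing to serve as witnesses. Granted this, the induction hypothesis applied to the subfamily with parameter set $A \cup \{a^1_1, \ldots, a^1_{n_1}\}$ and the $(k-1)$-block formula $\phi(a^1_1, \ldots, a^1_{n_1}; x^2_1, \ldots, x^2_{n_2}; \ldots; x^k_1, \ldots, x^k_{n_k})$ furnishes the nested exceptions $I^2, \ldots, I^k$ required by (4).

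The main obstacle is proving the key statement. Since each $J_i^0 + I_i + J_i^1$ for $i \neq i_1$ remains indiscernible regardless of the $a^1_j$'s, what one must verify is that $(J_i^0 + J_i^1 : i \neq i_1)$ remains mutually indiscernible over $A \cup \{a^1_1, \ldots, a^1_{n_1}\}$ for all but finitely many choices. This family is mutually indiscernible over $A \cup (J_{i_1}^0 \cup J_{i_1}^1)$ as a direct consequence of (1), and $I_{i_1}$ is stable over $A \cup \bigcup_{j \neq i_1}(J_j^0 \cup J_j^1)$ via $J_{i_1}^0, J_{i_1}^1$. Applying Fact 3.2 formula by formula produces, for each individual formula, a finite exception set on $I_{i_1}$ outside which the tuple $(a^1_1, \ldots, a^1_{n_1})$ plays the same role as a tuple drawn from $J_{i_1}^0 \cup J_{i_1}^1$ in the verification of indiscernibility. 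The delicate point is combining these formula-wise finite exceptions into a single finite $I^1$ that works uniformly; I would handle this via Proposition 2.10 (shrinking of indiscernibles), whose uniform NIP bound on the number of convex equivalence classes reduces the apparently infinite family of relevant formulas to a controlled finite amount of data.
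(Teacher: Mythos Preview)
Your treatment of $(4)\Rightarrow(3)\Rightarrow(2)\Rightarrow(1)$ is fine and matches the paper. The problem is your key statement in the inductive step for $(1)\Rightarrow(4)$: you ask for a \emph{finite} $I^1\subseteq I_{i_1}$ such that for every $\bar a^1\in (I_{i_1}\setminus I^1)^{n_1}$ the witnesses $(J_i^0,J_i^1:i\neq i_1)$ still certify condition~(1) over $A\cup\bar a^1$. Certifying (1) means $(J_i^0+J_i^1:i\neq i_1)$ is mutually indiscernible over $A\cup\bar a^1$, i.e.\ $\bar a^1$ behaves like a tuple from $J_{i_1}^0$ with respect to \emph{every} formula and \emph{every} parameter tuple $\bar d$ drawn from the other $J_j$'s. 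Fact~3.1(1) gives a finite exception set in $I_{i_1}$ for each fixed pair (formula, $\bar d$), but there are infinitely many $\bar d$, and the union of those finite sets is only bounded by Fact~3.1(2), which gives size $\leq |T|+|A|+|\bigcup_{j\neq i_1}J_j^0J_j^1|$, not finite. Shrinking of indiscernibles does not close this gap: for a fixed formula it bounds the number of convex pieces that a \emph{single} parameter $\bar d$ cuts in $J_{i_1}^0+I_{i_1}+J_{i_1}^1$, but says nothing about how the exceptional points in $I_{i_1}$ drift as $\bar d$ ranges over the infinite sequences $J_j^0,J_j^1$. So the key statement, as you phrased it, is in general false, and your induction does not get off the ground.

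The paper sidesteps this by arguing $(1)\Rightarrow(4)$ by contradiction together with a compactness blow-up. If (4) fails for some $\phi$, one obtains an \emph{infinite} set of bad outer tuples in $I_{i_1}$, each carrying an infinite set of bad tuples in $I_{i_2}$, and so on down. By compactness one first enlarges the $I_i$'s (preserving (1) with the same $J_i^0,J_i^1$) so that these nested bad sets all have size $\geq |T|^+$. Now the $\leq |T|$-sized exception set from Fact~3.1(2) cannot swallow the bad set: one finds $a\in I_{i_1}$ bad with $J_{i_1}^0+(a)+J_{i_1}^1$ indiscernible over $J_{i_2}^0\cup J_{i_2}^1$, then repeats inside $I_{i_2}$ over the enlarged base, and so on, producing a tuple on which $\phi$ is forced to take both truth values. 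The compactness step is precisely what converts the infinite-but-not-cofinite control of Fact~3.1(2) into the cofinite control that (4) demands; your inductive scheme lacks any substitute for it.
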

\begin{proof}
The implications (4)$\to$(3)$\to$(2)$\to$(1) are as above. We show (1)$\to$(4). Assume (1) and take the sequences $J_ i^0,J_i^1$ to be countable. Assume that (4) fails, as witnessed by some formula $\phi$ and indices $i_1>\cdots>i_k$. For simplicity of notations, assume $k=2$, $(i_1,i_2)=(1,0)$ and $\phi=\phi(x^1,x^0)$ (we changed the variable name from $x^2$ to $x^0$ to improve readability). The general case is similar. Let $\mathbf t$ be the truth value of $\phi(a^1,a^0)$ for some/any $a^1\in J_1^0, a^0\in J_0^0$. Then we can find an infinite $I^1\subseteq I_1$ such that for all $a\in I^1$, there is an infinite $I^0_a\subseteq I_0$ such that for all $b\in I^0_a$, we have \[\neg \phi^{\mathbf t}(a,b).\]
By compactness, we can increase the sequences $I_1$ and $I_0$ and assume that $I^1$ and each $I^0_a$ have size $\geq |T|^+$. As the sequence $J_1^0+J_1^1$ is indiscernible over $J_0^0+J_0^1$ and the latter is countable, by Fact \ref{fact:finite cofinite}, there is a subset $I^1_*\subseteq I_1$ of size $\leq |T|$ such that $J_1^0+(I_1\setminus I^1_*)+J_1^1$ is indiscernible over $J_0^0+J_0^1$. Let $a\in  (I^1\setminus I^1_*)$. Then $J_1^0+(a)+J_1^1$ is indiscernible over $J_0^0+J_0^1$, and hence those two sequences are mutually indiscernible. Next, we can similarly find $I^0_*\subseteq I_0$ of size $\leq |T|$ such that $J_0^0+(I_0\setminus I^0_*)+J_0^1$ is indiscernible over $J_1^0+(a)+J_1^1$. So there is some $b\in I^0_a\setminus I_*^0$. Then the sequences $J_1^0+(a)+J_1^1$ and $J_0^0+(b)+J_0^1$ are mutually indiscernible. But this contradicts the construction of $I^0_a$.
\end{proof}

When the conditions in the last proposition are satisfied, we say that the family $(I_i:i<\alpha)$ is \emph{mutually stable} over $A$. Condition (1) shows that this notion does not depend on the ordering of the family. As previously, we extend this definition to arbitrary indexing orders using condition (1). This implies that condition (4) holds and, if the sequences $(I_i)$ are endless, condition (2) holds.

The following lemmas will be used repeatedly.

\begin{lemme}\label{lem:pound}
Let $I_1,\ldots,I_{m}$ be infinite sequences, mutually indiscernible over some $A$. Assume that the sequence $I_1+\cdots+I_{m}$ is indiscernible and stable over $A$. Then $I_1+\cdots+I_{m}$ is indiscernible over $A$.
\end{lemme}
\begin{proof}
Fix a formula $\phi(x_1,\ldots,x_n)$ over $A$. By Proposition \ref{prop:stable seq}(4), there is a finite set $I_\phi \subseteq I_1+\cdots+I_{m}$ and a truth value $\mathbf t$ such that for every $a_1<\cdots <a_n$ in $I_1+\cdots+I_{m}\setminus I_\phi$, $\models \phi^{\mathbf t}(a_1,\ldots,a_n)$. Now take an arbitrary $a_1,\ldots,a_n \in I_1+\cdots +I_m$. Since the sequences $I_1,\ldots,I_{m}$ are infinite and mutually indiscernible over $A$, we can find $a'_1,\ldots,a'_n\in I_1+\cdots +I_m$ having the same type as $a_1,\ldots,a_n$ over $A$ and disjoint from $I_\phi$. We know that $\phi^{\mathbf t}(a'_1,\ldots,a'_n)$ holds, so also $\phi^{\mathbf t}(a_1,\ldots,a_n)$ is true. Hence the sequence $I_1+\cdots+I_m$ is indiscernible over $A$.
\end{proof}

\begin{lemme}\label{lem:poundpound}
Let $(J_i:i<\alpha)$ be sequences, mutually stable over some $A$, where each $J_i$ can be written as $J_i = I_i^1+\cdots +I_i^k$ and the sequences $(I_i^j: i<\alpha, j\leq k)$ are infinite and mutually indiscernible over $A$. Then the sequences $(J_i:i<\alpha)$ are mutually indiscernible over $A$.
\end{lemme}
\begin{proof}
The proof is the same as that of the previous lemma using Proposition \ref{prop:mutually stable}(4) instead of Proposition \ref{prop:stable seq}(4).
\end{proof}

\section{Op-dimension}

Op-dimension was introduced by Guingona and Hill in \cite{GuinHill}. It measures the number of independent orderings that one can define on an infinite subset of a (partial) type. We give a self-contained exposition, slightly different from, but equivalent to, the one in \cite{GuinHill} (in the case of finite cardinals, see the remarks below).

\begin{definition}
	Let $A$ be any set of parameters, $\pi$ a partial type over $A$ and $\kappa$ a cardinal. We say that $\dlr(\pi,A)<\kappa$ if we cannot find:

	$\bullet$ $a\models \pi$;

	$\bullet$ a family $(I_i:i<\kappa)$ of sequences mutually indiscernible over $A$, where $I_i=(b^ i_j:j\in \ordI_i)$;
		
	$\bullet$ for each $i<\kappa$, a formula $\phi_i(x;y_i)\in L(A)$ (with $|x|=|a|$ and $|y_i|=|b^ i_j|$), such that $\{j\in \ordI_i:\models \phi_i(a;b^ i_j)\}$ is infinite and co-infinite in $\ordI_i$.
\end{definition}

It follows at once from the definition that $\dlr(\pi, A)\leq \dpr(\pi , A)$. In particular, if $T$ is NIP and $\pi$ is a partial type in finitely many variables, then $\dlr(\pi,A)<|T|^+$.

\begin{lemma}\label{lem:base change distal}
	If $A\subseteq B$ and $\pi(x)$ is a partial type over $A$, then $\dlr(\pi,A)=\dlr(\pi,B)$.
\end{lemma}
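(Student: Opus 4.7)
We prove the two inequalities separately. For $\dlr(\pi,B)\leq\dlr(\pi,A)$ the plan is straightforward: absorb the $B$-parameters into the sequences. Given a witness to $\dlr(\pi,B)\geq\kappa$ --- that is, $a\models\pi$, a family $(I_i:i<\kappa)$ mutually indiscernible over $B$, and formulas $\phi_i(x;y_i)\in L(B)$ splitting $I_i$ infinitely and co-infinitely --- write $\phi_i(x;y_i)=\psi_i(x;y_i,d_i)$ with $\psi_i\in L(A)$ and $d_i\in B$, and augment each sequence as $I_i':=(b_j^i{}^\frown d_i:j\in\mathcal{I}_i)$. The family $(I_i':i<\kappa)$ is mutually indiscernible over $A$, because for each $i$ one has $A\cup I_{\neq i}'\cup\{d_i\}\subseteq B\cup I_{\neq i}$, over which $I_i$ is already indiscernible. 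The $L(A)$-formulas $\psi_i(x;y_i,z_i)$ now witness the same infinite/co-infinite splitting of $I_i'$ by $a$, yielding $\dlr(\pi,A)\geq\kappa$.

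For $\dlr(\pi,A)\leq\dlr(\pi,B)$ the difficulty runs the other way: a witness over $A$ only guarantees mutual $A$-indiscernibility, whereas we need mutual $B$-indiscernibility. The plan is to extract. First, enlarge each $I_i$ (by compactness together with mutual $A$-indiscernibility) to a very long densely ordered indiscernible sequence while preserving the full configuration with $a$. Then apply a Ramsey/Erd\H{o}s-Rado-style extraction argument simultaneously across all $\kappa$ sequences to obtain a new family $(J_i:i<\kappa)$ mutually indiscernible over $B$, together with $a'\models\pi$, satisfying the matching property: for every $L(A)$-formula $\chi(x;\bar y)$ and every increasing finite tuple $\bar t$ drawn from specified positions inside finitely many of the $J_i$, the value of $\chi(a';\bar t)$ is realized as $\chi(a;\bar s)$ for some correspondingly arranged tuple $\bar s$ from the original family.

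Preservation of the splitting is then immediate: the statement that $\{j\in\mathcal{I}_i:\models\phi_i(a;b_j^i)\}$ is infinite unpacks into the schema of $L(A)$-statements $(\exists j_1<\cdots<j_n)\,\bigwedge_{l\leq n}\phi_i(a;b_{j_l}^i)$, one for each $n$; by the matching property, each of these also holds with $a'$ in place of $a$ and the $J_i$ in place of the $I_i$, so $\phi_i(a';-)$ has infinitely many realizations in $J_i$. The symmetric argument applied to $\neg\phi_i$ gives co-infiniteness. Thus $(a',(J_i),(\phi_i))$ witnesses $\dlr(\pi,B)\geq\kappa$.

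The substantive step is the extraction in the second direction: one must simultaneously arrange mutual (not merely individual) $B$-indiscernibility across $\kappa$ sequences while controlling the $L(A)$-type of all configurations with the new realization $a'$. This is a standard compactness-plus-Ramsey procedure, but it is where the content of the lemma sits; the rest is routine bookkeeping.
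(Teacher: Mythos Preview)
Your first direction (absorbing the $B$-parameters into the sequences to pass from a $B$-witness to an $A$-witness) is correct and is essentially the paper's argument.

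For the second direction the paper takes a much shorter route than you do. Rather than extracting new sequences $(J_i)$ over $B$ and then hunting for a compatible $a'$, the paper leaves the entire witness $(a,(I_i),(\phi_i))$ untouched and instead moves $B$: since the $(I_i)$ are mutually indiscernible over $A$, a standard Ramsey/compactness argument produces $B'\equiv_A B$ such that the $(I_i)$ are mutually indiscernible over $B'$. The original data is then immediately a witness to $\dlr(\pi,B')\geq\kappa$ (the formulas $\phi_i$ lie in $L(A)\subseteq L(B')$), and conjugating $B'$ to $B$ over $A$ gives $\dlr(\pi,B)\geq\kappa$. This sidesteps entirely the problem of reconstructing $a'$.

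Your approach can in principle be made to work, but as written there is a slip in the direction of the matching property. You state it as: each configuration $\chi(a';\bar t)$ with $\bar t$ from the $J_i$ is realized as some $\chi(a;\bar s)$ with $\bar s$ from the $I_i$ --- information flows from $(a',J)$ back to $(a,I)$. But your splitting-preservation paragraph uses the opposite direction: from ``$\phi_i(a;-)$ has $n$ increasing witnesses in $I_i$'' you want to deduce the same for $\phi_i(a';-)$ in $J_i$. That inference is not licensed by the matching property as you wrote it. This is fixable (one must set up the extraction so that the EM-type of the new configuration over $A$ genuinely reflects that of the old one, including the positions where $\phi_i$ holds and fails), but once the details are filled in you have essentially reproduced the conjugate-$B$ trick in a more laborious form.
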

\begin{proof}
	If $a$, $(I_i:i<\kappa)$ is a witness to $\dlr(\pi,B)\geq \kappa$, then $a$, $(I'_i:i<\kappa)$ witnesses $\dlr(\pi,A)\geq \kappa$, where $I'_i$ is obtained from $I_i$ by concatenating a fixed enumeration of $B$ to the end of every element of it.

Conversely, if $(I_i:i<\kappa)$ are $A$-mutually indiscernible and witness $\dlr(\pi,A)\geq \kappa$, then there is $B'\equiv_A B$ such that those sequences are mutually indiscernible over $B'$. We then have $\dlr(\pi,B)=\dlr(\pi,B')\geq \kappa$. Hence $\dlr(\pi,B)\geq \dlr(\pi,A)$.
\end{proof}

Hence it makes sense to write $\dlr(\pi)$ to stand for $\dlr(\pi,A)$ for any $A$ over which $\pi$ is defined.

This definition of op-dimension does not appear in this form in \cite{GuinHill}, but it is easily seen by Ramsey compactness to be equivalent to the one using IRD-patterns given as Lemma 1.23 in that paper. (The only difference is that we allow it to take infinite cardinal values, whereas the definition as stated in \cite{GuinHill} only allows integers or $\infty$.)

\begin{prop}\label{prop:dlrank}
	Let $\pi(x)$ be a partial type and $\kappa$ a cardinal. The following are equivalent:
	\begin{enumerate}
		\item $\dlr(\pi)<\kappa$;
		\item for every $A$ over which $\pi$ is defined, for every family $(I_i:i<\kappa)$ of endless sequences mutually stable over $A$ and $a\models \pi$, there is $i<\kappa$ such that $I_i$ is stable over $Aa$;
		\item for every $A$ over which $\pi$ is defined, for every family $(I_i:i<\lambda)$ of endless sequences mutually stable over $A$ and $a\models \pi$, there is $X\subseteq \lambda$, $|X|<\kappa$ such that $(I_i:i\in \lambda\setminus X)$ are mutually stable over $Aa$.
	\end{enumerate}
\end{prop}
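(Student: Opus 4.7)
I prove the cycle $(3)\Rightarrow(2)\Rightarrow(1)\Rightarrow(3)$.

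$(3)\Rightarrow(2)$ is immediate: setting $\lambda=\kappa$, any $i\in\kappa\setminus X$ is individually stable over $Aa$, since restricting the witnesses of mutual $Aa$-stability to the single index $i$ gives condition~(1) of single-sequence stability.

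For $(2)\Rightarrow(1)$, suppose $\dlr(\pi)\geq\kappa$, witnessed by a mutually $A$-indiscernible family $(I_i:i<\kappa)$, an element $a\models\pi$, and formulas $\phi_i(x;y)\in L(A)$ with $\{j:\models\phi_i(a;b^i_j)\}$ infinite and coinfinite in $\ordI_i$. After extending each $I_i$ to a densely ordered indiscernible sequence (which preserves mutual $A$-indiscernibility), the family is mutually stable over $A$ via condition~(1) of Proposition~\ref{prop:mutually stable} with $J_i^0=J_i^1=\emptyset$. By NIP, $\phi_i(a;y)$ has bounded alternation on $I_i$; being neither cofinitely true nor cofinitely false, it must flip at a Dedekind cut of $I_i$ with infinite sides. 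Hence the single-variable $L(Aa)$-formula $\phi_i(a;y)$ violates condition~(4) of single-sequence stability of $I_i$ over $Aa$, so no $I_i$ is $Aa$-stable, contradicting (2).

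For $(1)\Rightarrow(3)$, assume $\dlr(\pi)<\kappa$ and let $(I_i:i<\lambda)$ be mutually stable over $A$ with $a\models\pi$. The central claim is
\[ X:=\{i<\lambda : I_i \text{ is not stable over } Aa\}, \qquad |X|<\kappa. \]
Assume for contradiction $|X|\geq\kappa$. For each $i\in X$, condition~(4) of single-sequence stability of $I_i$ over $Aa$ fails, so some $\theta_i(\bar y;a)\in L(Aa)$ is not eventually constant on increasing tuples from $I_i$. Since condition~(4) of mutual stability (applied with $k=1$) implies each $I_i$ is stable over $A$, after deleting finitely many points we may assume $I_i$ is $A$-indiscernible. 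Apply Shrinking of Indiscernibles (Proposition~\ref{shrinking1}) to $\theta_i$ over $A$: the coarsest convex equivalence $\sim_{\theta_i}$ on $I_i$ has finitely many classes, and the failure of eventual constancy forces two adjacent infinite classes $C_1<C_2$ carrying different $\theta_i$-values. In NIP the transition across this cut is ultimately witnessed by a $1$-variable $L(A)$-formula $\phi_i(x;y)$ whose evaluation at $a$ splits $I_i$ into an infinite and coinfinite set. Trimming finitely many exceptional points from each $I_i$ in $X$ preserves both the mutual $A$-indiscernibility of the subfamily and the infinite/coinfinite splits, producing a $\dlr$-pattern of width $|X|\geq\kappa$ and contradicting the hypothesis.

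Granted $|X|<\kappa$, for each $i\notin X$ pick witnesses $\tilde J_i^0,\tilde J_i^1$ to the stability of $I_i$ over $Aa$ (condition~(1) of single-sequence stability). A simultaneous construction leveraging the original $A$-mutual-stability data, Shrinking of Indiscernibles, and Fact~\ref{fact:finite cofinite} (the finite/cofinite alternation bound for distal-style defects) arranges these witnesses so that $(\tilde J_i^0+\tilde J_i^1 : i\notin X)$ is mutually indiscernible over $Aa$; this gives mutual $Aa$-stability of $(I_i:i\notin X)$ via condition~(1) of Proposition~\ref{prop:mutually stable}, establishing (3). The principal obstacle is the extraction of a $1$-variable $L(A)$-witness inside the bound on $|X|$: a naive application of Shrinking fixes the remaining $n-1$ coordinates of $\theta_i$ at elements of $I_i$, producing a formula with parameters in $I_i\setminus A$ rather than in $L(A)$; NIP-alternation bounds together with the $A$-indiscernibility of $I_i$ must be used to replace those sequence parameters by ones in $A\cup\{a\}$, for instance by considering the generic $1$-type over $Aa$ of an element crossing the boundary between $C_1$ and $C_2$.
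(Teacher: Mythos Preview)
Your $(3)\Rightarrow(2)$ and $(2)\Rightarrow(1)$ are fine (the ``flip at a Dedekind cut'' remark is inaccurate but harmless: for a single-variable formula, infinite-coinfinite directly violates condition~(4), so no convexity is needed). The serious problems are in $(1)\Rightarrow(3)$.

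\textbf{First gap: the bound $|X|<\kappa$.} You try to contradict $\dlr(\pi)<\kappa$ by producing, for each $i\in X$, a formula $\phi_i(x;y)\in L(A)$ with $\phi_i(a;\cdot)$ infinite-coinfinite on $I_i$, and then packaging $(I_i:i\in X)$ into a $\dlr$-pattern. But the definition of $\dlr$ requires the sequences to be \emph{mutually indiscernible} over the base, whereas your hypothesis only gives mutual \emph{stability} over $A$. Deleting finitely many points from each $I_i$ makes each one individually $A$-indiscernible, but does nothing for mutual indiscernibility of the family, so you do not obtain a witness to $\dlr(\pi)\geq\kappa$. You also concede that extracting a one-variable $L(A)$-formula from the failure of stability is ``the principal obstacle'' and offer only a sketch; this is a genuine difficulty you have not resolved.

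\textbf{Second gap: mutual stability of the complement.} The sentence ``A simultaneous construction leveraging \ldots\ arranges these witnesses so that $(\tilde J_i^0+\tilde J_i^1:i\notin X)$ is mutually indiscernible over $Aa$'' is not a proof. Individual stability of each $I_i$ ($i\notin X$) over $Aa$ does not by itself yield mutual stability of the family over $Aa$; one needs coherent witnesses, and you have not explained how to produce them.

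The paper avoids both problems at once by a single device: build $I_i\unlhd J_i^0+J_i^1+I_i+J_i^2+J_i^3$ inductively over everything (including $a$). Mutual stability over $A$ forces $(J_i^0+J_i^1+J_i^2+J_i^3:i<\lambda)$ to be mutually \emph{indiscernible} over $A$, while the $\unlhd$-over-$a$ construction makes the pieces $(J_i^1,J_i^2:i<\lambda)$ mutually indiscernible over $Aa$. Enlarging the base to $A'=A\cup\{J_i^0,J_i^3:i<\lambda\}$, the sequences $(J_i^1+J_i^2)$ are mutually $A'$-indiscernible, so the \emph{definition} of $\dlr(\pi,A')<\kappa$ applies directly and hands you $X$ with $|X|<\kappa$. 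For $i\notin X$ the full concatenation $J_i^0+J_i^1+J_i^2+J_i^3$ is then indiscernible over $Aa\cup\{J_j^0,J_j^3:j\neq i\}$, which immediately gives mutual $Aa$-indiscernibility of $(J_i^0+J_i^3:i\notin X)$ and hence mutual $Aa$-stability of $(I_i:i\notin X)$. The key idea you are missing is this conversion of ``mutually stable'' into ``mutually indiscernible over a larger base'' via the $\unlhd$ construction; it simultaneously handles the extraction of the set $X$ and the coherence of the witnesses on its complement.
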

\begin{proof}
	$(1)\Rightarrow (2)$: Assume we are given a family $(I_i:i<\kappa)$ of endless sequences mutually stable over $A$ such that no $I_i$ is stable over $Aa$. Using the second bullet after Proposition \ref{shrinking1}, build inductively dense endless sequences $J_i$, so that \[I_i \unlhd J_i^0+I_i+J_i^1,\] over everything constructed so far. Then the sequences $(J_i^0+J_i^1:i<\kappa)$ are mutually indiscernible over $A$, but no $J_i^0+J_i^1$ is indiscernible over $Aa$. Since the sequences $(J_i^0,J_i^1:i<\kappa)$ are mutually indiscernible over $Aa$, we can find a formula $\phi(x;y)\in L(AJ_i^0 J_i^1)$ such that $\phi(a;b)$ holds for all $b$ in some end segment of $J_i^0$, whereas $\neg \phi(a;b')$ holds for all $b'$ in some initial segment of $J_i^1$. Adding the parameters of $\phi$ to the base and trimming the sequences gives a witness of $\dlr(\pi)\geq \kappa$.

	$(2)\Rightarrow (3)$: Assume that $\dlr(\pi,A)<\kappa$. Let $(I_i:i<\lambda)$ be mutually stable over $A$ and let $a\models \pi$. Using the second bullet after Proposition \ref{shrinking1}, build inductively dense endless sequences $J^k_i$, $i<\lambda$, $k<4$, so that \[I_i \unlhd J^0_i+J^1_i+I_i+J^2_i+J^3_i\] over everything constructed so far. Let $A'=AJ_{<\lambda}^0J_{<\lambda}^3$. The sequences $(J_i^1+J_i^2:i<\lambda)$ are mutually stable (indeed mutually indiscernible) over $A'$. Let $X\subseteq \lambda$ be the set of indices $i$ for which $J_i^1+J_i^2$ is not indiscernible over $A'a$. Since the sequences $(J_i^1,J_i^2:i<\lambda)$ are mutually indiscernible over $A'a$, Lemma \ref{lem:pound} implies that for $i\in X$, $J_i ^1 + J_i ^2$ is not stable over $A'a$. Hence by (2), $|X|<\kappa$. Now since $(J_i^0+J_i^1 , J_i^2+J_i^3 : i<\lambda)$ are mutually indiscernible over $Aa$, each sequence $J_i ^0+J_i ^1+J_i ^2+J_i ^3$ for $i\notin X$ is indiscernible over $AaJ_{\neq i}$. Hence the sequences $(J_i^0+J_i^1 , J_i^2+J_i^3 : i\in \lambda \setminus X)$ are mutually indiscernible over $Aa$ and the sequences $(I_i:i\in \lambda \setminus X)$ are mutually stable over $Aa$.

	
	$(3)\Rightarrow (1)$ is immediate.	
\end{proof}

The following is \cite[Theorem 2.2]{GuinHill}. The proof we give is very similar to the one by Guingona and Hill, except that we use mutually stable sequences instead of almost mutually indiscernible, which is a stronger notion.

\begin{prop}
	Let $A$ be any set of parameters and $a,b$ two tuples. If $\dlr(a/A)<\kappa_1$ and $\dlr(b/Aa)<\kappa_2$, then $\dlr(a,b/A)< \kappa_1 + \kappa_2 -1$.
\end{prop}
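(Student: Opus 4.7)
The plan is to use the equivalence (1)$\Leftrightarrow$(3) in Proposition \ref{prop:dlrank}. Given a family $(I_i : i<\lambda)$ of sequences mutually stable over $A$, I will find $X\subseteq \lambda$ with $|X|<\kappa_1+\kappa_2-1$ such that $(I_i : i\in \lambda\setminus X)$ are mutually stable over $Aab$; by Proposition \ref{prop:dlrank}(3)$\Rightarrow$(1) applied to the tuple $(a,b)$, this yields $\dlr(a,b/A)<\kappa_1+\kappa_2-1$.

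First I apply the hypothesis $\dlr(a/A)<\kappa_1$ together with Proposition \ref{prop:dlrank}(3) to obtain $X_1\subseteq \lambda$ with $|X_1|<\kappa_1$ such that $(I_i : i\in \lambda\setminus X_1)$ are mutually stable over $Aa$. Then, invoking Lemma \ref{lem:base change distal} so that op-dimension computed over $Aa$ is the same as over any base containing $A$, I apply Proposition \ref{prop:dlrank}(3) once more, this time to the tuple $b$ (using $\dlr(b/Aa)<\kappa_2$) and the family $(I_i : i\in \lambda\setminus X_1)$ which is mutually stable over $Aa$. This produces $X_2\subseteq \lambda\setminus X_1$ with $|X_2|<\kappa_2$ such that $(I_i : i\in \lambda\setminus (X_1\cup X_2))$ are mutually stable over $Aab$. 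Set $X=X_1\cup X_2$.

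All that remains is a small cardinal-arithmetic check that $|X|<\kappa_1+\kappa_2-1$. When both $\kappa_i$ are finite the strict inequalities give $|X_1|\leq \kappa_1-1$ and $|X_2|\leq \kappa_2-1$, hence $|X|\leq \kappa_1+\kappa_2-2<\kappa_1+\kappa_2-1$. When some $\kappa_i$ is infinite, $\kappa_1+\kappa_2-1$ should be read as $\kappa_1+\kappa_2=\max(\kappa_1,\kappa_2)$; since $|X_1|$ and $|X_2|$ are each strictly below that maximum, so is $|X|$. The main (quite mild) obstacle is this case analysis to recover the ``$-1$''; the model-theoretic content is just the two successive applications of Proposition \ref{prop:dlrank}(3), exactly parallel to the known additivity argument for dp-rank.
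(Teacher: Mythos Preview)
Your argument is correct and matches the paper's proof exactly: two successive applications of Proposition \ref{prop:dlrank}(3), first to $a$ over $A$ and then to $b$ over $Aa$, followed by the cardinal bound on $|X_1\cup X_2|$. The paper simply omits the explicit cardinal-arithmetic case split that you spell out.
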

\begin{proof}
	Let $(I_i:i<\lambda)$ be mutually stable over $A$. We can find $X_1\subseteq \lambda$, $|X_1|<\kappa_1$ such that the sequences $(I_i:i\in \lambda \setminus X_1)$ are mutually stable over $Aa$. We then find $X_2\subseteq \lambda$, $|X_2|<\kappa_2$ such that the sequences $(I_i:i\in \lambda\setminus (X_1\cup X_2))$ are mutually stable over $Aab$. This proves $\dlr(a,b/A)<\kappa_1+\kappa_2-1$.
\end{proof}

\section{Constructing linear orders}


\textbf{Assumption:} All indiscernible sequences in this section are assumed to be indexed by $\mathbb Q$. This will be recalled at times. The density of the indexing order is only for convenience and could be removed it most places, however having no endpoints is often essential for the arguments to go through, as we want to be able to extend sequences on both sides by realizing limit types.

\begin{definition}
A quintuple $\mathbf u = (\pi(x),I,J,\phi;A)$ is \emph{good} if:

$\bullet$ $I=(a_i:i\in \mathbb Q)$ and $J=(b_j:j\in \mathbb Q)$ are sequences of tuples, $A$ is a small set of parameters and $\pi(x)$ is a partial type over $AIJ$;

$\bullet$ the sequence $I+J$ is indiscernible over $A$;

$\bullet$ $\phi=\phi(x;y)\in L(A)$ with $x$ the same variable as that of $\pi(x)$ and $|y|=|a_i|=|b_j|$;

$\bullet$ there is $a\models \pi(x)$ such that for each $i\in \mathbb Q$, we have $\models \phi(a;a_i)$ and for each $j\in \mathbb Q$, we have $\models \neg \phi(a;b_j)$.
\medskip
\\
We will sometimes omit $A$ from the notation if it is irrelevant.
\medskip
\\
We write $(a,I,J,\phi;A)$ for $(\tp(a/AIJ),I,J,\phi;A)$.
\medskip
\\
If $\mathbf u = (a,I,J,\phi;A)$ and $\mathbf v = (a',I',J',\phi';A')$ are good, write $\mathbf u \trianglelefteq \mathbf v$ to mean:

$\bullet$ $\phi=\phi'$;

$\bullet$ $A\subseteq A'$;

$\bullet$ $\tp(a/A)=\tp(a'/A)$;

$\bullet$ $I+J$ and $I'+J'$ have the same EM-type over $A$.
\end{definition}
\ignore{
\medskip
\noindent
A quintuple $\mathbf u = (a,I,J,\phi;A)$ is \emph{adequate} if:

$\bullet$ the first three bullets in the definition of \emph{good} hold;

$\bullet$ there is $a\models \pi(x)$, an end segment $\mathcal I_0\subseteq \mathcal I$, and an initial segment $\mathcal J_0\subseteq \mathcal J$ such that $\phi(a;a_i)$ holds for $i\in \mathcal I_0$ and $\neg \phi(a;b_j)$ holds for $j\in \mathcal J_0$.

\medskip
\noindent
A quintuple $\mathbf u = (a,I,J,\phi;A)$ is \emph{minimal} if:

$\bullet$ it is good;

$\bullet$ $I$ and $J$ are mutually indiscernible over $Aa$;

$\bullet$ whenever $(a,I+I',J'+J,\phi;A)$ is adequate, then $I+I'$ and $J'+J$ are mutually stable over $Aa$.
\end{definition}

\begin{lemma}\label{lem_exists minimal}
	Let $\mathbf u = (a,I,J,\phi;A)$ be good, then there is $\mathbf v:=(a,I',J',\phi;A')$, $\mathbf v \trianglerighteq \mathbf u$ which is minimal. Furthermore, $A'=A\cup ILJ$, where $I+L+J$ is $A$-indiscernible.
\end{lemma}
\begin{proof}
We build by induction a sequence of good quintuples $(a,I_\alpha,J_\alpha,\phi;A_\alpha)$, where $A_\alpha$ has the form $A+B_\alpha+C_\alpha$ such that $B_\alpha+I_\alpha+J_\alpha+C_\alpha$ is $A$-indiscernible. First build $I\unlhd_{JAa} I+I_0$ and $J\unlhd_{II_0Aa} J_0+J$, where $I_0$ and $J_0$ are endless. Then $I_0$ and $J_0$ are mutually indiscernible over $IJAa$. Set $B_0=I$, $C_0=J$ so that $(a,I_0,J_0,\phi;A_0)$ is good. If some $(a,I_\alpha,J_\alpha,\phi;A_\alpha)$ is built and is not minimal, then there is a dense sequence $K$ such that $I_\alpha+K+J_\alpha$ is $A_\alpha$-indiscernible and contradicts minimality.
 Let $L$ be the maximal endless initial segment of $I_\alpha+K$ for which $\neg\phi(a;L)$ is finite.  Define $L'$ to be the complementary final segment, removing its initial point if there is one. As $K$ contradicts minimality, the sequences $I_\alpha+L$ and $L'+J_\alpha$ are not mutually indiscernible over $Aa$. Therefore for some formula $\theta\in L(A a)$, we have \[\textsf T(B_\alpha+I_\alpha+L+L'+J_\alpha+C_\alpha;\theta)>\textsf T(B_\alpha+I_\alpha+J_\alpha+C_\alpha;\theta).\]

Build successively $L\unlhd_{A_\alpha I_\alpha J_\alpha L' a}L+L^0$ and $L'\unlhd_{A_\alpha I_\alpha J_\alpha L L^0 a} L^1+L'$. Set:

$\bullet$ $B_{\alpha+1}=B_\alpha+I_\alpha+L$,

$\bullet$ $C_{\alpha+1}=L'+J_\alpha+C_\alpha$,

$\bullet$ $I_{\alpha+1}=L^0$ and $J_{\alpha+1}=L^1$.

At a limit $\lambda$, let $B_\lambda = \bigcup_{\alpha<\lambda} B_\lambda$, $C_\alpha=\bigcup_{\alpha<\lambda} C_\alpha$ and build $I_\lambda, J_\lambda$ so that $B_\lambda \unlhd_{AC_\lambda} B_\lambda+I_\lambda$ and $C_\lambda\unlhd_{AC_\lambda I_\lambda} J_\lambda+C_\lambda$.

At each successor stage, some $\textsf T(B_\alpha+I_\alpha+J_\alpha+C_\alpha;\theta)$ for $\theta\in L(A)$ increases. This process must therefore stop in less than $(|T|+|A|)^+$ steps.
\end{proof}

}

\begin{definition}
	Let $\mathbf u = (a,I,J,\phi;A)$ be good and set $p(\mathbf u)=\tp(a/AIJ)$. We define the following binary relations on realizations of $p(\mathbf u)$:
	
	$\bullet$ $(a,b)\in E(\mathbf u)$ if for any sequence $K$ such that $I+K+J$ is $A$-indiscernible, if $\phi(a;d)$ (resp. $\neg \phi(a;d)$) holds for all $d\in K$, then $\phi(b;d)$ (resp. $\neg \phi(b;d)$) holds for almost all $d\in K$ (all but finitely many).
		
	$\bullet$ $(a,b)\in R(\mathbf u)$ if we cannot find a sequence $K$ such that both $(a,I+K,J,\phi;A)$ and $(b,I,K+J,\phi;A)$ are good.
\end{definition}

\begin{lemma}
	Let $\mathbf u$ be good, then $E(\mathbf u)$ and $R(\mathbf u)$ are $\bigvee$-definable relations on $p(\mathbf u)$. The relation $E(\mathbf u)$ is an equivalence relation and $R(\mathbf u)$ is reflexive, transitive and $E(\mathbf u)$-equivariant. Furthermore $(a,b)\in E(\mathbf u)$ if and only if both $(a,b)$ and $(b,a)$ are in $R(\mathbf u)$.
\end{lemma}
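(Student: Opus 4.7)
The plan is to prove the final ``furthermore'' clause first, since once the characterization $E(\mathbf{u})(a,b)\Leftrightarrow R(\mathbf{u})(a,b)\wedge R(\mathbf{u})(b,a)$ is established, all remaining properties of $E$ follow formally: $\bigvee$-definability is preserved under conjunction, symmetry is immediate, and reflexivity and transitivity of $E$ are inherited from those of $R$. The workhorse tool throughout will be shrinking of indiscernibles (Proposition \ref{shrinking1}), used in the following form: given an indiscernible sequence $I+K+J$ over $A$ and any tuple $b$, the truth values of $\phi(b;\cdot)$ on $K$ form a finite convex partition, so any infinite $K$ contains an infinite convex subsequence $K'$ on which $\phi(b;\cdot)$ is constant.

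For $\bigvee$-definability of $R(\mathbf{u})$, the negation $\neg R(\mathbf{u})(a,b)$ asserts the existence of an infinite endless $K$ such that $I+K+J$ is $A$-indiscernible with $\phi(a;d)$ and $\neg\phi(b;d)$ for every $d\in K$. By Ramsey and compactness this is equivalent to consistency of the partial type in variables $(y_n)_{n<\omega}$ over $AIJab$ saying that $I+(y_n)_{n<\omega}+J$ is $A$-indiscernible, together with $\phi(a;y_n)\wedge\neg\phi(b;y_n)$ for each $n$. Consistency of a partial type in $(a,b)$ is itself type-definable, so $\neg R(\mathbf{u})$ is type-definable and $R(\mathbf{u})$ is $\bigvee$-definable.

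Reflexivity of $R$ is immediate since $\phi(a;K)$ and $\neg\phi(a;K)$ are incompatible. For transitivity, any witness $K$ to $\neg R(a,c)$ (satisfying $\phi(a;K)$ and $\neg\phi(c;K)$) shrinks via $\phi(b;\cdot)$ to an infinite convex endless $K'\subseteq K$ on which $\phi(b;\cdot)$ is constant, yielding a witness to either $\neg R(a,b)$ or $\neg R(b,c)$ according to the constant value. For the characterization of $E$: the easy direction $E(a,b)\Rightarrow R(a,b)\wedge R(b,a)$ is clear, since a witness $K$ to $\neg R(a,b)$ has $\phi(a;K)$ with $\neg\phi(b;d)$ for all $d\in K$, contradicting the ``$\phi(b;\cdot)$ holds almost everywhere on $K$'' clause of $E$, and symmetrically for $R(b,a)$. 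Conversely, if $\neg E(a,b)$ then some indiscernible $K$ satisfies either $\phi(a;K)$ with infinitely many $\neg\phi(b;d)$, or $\neg\phi(a;K)$ with infinitely many $\phi(b;d)$; shrinking $K$ produces an infinite convex endless $K'\subseteq K$ witnessing $\neg R(a,b)$ or $\neg R(b,a)$ respectively.

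For $E$-equivariance of $R$: given $E(a,a')$, $E(b,b')$, $R(a,b)$, a hypothetical witness $K$ to $\neg R(a',b')$ would force $\phi(a;\cdot)$ almost everywhere on $K$ (by $E(a,a')$) and $\neg\phi(b;\cdot)$ almost everywhere on $K$ (by $E(b,b')$); shrinking to a common infinite convex endless subsequence $K'$ gives a witness to $\neg R(a,b)$, a contradiction. The main technical obstacle throughout is to ensure at each shrinking step that $K'$ is genuinely endless and still fills a cut between $I$ and $J$, so that the quintuples under consideration really are good; this is handled either by choosing all $K$'s to be densely ordered without endpoints (possible by the observation following Proposition \ref{shrinking1}) or by working consistently at the level of the type-definable consistency condition from the second paragraph.
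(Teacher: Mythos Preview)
Your proof is correct and follows essentially the same strategy as the paper: reduce the properties of $E$ to those of $R$ via the characterization $E=R\cap R^{-1}$, with transitivity of $R$ being the only step requiring real work. The paper in fact states that ``only transitivity of $R(\mathbf u)$ is not immediate from the definition'' and proves just that.

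The one genuine difference is in how transitivity is handled. Given a witness $K$ to $\neg R(a,c)$, the paper replaces $K$ by a Morley sequence of its limit type over $Ab$, so that the new $K$ is automatically indiscernible over $Ab$ and $\phi(b;\cdot)$ is constant on all of it; this sidesteps any endlessness concern since the Morley sequence can be indexed by any order. You instead invoke shrinking of indiscernibles to pass to a convex subsequence $K'\subseteq K$ on which $\phi(b;\cdot)$ is constant, and then have to argue separately that $K'$ can be taken endless (via density of the indexing order). Both routes are valid; the limit-type trick is a bit slicker, while your approach stays closer to the raw combinatorics.

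One small redundancy: your final paragraph proving $E$-equivariance of $R$ is unnecessary once you have transitivity and the characterization, since $E(a,a')\wedge R(a,b)\wedge E(b,b')$ gives $R(a',a)\wedge R(a,b)\wedge R(b,b')$, and two applications of transitivity yield $R(a',b')$ directly.
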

\begin{proof}
	Only transitivity of $R(\mathbf u)$ is not immediate from the definition. Let $a,b,c\models p(\mathbf u)$ and assume that $(a,c)\notin R(\mathbf u)$ as witnessed by $K$: that is $(a,I+K,J,\phi;A)$ and $(c,I,K+J,\phi;A)$ are good. Replacing $K$ by a Morley sequence of its limit type, we may assume that $K$ is indiscernible over $Ab$. If $\phi(b;d)$ holds for $d\in K$, then $(b,I+K,J,\phi;A)$ is good. This implies that $(b,c) \notin R(\mathbf u)$. Similarly, if $\neg\phi(b;d)$ holds for $d\in K$, then $(b,I,K+J,\phi;A)$ is good and $(a,b)\notin R(\mathbf u)$.
\end{proof}

\begin{definition}
	We say that $\mathbf u$ is linear if it is good and any two realizations of $p(\mathbf u)$ are $R(\mathbf u)$-comparable.
\end{definition}

In other words, $\mathbf u=(a,I,J,\phi;A)$ is linear if it is good and we cannot find $a,b\models p(\mathbf u)$ and $K,L$ two endless indiscernible sequences such that both $(a,I,K+J,\phi;A)$ and $(b,I+K,J,\phi;A)$ are good as well as $(a,I+L,J,\phi;A)$ and $(b,I,L+J,\phi;A)$.

If $\mathbf u$ is linear, then $R(\mathbf u)$ induces a linear order on the quotient of $p(\mathbf u)$ by $E(\mathbf u)$. It is usually easier to consider the type-definable relation $\neg R(\mathbf u)$ which induces a strict linear order on that quotient.

\begin{lemme}\label{lem_max decomposition}
Let $\mathbf u_\alpha=(a,I^0_\alpha,J^0 _\alpha,\phi_\alpha;A)$, $\alpha<\eta$, be good, where the sequences $(I^0 _\alpha+J^0 _\alpha:\alpha<\eta)$ are mutually indiscernible over $A$. There is $\mu$ with $\eta \leq \mu\leq \dlr(a/A)$ and a family $(I_\alpha,J_\alpha,\phi_\alpha)_{\alpha<\mu}$ such that:
\begin{itemize}
\item[$\bullet_0$] each $(a,I_\alpha,J_\alpha,\phi_\alpha;A)$ is good;

\item[$\bullet_1$] for $\alpha<\eta$, $\mathbf u_\alpha \unlhd (a,I_\alpha,J_\alpha,\phi_\alpha;A)$;
\item[$\bullet_2$] the sequences $(I_\alpha+J_\alpha:\alpha<\mu)$ are mutually indiscernible over $A$;
\item[$\bullet_3$] whenever $K_\alpha^0, K_\alpha^1$, $\alpha<\mu$ are sequences and $A'\supseteq A$ are such that the sequences $(I_\alpha+J_\alpha:\alpha<\mu)$ are mutually indiscernible over $A'$ and $(a,I_\alpha+K_\alpha^0,K_\alpha^1+J_\alpha,\phi_\alpha;A)$ are good, then the sequences in the family $(I_\alpha+K_\alpha^0:\alpha<\mu){}^\frown(K_\alpha^1+J_\alpha:\alpha<\mu)$ are mutually stable over $A'a$;
\item[$\bullet_4$] if $L$ is indiscernible over $AI_{<\mu}J_{<\mu}$ and the sequences $(I_\alpha+J_\alpha:\alpha<\mu)$ are mutually indiscernible over $AL$, then $L$ is stable over $AI_{<\mu}J_{<\mu}a$;

\item[$\bullet_5$] the sequences in the family $(I_\alpha:\alpha<\mu){}^\frown (J_\alpha:\alpha<\mu)$, are mutually indiscernible over $Aa$.
\end{itemize}

\end{lemme}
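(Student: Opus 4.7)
The plan is to build the family $(I_\alpha,J_\alpha,\phi_\alpha)_{\alpha<\mu}$ by transfinite induction, starting from the given data and at each successor stage repairing a failure of one of $\bullet_3$, $\bullet_4$, $\bullet_5$. Termination rests on two-layer bookkeeping. Each repair of $\bullet_4$ adds a new sequence; once added, the enlarged family is mutually indiscernible over $A$ but none of its members is indiscernible over $Aa$ (by the choice of $\phi_{\mu_*}$ below), so by the NIP hypothesis $\dpr(a/A)<|T|^+$ bounds the total number of such additions. Each repair of $\bullet_3$ or $\bullet_5$ strictly raises some $\textsf T(\cdot,\theta)$ for a formula $\theta$ over the already-introduced parameters; since $\textsf T(\cdot,\theta)$ is bounded by an integer function of $\theta$ alone, between two $\bullet_4$-additions this phase ends in $<(|T|+|A|)^+$ steps.

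\textbf{Construction.} First densify each $I^0_\alpha+J^0_\alpha$ to be endless and densely ordered by inserting Morley sequences of the appropriate limit types at every cut, preserving mutual indiscernibility over $A$; this yields a base family satisfying $\bullet_0$--$\bullet_2$. At a successor stage with family $(I_\beta,J_\beta,\phi_\beta)_{\beta<\mu_*}$, give priority to $\bullet_4$: if some $L$ witnesses its failure, then $L$ is indiscernible over $AI_{<\mu_*}J_{<\mu_*}$ but not stable over $AI_{<\mu_*}J_{<\mu_*}a$, so condition (4) of the proposition characterising stability yields a formula $\phi_{\mu_*}(x;y)\in L(A)$ (after absorbing parameters from $AI_{<\mu_*}J_{<\mu_*}$ into $y$) such that $\phi_{\mu_*}(a;\cdot)$ alternates cofinally on $L$; shrinking of indiscernibles over $Aa$ then produces two adjacent infinite convex classes $L_0,L_1$ of $L$ on which $\phi_{\mu_*}(a;\cdot)$ takes opposite truth values, and we set $(I_{\mu_*},J_{\mu_*}):=(L_0,L_1)$, Morley-extending each half if needed to make it endless. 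If instead $\bullet_5$ fails, the shrinking of indiscernibles relation for the witnessing formula $\theta\in L(Aa)$ has more than one class across some $I_\beta$ or $J_\beta$; insert a Morley segment of $\lim(I_\beta)$ or $\lim(\op(J_\beta))$ at that class boundary to raise $\textsf T(\cdot,\theta)$ by at least one. If $\bullet_3$ fails with witnesses $(K_\alpha^0,K_\alpha^1,A')$, Proposition \ref{prop:mutually stable}(4) supplies a formula $\theta'\in L(A'a)$, and we enlarge the offending $I_\alpha$ by $K_\alpha^0$ and $J_\alpha$ by $K_\alpha^1$ via the standard $\unlhd$-extension, again raising $\textsf T(\cdot,\theta')$. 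At limits take unions and re-densify.

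\textbf{Main obstacle.} The delicate point is the interleaving: a $\bullet_4$-repair enlarges $\mu_*$ and hence expands the alphabet over which $\textsf T$-counts are relevant, and one must check that this does not invalidate prior progress. This is handled by always choosing Morley extensions freely over all prior data, so that pre-existing $\textsf T$-counts are preserved and only fresh formulas (those involving the new $I_{\mu_*}J_{\mu_*}$) contribute to subsequent repairs. With at most $|T|^+$ sequence-additions, interspersed with phases of $<(|T|+|A|)^+$ extension repairs each, the induction closes at some $\mu\leq|T|^++|A|^+$, yielding a family satisfying $\bullet_0$ through $\bullet_5$.
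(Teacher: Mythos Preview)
Your iterative-repair strategy has a genuine gap in the handling of $\bullet_3$. The condition $\bullet_3$ is universally quantified over \emph{all} $A'\supseteq A$ and all families $(K_\alpha^0,K_\alpha^1)$; a single failing instance supplies a formula $\theta'\in L(A'a)$, but $A'$ is not among the ``already-introduced parameters''---it is an arbitrary extension of $A$. Hence the $\textsf T(\cdot,\theta')$-counts you propose to increment live over an unbounded collection of parameter sets, and there is no monotone ordinal-valued quantity controlling the process. Moreover, enlarging $I_\alpha$ to $I_\alpha+K_\alpha^0$ and $J_\alpha$ to $K_\alpha^1+J_\alpha$ need not preserve mutual indiscernibility of the whole family over $A$: the hypothesis of $\bullet_3$ only gives that each $I_\alpha+K_\alpha^0+K_\alpha^1+J_\alpha$ is $A$-indiscernible (from goodness), not that the enlarged sequences are \emph{mutually} indiscernible. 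So your repair step may destroy $\bullet_1$.

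The paper avoids all of this by a one-shot maximality argument rather than iterative repair. One takes a \emph{maximal} good family of formulas $(\phi_\alpha)_{\alpha<\mu}$ extending the given one (bounded in length by $\dlr(a/A)$, existence by compactness on increasing unions), with witnessing sequences $(I_\alpha,J_\alpha)$. Maximality directly yields $\bullet_4$: a failure of $\bullet_4$ produces, after padding the elements of $L$ with the relevant parameters $\bar d$, a new good tuple contradicting maximality---essentially your $\bullet_4$-repair, but used once as a contradiction rather than iterated. The substantive point you are missing is that $\bullet_3$ is then \emph{derived from} $\bullet_4$, via strengthened forms $\bullet_4'$ (allowing the base to grow to $A'$) and $\bullet_4''$ (allowing a family of $L$'s). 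The derivation builds Morley envelopes $M_\alpha^i,N_\alpha^i,I'_\alpha,J'_\alpha$ around the given $K_\alpha^0,K_\alpha^1$, observes that $(I'_\alpha+J'_\alpha)_\alpha$ again witnesses maximality of the \emph{same} formula family, and applies $\bullet_4''$ to the padded envelopes relative to that new witness. Condition $\bullet_5$ is then enforced once at the end by replacing each $(I_\alpha,J_\alpha)$ with Morley tails $(I'_\alpha,J'_\alpha)$; no iterative repair of $\bullet_5$ is needed.
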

\begin{proof}
Call a family $(\psi_\alpha:\alpha<\mu)$ \emph{good} if each $\psi_\alpha = \psi_\alpha(x;y_\alpha)\in L(A)$, with $|x|=|a|$ and there are sequences $(I_\alpha+J_\alpha:\alpha<\mu)$ mutually indiscernible over $A$ such that each $(a,I_\alpha,J_\alpha,\psi_\alpha;A)$ is good and for $\alpha<\eta$, $I_\alpha+J_\alpha$ has same EM-type as $I^0_\alpha+J^0_\alpha$ over $A$. Any good family has length $\mu\leq \dlr(a/A)<|T|^+$. Also by compactness, an increasing union of good families is again good. Hence there is a maximal good family $(\phi_\alpha:\alpha<\mu)$ extending $(\phi_\alpha:\alpha<\eta)$. Let its goodness be witnessed by $(I_\alpha+J_\alpha:\alpha<\mu)$. Properties $\bullet_0$ and $\bullet_2$ are immediate by construction. We will prove that this family satisfies $\bullet_3$ and $\bullet_4$. We can then enforce $\bullet_5$ by building $I_\alpha \unlhd I_\alpha+I'_\alpha$ and $J_\alpha \unlhd J'_\alpha+J_\alpha$ over everything and replacing each $(I_\alpha,J_\alpha)$ by $(I'_\alpha,J'_\alpha)$. The new family still witnesses goodness and therefore also satisfies $\bullet_{1\to 4}$.

Assume that $\bullet_4$ does not hold. So there is some $L$ such that the sequences $(I_i + J_i:i<\mu)$ along with $L$ are mutually indiscernible over $A$, but $L$ is not stable over $AI_{<\mu}J_{<\mu}a$. Without loss, $L$ has no endpoints. Let $L\unlhd_{AI_{<\mu}J_{<\mu}a} L_0+L+L_1$. Then $L_0,L_1$ are mutually indiscernible over $AI_{<\mu}J_{<\mu}a$ but $L_0+L_1$ is not indiscernible over that same set. There is a formula $\phi_*(a,y;\bar d)$, $\bar d\in AI_{<\mu}J_{<\mu}L_0L_1$, which holds on an end segment of $L_0$, whereas its negation holds on an initial segment of $L_1$. Take $L'_0$ an end segment of $L_0$, $L'_1$ an initial segment of $L_1$ such that $L'_0,L'_1$ contain no element from $\bar d$. Take also end segments $I'_{\alpha}$ of $I_{\alpha}$ and initial segments $J'_{\alpha}$ of $J_{\alpha}$ for $\alpha<\mu$ such that those also do not contain any element from $\bar d$. Finally let $L''_0,L''_1$ be the sequences $L'_0,L'_1$ respectively, with $\bar d$ concatenated to every element of the sequence. Then the sequences $I'_{\alpha}+J'_{\alpha}$, $\alpha<\mu$ and $L''_0+L''_1$ are mutually indiscernible over $A$. Each $(a,I'_\alpha,J'_\alpha,\phi_\alpha;A)$ is good as is $(a,L''_0,L''_1,\phi'_*;A)$, where $\phi'_*(x;y\hat{~}\bar z)=\phi_*(x,y;\bar z)$. Also the EM-type over $A$ of each $I'_\alpha+J'_\alpha$ is the same as that of $I_\alpha+J_\alpha$. This contradicts maximality of the initial family.

From $\bullet_4$, we can deduce two seemingly stronger statements:

\begin{itemize}
\item[$\bullet_4'$] If $A\subseteq A'$, $L$ is indiscernible over $A'I_{<\mu}J_{<\mu}$ and the sequences $(I_\alpha+J_\alpha:\alpha<\mu)$ are mutually indiscernible over $A'L$, then $L$ is stable over $A'I_{<\mu}J_{<\mu}a$.

\item[$\bullet_4''$] If $A\subseteq A'$, $(L_i:i<\beta)$ is a family of sequences mutually indiscernible over $A'I_{<\mu}J_{<\mu}$ such that $(I_\alpha+J_\alpha:\alpha<\mu)$ are mutually indiscernible over $A'L_{<\beta}$, then $(L_i:i<\beta)$ are mutually stable over $A'I_{<\mu}J_{<\mu}a$.
\end{itemize}

To see that $\bullet_4'$ follows, consider the sequence $L'$ obtained from $L$ by concatenating $A'$ to each of its elements. Then $\bullet_4$ applied to $L'$ gives $\bullet_4'$. To deduce $\bullet_4''$, let $(L_i:i<\beta)$ be given as above. Construct inductively on $i<\beta$, $L_i \unlhd M_i+L_i+N_i$ over everything built so far, including all of the $L_i$'s. Then by $\bullet_4'$, $M_i+N_i$ is stable over $A'I_{<\mu}J_{<\mu}M_{\neq i}N_{\neq i}a$. It follows from Lemma \ref{lem:pound} that $M_i+N_i$ is indiscernible over that set. Thus the sequences $(M_i+N_i:i<\beta)$ are mutually indiscernible over $A'I_{<\mu}J_{<\mu}a$ as required.

We now show that $\bullet_3$ holds. So let $K^0_\alpha,K^1_\alpha$, $\alpha<\mu$, and $A'\supseteq A$ be given such that $(I_\alpha+J_\alpha:\alpha<\mu)$ are mutually indiscernible over $A'$ and $(a,I_\alpha+K_\alpha^0,K_\alpha^1+J_\alpha,\phi_\alpha;A)$ are good. Build inductively on $\alpha<\mu$, \[I_\alpha+K_\alpha^0\unlhd M^0_\alpha+(I_\alpha+K_\alpha^0)+N^0_\alpha+I'_\alpha\] and \[K^ 1_\alpha+J_\alpha\unlhd J'_\alpha+N^1_\alpha+(K^1_\alpha+J_\alpha)+M^1_\alpha,\] each $\unlhd$ being understood to hold over everything built so far (including $a$). Then the tuples $(a,I'_\alpha,J'_\alpha,\phi_\alpha; A')$ are good. Furthermore as the sequences $(I_\alpha+K_\alpha^0+K_\alpha^1+J_\alpha:\alpha<\mu)$ are mutually stable over $A'$ (as witnessed by $I_\alpha+J_\alpha$), the sequences \[(M^0_\alpha+N^0_\alpha+I'_\alpha+J'_\alpha+N^1_\alpha+M^1_\alpha:\alpha<\mu)\] are mutually indiscernible over $A'$ (property (3) of Proposition \ref{prop:mutually stable}). The family $(I'_\alpha+J'_\alpha)_\alpha$ also witnesses maximality of the family $(\phi_\alpha)_{\alpha<\mu}$, hence we can apply $\bullet_4''$ to it. We deduce that the sequences $(M^{0}_\alpha+N^{0}_\alpha:\alpha<\mu){}^\frown (N^{1}_\alpha+M^{1}_\alpha:\alpha<\mu)$ are mutually stable over $A'I'_{<\mu}J'_{<\mu}a$. By Lemma \ref{lem:poundpound}, they are mutually indiscernible over that set. Therefore $(I_\alpha+K^0_\alpha:\alpha<\mu){}^\frown(K_\alpha^1+J_\alpha:\alpha<\mu)$ are mutually stable over $A'a$.
\end{proof}

\begin{rem}
	If $\dlr(a/A)\geq \eta$, then there is a family $(I_\alpha,J_\alpha,\phi_\alpha)_{\alpha<\mu}$ satisfying $\bullet_0$ and $\bullet_{2\to 4}$ with $\mu\geq \eta$.
\end{rem}
We now come to the main technical point of the construction.

\begin{prop}\label{prop_exists linear}
	Let $\mathbf u _\alpha=(a,I_\alpha,J_\alpha,\phi_\alpha;A)$, $\alpha<\mu$ be a family satisfying $\bullet_0$ and $\bullet_{2\to 5}$ of Lemma \ref{lem_max decomposition}. Then there is $A'\supseteq A$ such that each $\mathbf v_\alpha:=(a,I_\alpha,J_\alpha,\phi_\alpha;A')$ is linear.
\end{prop}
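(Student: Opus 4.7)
My plan is to construct $A'$ by a transfinite recursion of length bounded by $(|T|+|A|)^+$, at each stage absorbing the parameters of a current non-linearity witness, and then to force termination by a shrinking-of-indiscernibles invariant of the same flavor as the one used in the ignored Lemma just before the statement (and in the maximality step of Lemma \ref{lem_max decomposition}).

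Set $A^0 = A$. Suppose $A^\gamma$ has been built. If there is $\alpha < \mu$ such that $\mathbf v_\alpha^\gamma := (a, I_\alpha, J_\alpha, \phi_\alpha; A^\gamma)$ is not linear, pick $b, c \models p(\mathbf v_\alpha^\gamma)$ together with endless indiscernible sequences $K, L$ witnessing failure of $R(\mathbf v_\alpha^\gamma)$-comparability. By a standard extraction + Morley extension (of $\lim(I_\alpha)$ and $\lim(\op(J_\alpha))$) over everything built so far, I may assume that $K$ and $L$ fit into the cut of $(I_\alpha, J_\alpha)$ while preserving mutual indiscernibility of the rest of the family $(I_\beta + J_\beta)_{\beta \neq \alpha}$ over $A^\gamma$. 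Set $A^{\gamma+1} = A^\gamma \cup \{b,c\} \cup K \cup L$, take unions at limits, and finally $A' = \bigcup_\gamma A^\gamma$. To bound the recursion, for each pair $(\alpha, \theta)$ with $\theta(\bar y; \bar z) \in L$, track the integer $N_\gamma(\alpha, \theta) := \sup\{\textsf T(I_\alpha + J_\alpha, \theta(\bar y; \bar d)) : \bar d \in A^\gamma\}$. Shrinking of indiscernibles bounds each $N_\gamma(\alpha, \theta)$ by a constant depending only on $\theta$, so once I show that each successful step strictly increases some $N(\alpha, \theta)$, the process must stop in $<(|T|+|A|)^+$ steps, at which point $A'$ makes every $\mathbf v_\alpha$ linear.

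The main obstacle is the strict-increase claim. The argument I have in mind: suppose no $N$ increased at step $\gamma+1$. Then every $A^{\gamma+1}$-formula on $I_\alpha + J_\alpha$ already has its shrinking pattern witnessed over $A^\gamma$, which is exactly what is needed to insert $K$ and $L$ into the cut $(I_\alpha, J_\alpha)$ without disturbing the mutual indiscernibility of the whole family over $A^\gamma$. But then $\bullet_3$ of Lemma \ref{lem_max decomposition}, applied to the extended configuration (with $K_\alpha^0 = K$, $K_\alpha^1$ an appropriate tail, and similarly for $L$), forces the augmented family to be mutually stable over $A^\gamma a$. This contradicts the defining feature of the witnesses $b, c, K, L$: the four simultaneous goodness conditions force $\phi_\alpha(b;\cdot)$ and $\phi_\alpha(c;\cdot)$ to take opposite constant truth values on $K$ compared with $L$, which is incompatible with $K, L$ being reshufflable into a single $A^\gamma a$-stable configuration. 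The technically delicate point is the threading of $K$ and $L$ through the family while keeping mutual indiscernibility with the other $I_\beta + J_\beta$ over $A^\gamma$; this is the step where the hypothesis on shrinking patterns is genuinely used, and where the compactness / extraction machinery from Proposition \ref{prop:mutually stable} needs to be applied carefully.
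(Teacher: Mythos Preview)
Your recursion breaks at the very first successor step: once you put the witness $b$ into $A^{\gamma+1}$, the quintuple $\mathbf v_\alpha^{\gamma+1}=(a,I_\alpha,J_\alpha,\phi_\alpha;A^{\gamma+1})$ is no longer good. Indeed $b\models p(\mathbf v_\alpha^\gamma)$, so $\phi_\alpha(b;d)$ holds for $d\in I_\alpha$ and fails for $d\in J_\alpha$; hence $I_\alpha+J_\alpha$ is not indiscernible over $A^\gamma b$, and goodness (second bullet of the definition) fails. The ignored ``minimal'' lemma you are imitating does not have this problem because there the sequences themselves are pushed deeper into the cut and the old pieces become parameters; you keep $I_\alpha,J_\alpha$ fixed, and that is incompatible with absorbing $b,c$.

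Even ignoring this, the termination argument does not go through. Your invariant $N_\gamma(\alpha,\theta)$ is computed on the fixed sequence $I_\alpha+J_\alpha$, and the new parameters $b,c,K,L$ do not force extra alternation there: $\phi_\alpha(b;\cdot)$ has exactly one cut on $I_\alpha+J_\alpha$, the same one $\phi_\alpha(a;\cdot)$ already has, and elements of $K,L$ sit in that same cut, so formulas using them do not create new convex classes. More seriously, the contradiction you extract from $\bullet_3$ is a non sequitur: $\bullet_3$ asserts mutual stability of the extended family over $A'a$, for the \emph{fixed} element $a$ and only under the hypothesis that each $(a,I_\alpha+K_\alpha^0,K_\alpha^1+J_\alpha,\phi_\alpha;A)$ is good. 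Your sequences $K,L$ are tailored to $b$ and $c$, not to $a$, so the hypothesis of $\bullet_3$ is not available; and even if it were, stability over $A'a$ says nothing about how $\phi_\alpha(b;\cdot)$ or $\phi_\alpha(c;\cdot)$ behave on $K$ and $L$.

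The paper's argument is of an entirely different nature and does not use a transfinite shrinking invariant at all. It fixes one coordinate, takes $n$ larger than the VC-dimension of $\phi_\alpha$, and shows (Step~1) that one cannot nest $n$ pairwise $R(\mathbf u)$-incomparable realizations $a_0,\ldots,a_{n-1}$ via intermediate sequences $K_\alpha^k$: using $\bullet_3,\bullet_4$ one can permute the order in which the $a_k$'s cut the $0$-th sequence arbitrarily, which shatters $\{a_0,\ldots,a_{n-1}\}$ by $\phi$. A maximal such nested configuration (Step~2) then yields, over a base enlarged by the intermediate sequences and the earlier $a_k$'s, a linear quintuple, and an automorphism (Step~3) transports this back to the original $I_\alpha,J_\alpha$. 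The finiteness comes from VC-dimension, not from shrinking of indiscernibles.
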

\begin{proof}
During the proof we will often replace a sequence $K$ say by a sequence $K'$ so that $K\unlhd K+K'$ over everything that we have built so far. Thus the new sequence $K'$ is indiscernible over all parameters considered. This will never affect previous assumptions: for instance if $K$ was indiscernible over some $B$, then $K'$ has the same type as $K$ over it. If some $(b,K,L,\psi; B)$ was good, then so is $(b,K',L,\psi;B)$. If $K$ was stable over some $B$, then $K'$ is also stable over $B$. In particular, note that by $\bullet_5$, doing this to the original sequences $I_\alpha$, $J_\alpha$ does not change their type over $Aa$ (and over each other), hence at the cost of applying automorphisms, we can assume that they do not change during the construction.

Let $A_\mu=AI_{<\mu}J_{<\mu}$ and set $p_0(x) =\tp(a/A_\mu)$.

\medskip
\underline{Step 0}: Note that if some $(b,I,J,\psi;B)$ is linear, then so is $(b,I,J,\psi;B')$ for any $B'\supseteq B$ for which this quintuple is good. To prove the proposition it suffices then to find $A'\supseteq A$ such that the sequences $(I_\alpha+J_\alpha:\alpha<\mu)$ are mutually indiscernible over $A'$ and
$(a,I_0,J_0,\phi_0;A')$ is linear. Indeed, having done this, properties $\bullet_0$ and $\bullet_{2\to 4}$ still hold for $A'$ replacing $A$ (for $\bullet_4$, this is given by $\bullet'_4$) and we can enforce $\bullet_5$ as in the beginning of the proof of Lemma \ref{lem_max decomposition}. We can then inductively increase $A'$ to make each $(a,I_\alpha,J_\alpha,\phi_\alpha;A')$ linear one after the other.

\medskip
\underline{Step 1}: Set $\phi=\phi_0$ and $\mathbf u = (a,I_0,J_0,\phi;A)$.
Let $n$ be larger than the VC-dimension of the formula $\phi(x;y)$. We show that one cannot find tuples $a_k\models p_0$, $k<n$ and sequences $K_\alpha^ k$, $\alpha<\mu$, $k<n-1$ such that:

\begin{itemize}
\item[$\boxtimes_0$] the sequences $(I_\alpha+K_\alpha^0+\cdots+K_\alpha^{n-2}+J_\alpha:\alpha<\mu)$ are mutually indiscernible over $A$;

\item[$\boxtimes_1$] each $(a_k,I_\alpha+K_\alpha^{<k},K_\alpha^{\geq k}+J_\alpha,\phi_\alpha;A)$ is good;

\item[$\boxtimes_2$] for each $k\neq k'<n$, the tuples $a_k$ and $a_{k'}$ are $R(\mathbf u)$-incomparable.
\end{itemize}
Assume for a contradiction that we are given such tuples and sequences. We show that for any $\sigma$ permutation of $n$, we can find sequences $K_{0,\sigma}^k$, $k<n-1$ such that:

\begin{itemize}
\item[$\boxtimes_{3,\sigma}$] the sequence $I_0+K_{0,\sigma}^0+\cdots+K_{0,\sigma}^{n-2}+J_0$ is indiscernible over $A$;

\item[$\boxtimes_{4,\sigma}$] each $(a_k,I_0+K_{0,\sigma}^{<\sigma(k)},K_{0,\sigma}^{\geq \sigma(k)}+J_0,\phi)$ is good.
\end{itemize}
Why is this enough? Fix any $\sigma$ a permutation of $n$, $i<n$ and let $e$ be an element of $K_{0,\sigma}^{i}$ (or $J_0$ if $i=n-1$). Then we have $\models \phi(a_j,e) \iff \sigma(j)>i$. This shows that the set $\{a_0,\ldots,a_{n-1}\}$ is shattered by $\phi$ and contradicts the choice of $n$.

We now turn to the construction of $K_{0,\sigma}^k$. For $\sigma$ the identity, we can take $K_{0,\sigma}^k=K_0^k$. Assume that we have built those sequences for some value of $\sigma$. Let $i<n-1$ and set $\tau= (i, i+1)\circ \sigma$. We show how to build the sequences $K_{0,\tau}^k$. Since the sequences $(I_\alpha+J_\alpha:\alpha<\mu)$ are mutually indiscernible over $A$, the sequences $I_\alpha+K_\alpha^0+\cdots+K_\alpha^{n-2}+J_\alpha$, $0<\alpha<\mu$ and $I_0+K_{0,\sigma}^0+\cdots+K_{0,\sigma}^{n-2}+J_0$ are mutually stable over $A$. Replacing each of the sequences $I_\alpha,J_\alpha$, $K_\alpha^k$ and $K_{0,\sigma} ^k$ by a Morley sequence of their limit types and applying Lemma \ref{lem:poundpound}, we obtain:


\begin{itemize}
\item[$\boxtimes_{5,\sigma}$] the sequences $I_\alpha+K_\alpha^0+\cdots+K_\alpha^{n-2}+J_\alpha$, $0<\alpha<\mu$ and $I_0+K_{0,\sigma}^0+\cdots+K_{0,\sigma}^{n-2}+J_0$ are mutually indiscernible over $A$.
\end{itemize}
Set $u=\sigma^{-1}(i)$, $v=\sigma^{-1}(i+1)$ and let $b=a_{u}$ and $c=a_{v}$. Assume $u<v$ (the case $u>v$ is similar). The main thing to prove is:

\begin{itemize}
\item[$\boxplus$] The two sequences $I_0+K_{0,\sigma}^{<i}$ and $K_{0,\sigma}^{>i}+J_0$ are mutually indiscernible over $Abc$.
\end{itemize}

To prove this, let us first consider the situation over the base $Ab$. For $0<\alpha<\mu$, the tuple $(b,I_\alpha+K_\alpha^{<u},K_\alpha^{\geq u}+J_\alpha,\phi_\alpha;A)$ is good, and so is $(b,I_0+K_{0,\sigma}^{<i}, K_{0,\sigma}^{\geq i}+J_0,\phi;A)$. Hence by $\bullet_3$, the sequences $I_\alpha+K_\alpha^{<u}$, $K_\alpha^{\geq u}+J_\alpha$, $I_0+K_{0,\sigma}^{<i}$ and $K_{0,\sigma}^{\geq i}+J_0$, where $\alpha$ ranges in $0<\alpha<\mu$, are mutually stable over $Ab$. Similarly, the sequences $I_\alpha+K_\alpha^{<v}$, $K_\alpha^{\geq v}+J_\alpha$, $I_0+K_{0,\sigma}^{<i+1}$ and $K_{0,\sigma}^{\geq i+1}+J_0$ are mutually stable over $Ac$. Replacing all the sequences $K_\alpha^j$ and $K_{0,\sigma}^{j}$ by Morley sequences of their limit types over everything and applying Lemma \ref{lem:poundpound}, we can replace ``mutually stable" by ``mutually indiscernible" in the two previous sentences and obtain:

\begin{itemize}
\item[$\boxtimes_{6,\sigma}$] The sequences $I_\alpha+K_\alpha^{<u}$, $K_\alpha^{\geq u}+J_\alpha$, $I_0+K_{0,\sigma}^{<i}$ and $K_{0,\sigma}^{\geq i}+J_0$, where $\alpha$ ranges over $0<\alpha<\mu$, are mutually indiscernible over $Ab$.
\item[$\boxtimes_{7,\sigma}$] The sequences $I_\alpha+K_\alpha^{<v}$, $K_\alpha^{\geq v}+J_\alpha$, $I_0+K_{0,\sigma}^{<i+1}$ and $K_{0,\sigma}^{\geq i+1}+J_0$ where $\alpha$ ranges over $0<\alpha<\mu$, are mutually indiscernible over $Ac$
\end{itemize}

By $\boxtimes_{6,\sigma}$, the family \[(I_\alpha+K_\alpha^{<u}, K_\alpha^{u} : 0<\alpha<\mu){}^\frown (I_0+K_{0,\sigma}^{<i}, K_{0,\sigma}^i)\] has the same type as \[(I_\alpha, J_\alpha : 0<\alpha<\mu){}^\frown (I_0, J_0)\] over $Ab$. In particular, each of $(b,I_\alpha+K_\alpha^{<u},K_\alpha^{u},\phi_\alpha;A)$, $\alpha>0$, and $(b,I_0+K_{0,\sigma}^{<i},K_{0,\sigma}^{i},\phi;A)$ is good and together they have the same type as $(b,I_\alpha,J_\alpha,\phi_\alpha;A)$, $\alpha>0$, and $(b,I_0,J_0,\phi;A)$. In particular, they satisfy properties $\bullet_{2 \to 5}$.

The sequences $I_\alpha+K_\alpha^{<u}+K_\alpha^{u}$, $I_0+K_{0,\sigma}^{<i}+K_{0,\sigma}^{i}$ are mutually indiscernible over $AK^{>i}_{0,\sigma}J_0c$ (we are using $u<v$ here). Then applying $\bullet_3$ (where the $K$'s there are empty), we get that the sequences $(I_\alpha+K_\alpha^{<u},K_\alpha^{u} : 0<\alpha<\mu)$ along with $I_0+K_{0,\sigma}^{<i}$ and $K_{0,\sigma}^i$ are mutually indiscernible over $AK^{>i}_{0,\sigma}J_0 bc$. In particular:

\begin{itemize}
\item[] $I_0+K_{0,\sigma}^{<i}$ is indiscernible over $AK^{>i}_{0,\sigma}J_0bc$.
\end{itemize}

By a symmetric reasoning, interchanging the roles of $b$ and $c$, we get that $(K_\alpha^{v-1},K_\alpha^{\geq v}+J_0 : 0<\alpha<\mu)$ along with $K_{0,\sigma}^{i}$ and $K_{0,\sigma}^{
> i} +J_0$ are mutually indiscernible over $AI_0K^{<i}_{0,\sigma}bc$, and in particular:

\begin{itemize}
\item[] $K_{0,\sigma}^{> i} +J_0$ is indiscernible over $AI_0K^{<i}_{0,\sigma}bc$.
\end{itemize}

Thus $\boxplus$ follows from those two statements.


We can now finish the construction. By $\boxplus$, $(I_0+K_{0,\sigma}^{<i},K_{0,\sigma}^{>i}+J_0)$ and $(I_0,J_0)$ have the same type over $Abc$. As $b,c$ are not $R(\mathbf u)$-comparable, there is a sequence $K_{0,\tau}^{i}$ such that each of $(b,I_0+K_{0,\sigma}^{<i}+K_{0,\tau}^{i},K_{0,\sigma}^{>i}+J_0,\phi)$ and $(c,I_0+K_{0,\sigma}^{<i},K_{0,\tau}^{i}+K_{0,\sigma}^{>i}+J_0,\phi)$ are good. Set $K_{0,\tau}^{j}=K_{0,\sigma}^{j}$ for $j\neq i$. We claim that $\boxtimes_{3,\tau}$ and $\boxtimes_{4,\tau}$ are satisfied. The construction immediately gives $\boxtimes_{3,\tau}$ along with $\boxtimes_{4,\tau}$ where $k$ there is either $u$ or $v$. Take now $k$ different from $u$ and $v$. Assume that $\sigma(k)<i$ (the case $\sigma(k)>i+1$ is similar). Then $\boxtimes_{4,\tau}$ will follow for this value of $k$ if we know that $\neg\phi(a_k, b)$ holds for all $b\in K_{0,\tau}^i$. By the argument that lead to $\boxtimes_{6,\sigma}$, taking $a_k$ instead of $b$, we have that the sequence $K_{0,\sigma}^{i-1}+J_0$ is indiscernible over $Ab$. We also know that $\neg\phi(a_k,b)$ holds for $b$ in that sequence. It follows that $K_{0,\sigma}^{i-1}+K_{0,\tau} ^i+J_0$ is stable over $Ab$ and by Lemma \ref{lem:pound} it is indiscernible over it. Hence $\boxtimes_{4,\tau}$ follows.
\medskip
\\
\underline{Step 2}: Let $n$ be maximal such that there are $a_k$, $k<n$, and sequences $K_\alpha^ k$, $\alpha<\mu$, $k<n-1$, such that $\boxtimes_{0-2}$ above hold and let such elements and sequences be given. If $n=1$, set $b=a$, $A'=AI_{>0}J_{>0}$ and $I'_0=I_0$. If $n>1$, set $b=a_{n-1}$, $A'=AI_{>0}K_{>0}^{<n-1}J_{>0}I_0K_0^{<n-2}a_{<n-1}$ and $I'_0=K_0^{n-2}$. Let $\mathbf v=(b,I'_0,J_0,\phi;A')$. We show that $\mathbf v$ is linear. Assume not, then we can find some $c \equiv_{A'I'_0J_0} b$ such that $b$ and $c$ are $R(\mathbf v)$-incomparable. This means that there are sequences $L_0,L_1$ such that all of $(b,I'_0,L_0+J_0,\phi;A')$, $(c,I'_0+L_0,J_0,\phi;A')$, $(b,I'_0+L_1,J_0,\phi;A')$ and $(c,I'_0,L_1+J_0,\phi;A')$ are good. Then $c$ also satisfies $p_0$. Note that $I_0+L_0+J_0$ is indiscernible over $A$: if $n=1$ this is clear, if $n>1$, then $I'_0+L_0+J_0$ is indiscernible over $A'$, in particular over $AI_0$ and the results follows from the fact that $I_0+I'_0+J_0$ is indiscernible over $A$. The same is true for $I_0+L_1+J_0$. Therefore $c$ is $R(\mathbf u)$-incomparable with $b$, as witnessed by $L_0,L_1$. Since $c$ has the same type as $b$ over $A'I_0J_0$, it is also $R(\mathbf u)$-incomparable to all the $a_k$'s, $k<n-1$.

Construct, for $0<\alpha<\mu$, sequences $K_\alpha^{n-1}$ which are Morley sequences of $\lim(\op(J_\alpha))$ over everything built so far and each other.
%
%
%

We prove by induction on $k$ that for $k<n-1$:
\begin{itemize}
\item[$(P_k)$] $(K_\alpha^{\geq k}+J_\alpha:\alpha<\mu)$, are mutually indiscernible over $AI_{<\mu}K_{<\mu}^{<k}a_{\leq k}$.
\end{itemize}

First, by the argument of $\boxtimes_{6,\sigma}$, for each $k$ we have:
\begin{itemize}
\item[$\boxtimes'_6$] The sequences $(I_\alpha + K_\alpha^{<k}, K_\alpha ^{\geq k}+J_\alpha: \alpha<\mu)$ are mutually indiscernible over $Aa_k$.
\end{itemize}
Taking $k=0$, we obtain $(P_0)$. Assume $(P_{k-1})$ and we show $(P_k)$. The argument is the same as that used to show $\boxplus$ above. The tuples $(a_{k},K^{k-1}_\alpha, K^{\geq k}_\alpha + J_\alpha , \phi_\alpha; A)$, $\alpha<\mu$ have the same type all together as $(a,I_\alpha,J_\alpha,\phi_\alpha; A)$ and hence satisfy $\bullet_{2 \to 5}$. By induction hypothesis, the sequences $K_\alpha^{k-1}+K_\alpha^{\geq k}+J_\alpha$, $\alpha<\mu,$ are mutually indiscernible over $Aa_{\leq k-1}$. By $\bullet_3$, the sequences $(I_\alpha + K_\alpha^{<k}, K_\alpha ^{\geq k}+J_\alpha: \alpha<\mu)$ are mutually stable over $Aa_{\leq k-1}a_k$. By Lemma \ref{lem:poundpound}, those sequences are mutually indiscernible over $Aa_{\leq k}$ and $(P_{k})$ holds.

It follows that the sequences $K_\alpha^{n-2}+K_\alpha^{n-1}+J_\alpha$, $0<\alpha<\mu$ and $K_0^{n-1}+L_0+J_0$ are mutually stable over $AI_{<\mu}K_{<\mu} ^{<n-2}a_{< n}$ (if $n=1$, take $I_\alpha$ instead of $K_\alpha^{n-2}$ and remove $I_{<\mu}$ from the base.) Replacing those sequences by Morley sequences of their limit types if necessary and applying Lemma \ref{lem:poundpound}, we obtain:

\begin{itemize}
\item[$\boxplus'$] The sequences $K_\alpha^{n-2}+K_\alpha^{n-1}+J_\alpha$, $0<\alpha<\mu$ and $K_0^{n-1}+L_0+J_0$ are mutually indiscernible over $AI_{<\mu}K_{<\mu} ^{<n-2}a_{< n}$.
\end{itemize}

%
%
%
%
%
Decompose in an arbitrary non-trivial way $L_0$ as $L_0=L'+L''$. Then by $\boxplus'$, there is a point $d \models p(\mathbf v)$ such that $(d,I_\alpha+K_\alpha^{<n},J_\alpha,\phi;A)$ is good for all $0<\alpha<\mu$, as is $(d,I'_0+L',L''+J_0,\phi;A)$. Then neither $(d,b)$ nor $(c,d)$ are in $R(\mathbf u)$, as witnessed by $L'$ and $L''$ respectively. There is an infinite subsequence $L_1' \subseteq L_1$ such that either $(d,I'_0+L_1',J_0,\phi;A)$ or $(d,I'_0,L_1'+J_0,\phi;A)$ is good. In the first case, $(d,c)$ is not in $R(\mathbf u)$ and $d$ is $R(\mathbf u)$-incomparable with $c$. In this case, set $(a'_{n-1},a_n)=(c,d)$ and $K_0^{n-1}=L_1'$. In the second case, $(b,d)$ is not in $R(\mathbf u)$. Thus $d$ is $R(\mathbf u)$-incomparable with $b$. In that case, set $(a'_{n-1},a_n)=(b,d)$ and $K_0^{n-1}=L'$. Since $d$ satisfies $p(\mathbf v)$ it is also $R(\mathbf u)$-incomparable with each $a_k$, $k<n-1$. In both cases, the sequence $(a_0,\ldots,a_{n-2},a'_{n-1},a_n)$ along with $K_{<\mu}^{<n}$ contradicts the maximality of $n$.

\medskip
\noindent
\underline{Step 3}: We have shown that $\mathbf v=(b,I'_0,J_0,\phi;A')$ is linear. If $n=1$, then we are done. Otherwise, for $0<\alpha<\mu$, build inductively sequences $K_\alpha^{n-2}\unlhd K_\alpha^{n-2}+I'_\alpha$ and $J_\alpha \unlhd J'_\alpha+J_\alpha$, over everything constructed so far. Let also $J'_0=J_0$. Then the tuple $(b,(I'_\alpha,J'_\alpha)_{\alpha<\mu},A)$ has the same type as $(a,(I_\alpha,J_\alpha)_{\alpha<\mu},A)$. Let $\sigma$ be an automorphism sending the first tuple to the second. Then we can take $\sigma(A')$ as the $A'$ we need to finish the proof.
\end{proof}

In the following theorem, by a \emph{linear quasi-order}, we mean a reflexive and transitive relation for which any two points are comparable.

\begin{thm}\label{th:main}
Let $T$ be NIP, $p(x)$ any type with $\dlr(p)\geq \mu$. Then there is an extension $q\supseteq p$ over some set $A$, relations $R_\alpha(x,y)$, $\bigvee$-definable over $A$, such that each $R_\alpha$ defines a linear quasi-order $\leq_\alpha$ with infinite chains on the set of realizations of $q(x)$. Furthermore, those orders are dense and independent in the sense that if $a_\alpha <_\alpha b_\alpha$ are given for $\alpha<\mu$, then there is $c\models q$ such that $a_\alpha <_\alpha c <_\alpha b_\alpha$ for all $\alpha<\mu$.
\end{thm}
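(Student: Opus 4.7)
The strategy is to feed $p$ into the machinery already built in Section~4 and read off the orders directly from the linear good quintuples.

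\medskip
\noindent\textbf{Setup.} Pick any $a\models p$ over its base of parameters $A_0$. Since $\dlr(p)\geq\mu$, the remark following Lemma~\ref{lem_max decomposition} yields a family $(I_\alpha,J_\alpha,\phi_\alpha)_{\alpha<\mu'}$, with $\mu'\geq\mu$, based on some $A_1\supseteq A_0$, so that each $(a,I_\alpha,J_\alpha,\phi_\alpha;A_1)$ is good and conditions $\bullet_{1\to 5}$ of Lemma~\ref{lem_max decomposition} hold. Proposition~\ref{prop_exists linear} then produces $A_2\supseteq A_1$ for which every $\mathbf v_\alpha:=(a,I_\alpha,J_\alpha,\phi_\alpha;A_2)$ is linear. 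Put $A:=A_2\cup\bigcup_{\alpha<\mu'}(I_\alpha\cup J_\alpha)$, $q:=\tp(a/A)$, and $R_\alpha:=R(\mathbf v_\alpha)$; the discussion right before Lemma~\ref{lem_max decomposition} tells us that each $R_\alpha$ is a $\bigvee$-definable linear quasi-order over $A$ on the realizations of $q$.

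\medskip
\noindent\textbf{Infinite chains.} Fix $\alpha<\mu'$. Using the cut-filling technique from Section~2, I would insert between $I_\alpha$ and $J_\alpha$ an indiscernible sequence $K=K^0+K^1+\cdots$ of $\omega$ blocks (each $K^n$ a Morley sequence of an appropriate limit type), so that $I_\alpha+K+J_\alpha$ is indiscernible and mutually indiscernible with the remaining $I_\beta+J_\beta$ over $A_2$. For every $n<\omega$, the partial type expressing $q$ together with ``$\phi_\alpha(x;\cdot)$ holds on $I_\alpha+K^{<n}$'' and ``$\neg\phi_\alpha(x;\cdot)$ holds on $K^{\geq n}+J_\alpha$'' is finitely consistent (an automorphism of $I_\alpha+K+J_\alpha$ fixing the endpoints shifts $a$ itself along the block structure), hence realized by some $a_n\models q$. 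The block $K^n$ then witnesses $a_n<_\alpha a_{n+1}$, giving the desired infinite chain.

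\medskip
\noindent\textbf{Density and independence.} It suffices to prove: given $a_\alpha<_\alpha b_\alpha$ for all $\alpha<\mu'$, there exists $c\models q$ with $a_\alpha<_\alpha c<_\alpha b_\alpha$ for every $\alpha$ (density is the case $\mu=1$, with $a_\alpha=a$, $b_\alpha=b$ single). Each inequality is witnessed by an endless sequence $K_\alpha$ making both $(b_\alpha,I_\alpha+K_\alpha,J_\alpha,\phi_\alpha;A)$ and $(a_\alpha,I_\alpha,K_\alpha+J_\alpha,\phi_\alpha;A)$ good. Replace each $K_\alpha$ by a Morley sequence of a limit type of itself, built inductively so that everything is mutually indiscernible over $A_2$. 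The key step is to apply $\bullet_3$ of Lemma~\ref{lem_max decomposition} successively to absorb the pairs $(a_\alpha,b_\alpha)$ into the base: this upgrades mutual indiscernibility over $A_2$ to mutual stability, and then to mutual indiscernibility (after a further Morley replacement) of the enriched family $(I_\alpha+K_\alpha+J_\alpha)_{\alpha<\mu'}$ over $A\cup\{a_\alpha,b_\alpha:\alpha<\mu'\}$. Choosing any nontrivial decomposition $K_\alpha=K_\alpha^1+K_\alpha^2$, the partial type over that enlarged set which states $q$ together with ``$\phi_\alpha(x;\cdot)$ on $I_\alpha+K_\alpha^1$'' and ``$\neg\phi_\alpha(x;\cdot)$ on $K_\alpha^2+J_\alpha$'' for each $\alpha$ is finitely satisfiable by the goodness of the quintuples and mutual indiscernibility. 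Any realization $c$ works.

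\medskip
The main obstacle I anticipate is precisely the mutual-stability step in the last paragraph: one must be careful that simultaneously adding all the pairs $(a_\alpha,b_\alpha)$ to the parameter set neither destroys the goodness of any $(a,I_\alpha,J_\alpha,\phi_\alpha;\cdot)$ nor the mutual indiscernibility of the $I_\alpha+K_\alpha+J_\alpha$. This is exactly what $\bullet_3$ together with linearity of the $\mathbf v_\alpha$ is designed to deliver, and the iterative application of $\bullet_3$ (one coordinate at a time) is the technical heart of the argument.
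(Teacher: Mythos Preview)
Your setup is exactly the paper's: obtain good quintuples witnessing $\dlr(p)\geq\mu$, pass through Lemma~\ref{lem_max decomposition} and Proposition~\ref{prop_exists linear} to make them linear, and set $q=\tp(a/AI_{<\mu}J_{<\mu})$, $R_\alpha=R(\mathbf v_\alpha)$. So far so good.

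The problem is in your density/independence step, and it is twofold.

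\textbf{First, the claimed conclusion is impossible.} You assert that after suitable Morley replacements one obtains mutual indiscernibility of $(I_\alpha+K_\alpha+J_\alpha)_\alpha$ over $A\cup\{a_\alpha,b_\alpha:\alpha<\mu'\}$. But $\phi_\alpha(a_\alpha;d)$ holds for every $d\in I_\alpha$ and fails for every $d\in K_\alpha$ (this is precisely what $K_\alpha$ witnesses), so $I_\alpha+K_\alpha+J_\alpha$ can never be indiscernible over any set containing $a_\alpha$. The same obstruction applies to $b_\alpha$. No amount of Morley replacement repairs this; any replacement preserving the witnessing role of $K_\alpha$ preserves the truth-value flip. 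Relatedly, your appeal to $\bullet_3$ is not well-posed: $\bullet_3$ speaks about a single element for which \emph{all} the quintuples $(\,\cdot\,,I_\alpha+K_\alpha^0,K_\alpha^1+J_\alpha,\phi_\alpha;A)$ are simultaneously good, with a fixed decomposition. For $a_\beta$ you have no control over where its $\phi_\alpha$-cut lands inside $K_\alpha$ when $\alpha\neq\beta$, so you cannot choose a single decomposition making the hypothesis of $\bullet_3$ hold.

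\textbf{Second, the detour is unnecessary.} The paper's argument stops at the point you already reached: once the $(I_\alpha+K_\alpha+J_\alpha)_\alpha$ are mutually indiscernible over $A$ (your first Morley replacement gives exactly this), split each $K_\alpha=K_\alpha^0+K_\alpha^1$ and observe that, by mutual indiscernibility over $A$, any finite configuration drawn from $(I_\alpha+K_\alpha^0,\,K_\alpha^1+J_\alpha)_\alpha$ has the same type over $A$ as a configuration drawn from $(I_\alpha,J_\alpha)_\alpha$. Hence the partial type
\[
q(x)\ \cup\ \bigcup_\alpha\{\phi_\alpha(x;d):d\in K_\alpha^0\}\ \cup\ \bigcup_\alpha\{\neg\phi_\alpha(x;d):d\in K_\alpha^1\}
\]
is finitely satisfied by $a$ itself (after moving the finitely many $K$-parameters into $I_\alpha$, $J_\alpha$ by an $A$-automorphism), and any realization $c$ gives $a_\alpha<_\alpha c<_\alpha b_\alpha$ via the witnesses $K_\alpha^0$ and $K_\alpha^1$. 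Neither $\bullet_3$ nor linearity nor absorbing the $a_\alpha,b_\alpha$ into the base is used here. Your ``infinite chains'' paragraph is then also superfluous: density together with two $R_\alpha$-inequivalent realizations (which exist by goodness and mutual indiscernibility) already yields infinite chains.
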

\begin{proof}
	Let $a\models p$. By the assumption that $\dlr(p)\geq \mu$, we can find some $\textbf u_\alpha = (a,I_\alpha,J_\alpha,\phi_\alpha;A)$ which are good and such that the sequences $I_\alpha+J_\alpha$ are mutually indiscernible over $A$. Using then Lemma \ref{lem_max decomposition} and Proposition \ref{prop_exists linear} and replacing $A$ by $A'$ there, we can assume that all the $\textbf u_\alpha$ are linear. Let then $q=\tp(a/AI_{<\mu} J_{<\mu})$ and $R_\alpha=R(\mathbf u_\alpha)$.
	
	To see that the independence condition holds, let $a_\alpha,b_\alpha\models q$ be given with $\neg R_\alpha(b_\alpha,a_\alpha)$ (that is $a_\alpha <_\alpha b_\alpha$). For each $\alpha<\mu$, let $K_\alpha$ be such that both $(a_\alpha, I_\alpha,K_\alpha +J_\alpha,\phi;A)$ and $(b_\alpha,I_\alpha+K_\alpha,J_\alpha,\phi;A)$ are good. The sequences $(I_\alpha+J_\alpha:\alpha<\mu)$ are mutually indiscernible over $A$, hence the sequences $(I_\alpha+K_\alpha+J_\alpha:\alpha<\mu)$ are mutually stable over $A$. Replacing the sequences $K_\alpha$ by Morley sequences of their limit types, we can assume that the sequences $(I_\alpha+K_\alpha+J_\alpha : \alpha<\mu)$ are mutually indiscernible over $A$. Decompose each $K_\alpha$ into two infinite pieces as $K_\alpha = K^0_\alpha+K^1_\alpha$. By mutual indiscernability, there is $c\models q$ such that all $(c,I_\alpha+K^0_\alpha,K^1_\alpha+J_\alpha,\phi;A)$ are good. We then have $a_\alpha<_\alpha c <_\alpha b_\alpha$. This proves both density and independence.
	
	Note that if we carry out this construction without $b_\alpha$, we get $c$ such that $a_\alpha <_\alpha c$ for each $\alpha$. This proves that for each order $<_\alpha$ there are two realizations of $q$ which are strictly comparable. Then by density, each order $<_\alpha$ has infinite chains on realizations of $q(x)$.
\end{proof}

In the case where $\mu$ in the above theorem is finite, then we can modify this result to have $A$ be finite, at the cost of weakening independence. This boils down to a simple compactness argument. However, we first need to change slightly the notion of linearity. The reason is that with the previous definitions, we considered sequences $K$ with a certain type over $AIJ$, hence the base contains $I$ and $J$ and is thus always infinite. In order to be able to have the base be finite, we need to remove $I$ and $J$ from it. To this end, we introduce the following definitions.

\ignore{
\begin{definition}
A quintuple $\mathbf u = (\pi(x),I,J,\phi;A)$ is \emph{minimal$_0$} if:

$\bullet$ it is good;

$\bullet$ whenever $a\models \pi(x)$ and $L$  has the same EM-types as $I$ over $A$, if $L_0=\{c\in K: \models \phi(a;c)\}$ and $L_1=\{c\in K:\models \neg\phi(a;c)\}$, then $L=L_0+L_1$.
\end{definition}

}

\begin{definition}
	Let $\mathbf u = (\pi(x),I,J,\phi;A)$ be good. We define the following binary relations on realizations of $\pi(x)$:
	
	$\bullet$ $(a,b)\in E_0(\mathbf u)$ if for any infinite sequence $L$ of same EM-type as $I$ over $A$, if $\phi(a;e)$ holds for all $e\in L$, then $\phi(b;e)$ holds for almost all $e\in L$ and symmetrically if $\neg \phi(a;e)$ holds for all $e\in L$, then $\neg\phi(b;e)$ holds for almost all $e\in L$;
			
	$\bullet$ $(a,b)\in R_0(\mathbf u)$ if we cannot find an infinite sequence $L$ of same EM-type as $I$ over $A$ such that both $\phi(a;e)$ and $\neg\phi(b;e)$ holds for all $e\in L$.
\end{definition}
	
We have as previously that $E_0(\mathbf u)$ is a $\bigvee$-definable equivalence relation on realizations of $\pi(x)$, that $R_0(\mathbf u)$ is a $\bigvee$-definable reflexive, transitive, $E_0(\mathbf u)$-equivariant, relation. Furthermore we have $E_0(a,b) \iff R_0(a,b)\wedge R_0(b,a)$.

We say that $\mathbf u$ is linear$_0$ if any two realizations of $p(\mathbf u)$ are $R_0(\mathbf u)$-comparable.

The following properties follow at once from the definitions.

\begin{lemma}\label{lem:normal to 0}
Let $\mathbf u=(a,I,J,\phi;A)$ be good and build $\mathbf u' = (a,I',J',\phi;AIJ)$. where $I \unlhd_{AIJa} I+I'$ and $J\unlhd_{AII'Ja} J'+J$.

Then:
\begin{enumerate}
\item $\mathbf u'$ is good;
\item $E(\mathbf u)=E_0(\mathbf u')$ and $R(\mathbf u)=R_0(\mathbf u')$ on realizations of $p(\mathbf u')$;
\item if $\mathbf u$ is linear, then $\mathbf u'$ is linear$_0$.
\end{enumerate}
\end{lemma}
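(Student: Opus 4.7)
The plan is to verify the three clauses in order; everything reduces to unpacking the definitions once a single observation about EM-types is made.

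For (1), I would run through the clauses of goodness for $\mathbf u'$. By the standard properties of $\unlhd$, building $I'$ with $I\unlhd_{AIJa} I+I'$ and then $J'$ with $J\unlhd_{AIJa} J'+J$ over the enlarged base yields that $I+I'+J'+J$ is $A$-indiscernible (each of $I'$, $J'$ being Morley of the appropriate limit type). This in turn forces $I'+J'$ to be $AIJ$-indiscernible: any tuple from $I'+J'$ together with parameters chosen from $IJ$ forms a tuple inside $I+I'+J'+J$ whose type over $A$ is fixed by the order type, and the order of $IJ$-parameters against $I'+J'$-elements is determined by which side they lie on. To check the $\phi$-conditions, note that $\phi(a;d)$ holds for every $d\in I$, so $\phi(a;y)\in\lim(I)$; since $I'$ is a Morley sequence of $\lim(I)$ over $AIJa$, every element of $I'$ satisfies $\phi(a;y)$. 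Symmetrically, $\neg\phi(a;y)\in\lim(\op J)$ gives $\neg\phi(a;e)$ throughout $J'$.

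For (2), the central observation — which I would state as a small separate claim — is that a sequence $L$ has the same EM-type as $I'$ over $AIJ$ if and only if $I+L+J$ is $A$-indiscernible. One direction is that the EM-type over $AIJ$ determines $\tp(e_1,\ldots,e_n/A\cup\{c_1,\ldots,c_m\})$ for any $c_i\in IJ$, hence pins down the type over $A$ of every tuple from $I+L+J$ (matching the type from $I+I'+J$); the converse is a dual unpacking using that the relative order of $IJ$-parameters with respect to any $e_i\in L$ is fixed. Now for $a,b\models p(\mathbf u')$, the conditions $\phi(a;d), \neg\phi(a;d')$ for $d\in I, d'\in J$, and similarly for $b$, are automatic, so $\neg R(\mathbf u)(a,b)$ and $\neg R_0(\mathbf u')(a,b)$ both reduce to: \emph{there exists an infinite endless $K$ with $I+K+J$ $A$-indiscernible and $\phi(a;e)\wedge\neg\phi(b;e)$ for all $e\in K$}. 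By the EM-type equivalence, these witness sets coincide, so $R(\mathbf u)=R_0(\mathbf u')$ on $p(\mathbf u')$. The equality $E(\mathbf u)=E_0(\mathbf u')$ then follows from $E=R\cap R^{-1}$ and the analogous $E_0=R_0\cap R_0^{-1}$ (the nontrivial direction: if $\phi(a;e)$ holds on $L$ but $\neg\phi(b;e)$ infinitely often, extract an infinite subsequence $L'\subseteq L$ with $\neg\phi(b;e)$ throughout — $L'$ inherits the EM-type of $I'$ over $AIJ$ — contradicting $R_0(a,b)$).

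For (3), a realization of $p(\mathbf u')=\tp(a/AIJI'J')$ is, after restriction, a realization of $p(\mathbf u)=\tp(a/AIJ)$. So if $\mathbf u$ is linear, any two realizations of $p(\mathbf u')$ are $R(\mathbf u)$-comparable, hence $R_0(\mathbf u')$-comparable by (2); combined with the goodness of $\mathbf u'$ from (1), this gives linearity$_0$. The only place where there is anything beyond routine bookkeeping is the EM-type/indiscernibility equivalence in step (2), and even that is a direct consequence of how mixed tuples decompose into $L$-parts and $IJ$-parts.
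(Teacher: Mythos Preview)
Your proof is correct and carefully unpacks exactly what the paper leaves implicit: the paper's own ``proof'' is simply the sentence ``The following properties follow at once from the definitions.'' Your key observation --- that an endless $L$ has the same EM-type as $I'$ over $AIJ$ if and only if $I+L+J$ is $A$-indiscernible --- is precisely the translation device that makes (2) transparent, and (1) and (3) are routine once this is in place.
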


The following lemma is true with linear instead of linear$_0$, but is truly useful only in the latter case.

\begin{lemma}\label{lem_reduce to finite base}
If $\mathbf u=(\pi(x),I,J,\phi;A)$ is linear$_0$ then there is a finite $A_0\subseteq A$ and a formula $\theta(x)\in \pi(x)$ such that $(\theta(x),I,J,\phi;A_0)$ is linear$_0$.
\end{lemma}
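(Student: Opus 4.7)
The plan is a compactness argument that encodes the failure of linearity$_0$ as the consistency of a single partial type, then passes to a finite inconsistent subtype to extract the finite base $A_0$ and the formula $\theta$. Concretely, let $\bar z = (z_n)_{n \in \mathbb Z}$ and let $\tau(\bar z)$ abbreviate the EM-type of $I$ over $A$: for every $\psi(y_1,\ldots,y_k) \in L(A)$ and every $n_1 < \cdots < n_k$, the formula asserting $\psi(z_{n_1},\ldots,z_{n_k})$ has exactly the truth value $\psi$ takes on increasing tuples from $I$. Consider the partial type $\Sigma(x,y,\bar c^{\,1},\bar c^{\,2})$ consisting of $\pi(x) \cup \pi(y)$, together with $\tau(\bar c^{\,1}) \cup \tau(\bar c^{\,2})$, the formulas $\phi(x;c^1_n) \wedge \neg\phi(y;c^1_n)$ for each $n \in \mathbb Z$, and $\phi(y;c^2_n) \wedge \neg\phi(x;c^2_n)$ for each $n \in \mathbb Z$. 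A realization of $\Sigma$ would supply $a,b \models \pi$ and endless indiscernible sequences $L_1, L_2$ of the same EM-type as $I$ over $A$ witnessing that neither $(a,b)$ nor $(b,a)$ lies in $R_0(\mathbf u)$; since $\mathbf u$ is linear$_0$, $\Sigma$ is therefore inconsistent.

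Next, by compactness I fix a finite inconsistent $\Sigma_0 \subseteq \Sigma$. Let $\theta(x) \in \pi$ be the conjunction of the finitely many $\pi$-formulas appearing in $\Sigma_0$, and let $A_0$ be a finite subset of $A$ containing the parameters of $\phi$, the $A$-parameters of $\theta$, and all parameters from $A$ occurring in the EM-type formulas used by $\Sigma_0$. The quintuple $(\theta(x),I,J,\phi;A_0)$ is good: $I+J$ remains $A_0$-indiscernible since $A_0 \subseteq A$, $\phi$ lies in $L(A_0)$, $\theta$ is a formula over $A_0 \cup I \cup J$ by construction, and any witness to goodness of $\mathbf u$ realizes $\theta$ since $\theta \in \pi$.

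Finally, I claim $(\theta(x),I,J,\phi;A_0)$ is linear$_0$. If not, there would exist $a,b \models \theta$ and endless sequences $L_1, L_2$ of the same EM-type as $I$ \emph{over $A_0$} with $\phi(a;e) \wedge \neg\phi(b;e)$ on $L_1$ and $\phi(b;e) \wedge \neg\phi(a;e)$ on $L_2$. Selecting from $L_1, L_2$ elements indexed by the finitely many integers appearing in $\Sigma_0$, the pair $(a,b)$ together with these elements would realize $\Sigma_0$: the $\pi$-clauses reduce to $\theta(a) \wedge \theta(b)$; the $\phi$-clauses hold by choice; and the EM-type clauses in $\Sigma_0$ have parameters in $A_0$, so the prescribed truth values (those attained on $I$) are precisely those guaranteed by same-EM-type-as-$I$-over-$A_0$. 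This contradicts inconsistency of $\Sigma_0$. The main subtlety is exactly this change of base: shrinking $A$ to $A_0$ enlarges the pool of admissible witnesses $L_1, L_2$, which a priori makes linearity$_0$ harder to achieve, but every formula in $\Sigma_0$ has parameters already in $A_0$ and is therefore sensitive only to the weaker EM-type-over-$A_0$ condition, closing the loop.
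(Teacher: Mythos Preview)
Your proof is correct and takes essentially the same approach as the paper: both argue that the failure of linearity$_0$ is a type-definable condition on $(a,b,L_1,L_2)$, so its inconsistency is witnessed by a finite subtype, from which $A_0$ and $\theta$ are read off. The paper's proof is just a terse two-sentence version of the compactness argument you have spelled out in full, and your explicit handling of the base-change subtlety (that shrinking $A$ to $A_0$ enlarges the class of admissible $L$'s, but this is harmless because every clause of $\Sigma_0$ already lives over $A_0$) makes the point the paper leaves implicit.
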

\begin{proof}

Saying that $(\pi(x),I,J,\phi;A)$ is linear$_0$ is saying that it is good and that one cannot find two infinite sequences $L=(c_i:i<\omega)$ and $K=(d_i:i<\omega)$, both having the same EM-type as $I$ over $A$ such that for $i<\omega$,
\[ \models \phi(a;c_i)\wedge \neg \phi(a';c_i)\wedge \neg \phi(a;d_i) \wedge \phi(a';d_i).\]

By compactness, one can find a finite $A_0\subseteq A$ and formula $\theta(x)\in \pi(x)$ such that this also holds with $A$ replaced by $A_0$ and $\pi(x)$ replaced by $\theta(x)$. Then $(\theta(x),I,J,\phi;A_0)$ is also good and is linear$_0$.
\end{proof}

Note that the goodness hypothesis is not needed for the compactness argument, but is necessary to ensure that the order we construct is not trivial.
\medskip

We now state our main theorem. We only state the result with finite bases for one order, to simplify the statement. The $n$-order version is below, stated under the $\omega$-categorical assumption.

\begin{thm}\label{th:mainfinite}
	Let $T$ be NIP, unstable. Then there is a finite set $A$, a formula $\theta(x)$ over $A$ and a relation $R(x,y)$ $\bigvee$-definable over $A$ which defines a dense linear quasi-order on $\theta(x)$ with an infinite chain.
\end{thm}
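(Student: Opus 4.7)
Since $T$ is unstable, some formula $\phi(x;y)$ has the order property; extracting an indiscernible sequence $I+J$ and a tuple $a$ with $\phi(a;\cdot)$ switching truth value at the cut yields a good quintuple $(a, I, J, \phi; \emptyset)$, so $\dlr(\tp(a/IJ)) \geq 1$. The plan is to apply Theorem \ref{th:main} with $\mu = 1$ and then bring the (a priori infinite) parameter set down to a finite one by combining Lemmas \ref{lem:normal to 0} and \ref{lem_reduce to finite base} with the compactness remark preceding the theorem statement.

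Concretely, Theorem \ref{th:main} with $\mu = 1$ gives a linear good quintuple $\mathbf v = (a, I_1, J_1, \phi; A)$ (with $A$ possibly infinite) such that $R(\mathbf v)$ is a dense linear quasi-order with infinite chain on realizations of $q := p(\mathbf v)$. Lemma \ref{lem:normal to 0} then produces $\mathbf v' = (a, I_2, J_2, \phi; AI_1J_1)$ which is linear$_0$, with $R_0(\mathbf v') = R(\mathbf v)$ on realizations of $p(\mathbf v')$. Lemma \ref{lem_reduce to finite base} applied to $\mathbf v'$ yields a finite $A_0 \subseteq AI_1J_1$ and a formula $\theta \in p(\mathbf v')$ such that $\mathbf w := (\theta, I_2, J_2, \phi; A_0)$ is linear$_0$. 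Letting $\bar A := A_0 \cup \mathrm{parameters}(\theta)$ (a finite set), the formula $\theta$ is over $\bar A$, and the relation $R := R_0(\mathbf w)$ is $\bigvee$-definable over $A_0 \subseteq \bar A$ because its defining condition only refers to the EM-type of $I_2$ over $A_0$, which consists of $L(A_0)$-formulas.

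Linearity of $R$ on $\theta$-realizations is exactly the linear$_0$ property; reflexivity and transitivity hold formally. The main obstacle is establishing density and the infinite chain for $R$ on $\theta$. The key observation is the inclusion $R_0(\mathbf w) \subseteq R(\mathbf v)$: any witness $K$ to $\neg R(\mathbf v)$ forces $I_1 + K + J_1$ to be indiscernible over $A$, so $K$ has the EM-type of $I_2$ over $A_0 \subseteq A$ (since $I_1 + I_2$ is $A$-indiscernible by the construction $I_1 \unlhd_{AI_1J_1a} I_1 + I_2$), and the same $\phi$-conditions then show $K$ also witnesses $\neg R_0(\mathbf w)$. Combined with linearity of both $R(\mathbf v)$ on $q$ and $R$ on $\theta$, this inclusion of quasi-orders inverts on strict orders restricted to the common domain $q \cap \theta$, so that $R(\mathbf v)$-chains in $q \cap \theta$ are automatically $R$-chains in $\theta$; density is obtained similarly, by adapting the midpoint construction from the end of the proof of Theorem \ref{th:main} (splitting a witness $L$ in half and realizing a midpoint in $\theta$ via saturation). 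The remaining technical point — that enough $q$-realizations also lie in $\theta$ — is handled by the compactness/automorphism argument alluded to in the paper: since $\theta$ is a single formula with only finitely many parameters (possibly including some from $I_2J_2$), automorphisms of the monster fixing $AI_1J_1$ together with those finitely many parameters transport $R(\mathbf v)$-chains and density witnesses in $q$ into $\theta \cap q$, completing the proof.
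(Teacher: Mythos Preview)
Your overall strategy is exactly the paper's: obtain a linear good quintuple (via Proposition~\ref{prop_exists linear}, or equivalently Theorem~\ref{th:main} with $\mu=1$), pass to linear$_0$ via Lemma~\ref{lem:normal to 0}, then reduce to a finite base via Lemma~\ref{lem_reduce to finite base}. The paper's proof stops there with ``$R=R_0(\mathbf u_0)$ is as required''; you go further and try to justify density and the infinite chain explicitly. That is where your argument breaks.

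The problem is that Lemma~\ref{lem_reduce to finite base}, applied to $\mathbf v'$, produces $\theta\in p(\mathbf v')=\tp(a/AI_1J_1I_2J_2)$, so $\theta$ may carry parameters from $I_2\cup J_2$, and realizations of $q=p(\mathbf v)=\tp(a/AI_1J_1)$ need \emph{not} satisfy $\theta$. Your infinite-chain argument needs an $R(\mathbf v)$-chain inside $q\cap\theta$, but Theorem~\ref{th:main} only supplies one in $q$. Your density argument splits a witness $L$ and seeks a midpoint $c\models\theta$ ``via saturation'', but the only automorphism sending $(I_2,J_2)$ to $(L_0,L_1)$ is over $A_0$, not over the extra parameters of $\theta$. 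The automorphisms you then invoke --- fixing $AI_1J_1$ \emph{together with} those finitely many parameters of $\theta$ --- fix both $q$ and $\theta$ setwise, so they cannot move anything from $q\setminus\theta$ into $q\cap\theta$, nor can they send $(I_2,J_2)$ to an arbitrary $(L_0,L_1)$. The clean repair is to observe that $R(\mathbf v)$ coincides with $R_0(\mathbf v')$ on all of $p(\mathbf v)$, not just on $p(\mathbf v')$: any $L$ with the EM-type of $I_2$ over $AI_1J_1$ automatically makes $I_1+L+J_1$ $A$-indiscernible, so a witness for $\neg R_0(\mathbf v')$ is already a witness for $\neg R(\mathbf v)$ once the endpoints realize $p(\mathbf v)$. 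Hence $(p(\mathbf v),I_2,J_2,\phi;AI_1J_1)$ is itself linear$_0$, and applying Lemma~\ref{lem_reduce to finite base} to \emph{that} quintuple yields $\theta\in p(\mathbf v)$ with all parameters in $AI_1J_1$, which can be absorbed into $A_0$. Then every realization of $q$ satisfies $\theta$, the infinite chain transfers directly, and for density an $A_0$-automorphism sending $(I_2,J_2)$ to $(L_0,L_1)$ carries the good witness for $\mathbf w$ to the required midpoint in $\theta$.
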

\begin{proof}
	As $T$ is unstable, the op-dimension of $x=x$ is at least 1 and we can find some good quintuple $\mathbf u=(a,I,J,\phi;A)$. By Proposition \ref{prop_exists linear}, up to increasing $A$, we can take $\mathbf u$ to be linear. By Lemma \ref{lem:normal to 0}, increasing $A$ some more, we can assume that $\mathbf u$ is linear$_0$. Then by Lemma \ref{lem_reduce to finite base}, we get some $\mathbf u_0=(\theta(x),I,J,\phi;A_0)$ which is linear$_0$, with $A_0$ finite. Then $R=R_0(\mathbf u_0)$ is as required.
\end{proof}

\begin{thm}\label{th:mainomega}
If the theory $T$ is $\omega$-categorical, NIP, $\dlr(x=x)\geq n>0$, then there is a finite set $A_*$, an $A_*$-definable set $X$ and $n$ $A_*$-definable linear quasi-orders $\leq_1,\ldots,\leq_n$ on $p$, such that the structure $(X;\leq_1,\ldots,\leq_n)$ contains an isomorphic copy of every finite structure $(X_0;\leq_1,\ldots,\leq_n)$ equipped with $n$ \emph{linear orders}.
\end{thm}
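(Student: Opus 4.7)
The plan is to run the proof of Theorem \ref{th:mainfinite} simultaneously for all $n$ orders, then invoke $\omega$-categoricity to upgrade the resulting $\bigvee$-definable relations to genuinely definable ones, and finally extract universality from the density-and-independence property built into Theorem \ref{th:main}.

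First, using $\dlr(x=x) \geq n$, I pick a realization $a$ and a family $\mathbf u_\alpha = (a, I_\alpha, J_\alpha, \phi_\alpha; A)$, $\alpha<n$, of good quintuples with $(I_\alpha + J_\alpha : \alpha < n)$ mutually $A$-indiscernible. Applying Lemma \ref{lem_max decomposition} and then Proposition \ref{prop_exists linear}, I enlarge $A$ so that every $\mathbf u_\alpha$ is linear. I then apply Lemma \ref{lem:normal to 0} in parallel to all $\alpha$, this time working with the common base $B := A I_{<n} J_{<n}$ (choosing Morley-type extensions $I'_\alpha, J'_\alpha$ over $Ba$), so that each $\mathbf u'_\alpha := (a, I'_\alpha, J'_\alpha, \phi_\alpha; B)$ is good over $B$ and linear$_0$. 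Lemma \ref{lem_reduce to finite base} then shrinks each base to a finite $A^\alpha_* \subseteq B$ with a formula $\theta_\alpha \in \tp(a/B)$; setting $A_* := \bigcup_\alpha A^\alpha_*$ and $\theta := \bigwedge_\alpha \theta_\alpha$ yields linear$_0$ quintuples over a common finite base with a common formula.

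Now I invoke $\omega$-categoricity: since $A_*$ is finite, the Stone spaces $S_1(A_*), S_2(A_*)$ are finite and every type over $A_*$ is isolated. Let $p(x) := \tp(a/A_*)$. Each $R_0(\mathbf u'_\alpha)$ is $\bigvee$-definable over $A_*$, hence $\aut(\monster/A_*)$-invariant; restricted to $p \times p$ it is therefore a finite union of (isolated) $2$-types over $A_*$, and hence definable by a single formula $\leq_\alpha$ over $A_*$, inheriting reflexivity, transitivity, and linearity from $R_0(\mathbf u'_\alpha)$. For universality, given $(X_0 = \{x_1, \ldots, x_k\}; \leq_1, \ldots, \leq_n)$ with $n$ linear orders, I would build realizations $c_1, \ldots, c_k \models q := \tp(a/B)$ (hence in $p(\monster)$, since $q \vdash p$) one at a time: at each step, the simultaneous density on $q(\monster)$ provided by Theorem \ref{th:main} lets me place $c_{i+1}$ strictly between $\max L_\alpha$ and $\min U_\alpha$ in each $\leq_\alpha$, where $L_\alpha, U_\alpha$ are the predecessors and successors of $x_{i+1}$ among $\{x_1, \ldots, x_i\}$ prescribed by $X_0$; the infinite-chain property of $q$ together with $\aut(\monster/B)$-homogeneity of $q$ (which rules out endpoints on any single orbit) handles the extremal cases where $L_\alpha$ or $U_\alpha$ is empty.

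The main technical point is the parallel bookkeeping across all $n$ quintuples: one must preserve mutual indiscernibility of the original sequences $I_\alpha+J_\alpha$, perform the linear-to-linear$_0$ conversion over a common base (making each $I'_\alpha, J'_\alpha$ indiscernible over \emph{all} of $B$, not just over $A I_\alpha J_\alpha$), and reduce to finite bases without breaking the goodness of any $\mathbf u'_\alpha$. Once this is handled, the $\omega$-categorical upgrade of $\bigvee$-definability and the inductive embedding into $q(\monster)$ are routine.
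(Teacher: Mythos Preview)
Your proposal is correct and follows essentially the same route as the paper's proof: build $n$ good quintuples, make them linear via Lemma~\ref{lem_max decomposition} and Proposition~\ref{prop_exists linear}, convert to linear$_0$ via Lemma~\ref{lem:normal to 0}, shrink to a finite base via Lemma~\ref{lem_reduce to finite base}, and then read off universality from the independence in Theorem~\ref{th:main}. You are more explicit than the paper in two places---the use of $\omega$-categoricity to turn the $\bigvee$-definable $R_0$ into an honest $A_*$-definable relation, and the inductive embedding argument for universality (including the no-endpoints observation for the extremal cases)---but these are exactly the details the paper leaves implicit.
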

\begin{proof}
	This is similar to Theorem \ref{th:mainfinite} except that we start with $n$ good quintuples $\mathbf u_k = (a,I_k,J_k,\phi;A)$, where $(I_k+J_k:k<n)$ are mutually indiscernible over $A$. By Proposition \ref{prop_exists linear}, we can assume that they are linear. By the same argument as in the proof of Theorem \ref{th:main}, the order $R(\mathbf u_k)$ are independent.
	
	As in Lemma \ref{lem:normal to 0}, we can successively construct $I_k \unlhd I_k+I'_k$ and $J_k \unlhd J'_k+J_k$, each time over everything constructed so far. Then replacing $I_k$ by $I'_k$, $J_k$ by $J'_k$ and $A$ by $AI_{<n}J_{<n}$ we can assume that the quintuples are linear$_0$. Note that after having done this substitution, the sequence $(I_k+J_k:k<n)$ are still mutually indiscernible over $A$. Lemma \ref{lem_reduce to finite base} then gives us linear$_0$ quintuples $\mathbf u^0_k = (\theta_k(x),I_k,J_k,\phi;A_k)$, with $A_k$ finite. Let $A_*=\bigcup_{k<n} A_k$ and $\theta(x)=\bigwedge_{k<n} \theta_k(x)$. Then each $\mathbf u^1_k:=(\theta(x),I_k,J_k,\phi;A_*)$ is linear$_0$ and we define the order $\leq_k$ to be given by the relation $R_0(\mathbf u^1_k)$.
	
	Note that if $(b,a)\notin R(\mathbf u_k)$, then also $(b,a)\notin R_0(\mathbf u^1_k)$ witnessed by the same sequences, and thus $a<_k b$. The statement about universality therefore follows from the independence of the orders $R(\mathbf u_k)$.
\end{proof}
\subsection{Theories with no interpretable linear order}

Having found a linear order, the natural next step would be to understand the induced structure on it. When the order is interpretable, this becomes an instance of the classical problem of studying NIP ordered structures. The dp-minimal case in particular has received some attention (see e.g. \cite{GoodMon}, \cite{dpmin}), though most results assume an ordered-group structure. The $\omega$-categorical case is considered in \cite{rank_one}. However, we expect that more often than not, the order we constructed will be strictly $\bigvee$-definable. It seems likely that one could actually take advantage of it as the non-definability limits the possibilities for the induced structure. We give an example of that here and leave further studies for later. We show that if the theory does not interpret any infinite linear order, then in some precise sense, the induced structure on the $\bigvee$-definable quotient is weakly o-minimal.

\smallskip
We work in a general context not relying on the previous notations. Let $D$ be a definable set over some $A$ and $S(x,y)$ a $\bigwedge$-definable relation over $A$ such that $S(x,y)\to D(x)\wedge D(y)$ and $S(x,y)$ is a strict linear quasi-order with infinite chains on $D$ that is:

-- $S$ is transiitve and anti-reflexive;

-- $E(x,y):=\neg S(x,y)\wedge \neg S(y,x)$ is (a $\bigvee$-definable) equivalence relation such that $S$ is $E$-equivariant;

-- $S$ induces an infinite strict linear order on the quotient $D/E$.

\smallskip\noindent
Assume also that $S$ is type-definable by a countable conjunction of formulas. (Note that we can always ensure this in our construction since we end up with a finite $A$ and we can work in a reduct to a countable sublanguage.)

We can then write $S(x,y)=\bigwedge_{i<\omega} S_i(x,y)$ such that:

$\bullet$ $S_0(x,y)\to D(x)\wedge D(y)$;

$\bullet$ $\neg (\exists x,y) S_0(x,y)\wedge S_0(y,x)$;

$\bullet$ $(\forall x,y) S_{i+1}(x,y)\to S_i(x,y)$;

$\bullet$ $(\forall x,y) S_{i+1}(x,y)\wedge S_{i+1}(y,z)\to S_i(x,z)$.

\smallskip
\noindent
Note that for all $i<\omega$, we have:
\[ (\forall x,y)\left [ (S(x,y)\wedge S_i(y,z)) \to S(x,z)\right ].\]

Why? Assume that $\neg S(x,z)$ and $S(x,y)$ hold. Then as $S$ is linear, we must have $S(z,y)$, hence $S_0(z,y)$ holds. By the second bullet above, this implies $\neg S_0(y,z)$ and hence $\neg S_i(y,z)$.

In particular:
\[(\square) \quad (\forall x,y)\left [ (S(x,y)\wedge S_i(y,z)) \to S_i(x,z)\right ].\]

Now for $i<\omega$, define $x \leq_i y$ as:
\[a\leq_i b \iff (\forall c\in D) S_i(b,c)\to S_i(a,c).\]

Then $\leq_i$ is a transitive, reflexive relation and by $(\square)$, for $a,b\in D$ we have \[S(a,b)\to a\leq_i b \to \neg S(b,a).\]

We also define $a<_i b \iff (a\leq_i b) \wedge \neg (b\leq_i a)$. This is a transitive, irreflexive relation.

\begin{prop}
Assume that there is a definable subset $X\subseteq D$ whose projection on $D/E$ is not a finite union of convex sets. Then there is an infinite interpretable linear order.
\end{prop}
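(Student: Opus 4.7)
The approach is to combine the alternating structure implied by the hypothesis with the definable approximations $S_i$ to produce an interpretable infinite linear order directly on a definable quotient of $X$.

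First I use the hypothesis: since the projection of $X$ to $D/E$ is not a finite union of convex sets, for every $n$ there exist $a_1,\dots,a_n$ in $D$ with $S(a_j,a_{j+1})$ alternating in and out of $X$ (say $a_j\in X$ iff $j$ is even). These are pairwise $E$-inequivalent, and since $S\to S_i$ for every $i$, the same chains witness analogous $S_i$-alternations.

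Next, for each $i$, I consider the definable binary relation on $X$
\[
a\preceq_i b \;:=\; X(a)\wedge X(b)\wedge \forall c\;\bigl((\neg X(c)\wedge S_i(b,c))\to S_i(a,c)\bigr),
\]
and let $\sim_i$ be its symmetrization. Both are definable over the parameters defining $D$, $X$ and $S_i$. The relation $\preceq_i$ is a preorder inducing a partial order on $X/\sim_i$. I expect the correct setting to view this as an interpretable order on $X/\sim_i$, using $\neg X$ as a definable ruler that distinguishes $\sim_i$-classes.

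I then verify two key facts. First, for $S$-comparable $a,b\in X$, property $(\square)$ forces $a\preceq_i b$ whenever $S(a,b)$, so $S$-comparable pairs in $X$ are always $\preceq_i$-comparable. Second, the $X$-elements $a_1,a_3,a_5,\dots$ of the alternating chain form a strict $\prec_i$-chain: for $k<l$ we have $a_{2k+1}\preceq_i a_{2l+1}$ by the first fact, while the element $c=a_{2k+2}\in\neg X$ witnesses the strict inequality in the other direction, since $S(a_{2k+1},a_{2k+2})$ gives $S_i(a_{2k+1},a_{2k+2})$ whereas $S(a_{2k+2},a_{2l+1})$ combined with antisymmetry of $S_0$ (and $S_i\to S_0$) yields $\neg S_i(a_{2l+1},a_{2k+2})$. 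Hence $X/\sim_i$ carries an infinite strict $\prec_i$-chain.

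The remaining step, and the main obstacle I anticipate, is to promote $\prec_i$ to a genuine linear (quasi-)order on $X/\sim_i$. The potential obstruction comes from pairs $a,b\in X$ with $E(a,b)$ whose $S_i$-behaviour toward $\neg X$ could differ, leaving them $\preceq_i$-incomparable at finite $i$. The natural remedy is either to restrict to a definable subset of $X$ on which the $S_i$-relation to $\neg X$ is forced to factor through $E$, or to invoke a compactness/NIP argument with $S=\bigwedge_j S_j$ so that any such discrepancy is absorbed into $\sim_i$ for large enough $i$ (using the composition property $S_{i+1}\circ S_{i+1}\subseteq S_i$ to control iterates). Once this is handled, $(X/\sim_i,\prec_i)$ gives the required infinite interpretable linear order.
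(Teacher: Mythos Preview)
Your setup is essentially dual to the paper's: you compare elements of $X$ using points of $D\setminus X$ as rulers, while the paper compares points \emph{outside} $X$ (up to $E$) using $X$ as the ruler. Either direction can work in principle, but the step you flag as ``the main obstacle'' is exactly the content of the proof, and you have not carried it out. Your preorder $\preceq_i$ is only a partial order on $X/\sim_i$; an infinite chain in a definable partial order does not yield an interpretable infinite \emph{linear} order. Your two proposed remedies are not viable as stated: there is no obvious definable subset of $X$ to cut down to, and no compactness argument will turn the family $(\preceq_i)_i$ into a single definable linear order, since each $\preceq_i$ is already definable and the problem is comparability, not approximation.

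The paper's key idea is to secure linearity \emph{by choice of domain}. One defines $F=\{x\in D:\ (\forall y\in X)\ x<_0 y\ \vee\ y<_0 x\}$ and, for $a\in F$, the cut $X(a)=\{x\in X: a<_0 x\}$. On $F$ the sets $X(a)$ are automatically linearly ordered by inclusion: if $x\in X(a)\setminus X(b)$ then $a<_0 x$ and, since $b\in F$, necessarily $x<_0 b$; a symmetric $y\in X(b)\setminus X(a)$ would give $a<_0 x<_0 b<_0 y<_0 a$, contradicting irreflexivity of $<_0$. The alternating chain from the hypothesis supplies infinitely many elements of $F$ (each $a_k$ whose $E$-class misses $X$ lies in $F$, since it is $S$-comparable, hence $<_0$-comparable, to every point of $X$), and the $X$-elements interleaved between them show the cuts $X(a_k)$ are pairwise distinct. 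Thus $F/E_X$, with $E_X(a,b)\iff X(a)=X(b)$, is an infinite interpretable linear order. Note the asymmetry that makes this direction cleaner than yours: the $a_k$'s land in $F$ for free, whereas there is no reason the $X$-elements of the chain should be $<_0$-comparable to every point of $D\setminus X$.
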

\begin{proof}
Let $B$ be such that $X$ is defined over $B$. The assumption implies that for all $n<\omega$, we can find $a_1,b_1,\ldots,a_n,b_n\in D$ such that:

$\bullet$ if $a'_i$ is $E$-equivalent to $a_i$, then $a'_i \notin X$;

$\bullet$ $b_i\in X$ for all $i\leq n$;

$\bullet$ we have $S(a_1,b_1)\wedge S(b_1,a_2)\wedge S(a_2,b_2) \wedge \cdots \wedge S(a_n,b_n)$.

Consider the definable set $F=\{x\in D : (\forall y\in X) x<_0 y \vee y<_0 x\}$, that is the set of points strictly $\leq_0$-comparable to all points in $X$. Note that if $x\in D$ is not $E$-equivalent to any point in $X$, then $x\in F$, since it will even be $S$-comparable to all points in $X$. For $a\in F$, let $X(a)= \{x\in X : a<_0 x\}$.

\smallskip\noindent
\underline{Claim}: The sets $X(a)$, $a\in F$ are linearly ordered by inclusion.
\\
\emph{Proof}: Assume that $a,b\in F$ are such that $X(a)\nsubseteq X(b)$ and let $x\in X(a)\setminus X(b)$. We then have $a<_0 x$ and $\neg(b <_0 x)$. Since $b$ is in $F$, we must have $x<_0 b$. If there is $y\in X(b)\setminus X(a)$, then we have $a<_0 x<_0 b <_0 y <_0 a$ and $a<_0 a$ by transitivity of $<_0$, which is absurd. We conclude that $X(b)\subseteq X(a)$.

\smallskip
On $F$ we can define the equivalence relation \[E_X(a,b) \iff X(a)=X(b).\] By the assumptions on $X$, the quotient $F/E_X$ is infinite. It is also linearly ordered by $a\leq b \iff X(a)\subseteq X(b)$, which finishes the proof.\end{proof}

\section{Stable dimension}

This section is independent of the rest of the paper. We define the natural counterpart to op-dimension. We only show basic properties and leave its in depth study for later.
Throughout this section, we assume that $T$ is NIP.

\begin{definition}\label{definition: stable rank}
	Let $A$ be a set of parameters and $\pi(x)$ a partial type over $A$. We say that $\str(\pi,A)<\kappa$ if we cannot find the following:
	\begin{itemize}
		\item a tuple $a\models \pi$;
		\item infinite sequences $(I_i:i<\kappa)$ and $(J_i:i<\kappa)$ such that $(I_i+J_i:i<\kappa)$ are mutually indiscernible over $Aa$;
		\item tuples $(b_i:i<\kappa)$ such that $(I_i+b_i+J_i:i<\kappa)$ are mutually indiscernible over $A$, but for each $i<\kappa$, $I_i+b_i+J_i$ is not indiscernible over $Aa$.
	\end{itemize}
\end{definition}

Note that if we have such a witness to $\str(\pi,A)<\kappa$, then we can build one where the sequences have any given order type, by replacing them by Morley sequences of one of their limit types. In particular, we can ask for them to be indexed by $\mathbb Q$.

The following follows at once from the definitions.

\begin{lemma}
	If $\dpr(\pi,A)<\kappa$, then $\str(\pi,A)<\kappa$. In particular, if $T$ is NIP, then $\str(\pi,A)<|T|^+$.
\end{lemma}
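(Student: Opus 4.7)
The plan is to show the contrapositive: a witness for $\str(\pi,A)\geq \kappa$ produces a witness for $\dpr(\pi,A)\geq \kappa$ essentially for free, by inserting the distinguished elements $b_i$ into the middle of each pair $I_i+J_i$.

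More concretely, suppose we are given data witnessing $\str(\pi,A)\geq\kappa$: a tuple $a\models\pi$, sequences $(I_i+J_i:i<\kappa)$ which are mutually indiscernible over $Aa$, and tuples $(b_i:i<\kappa)$ such that the sequences $K_i:=I_i+b_i+J_i$ are mutually indiscernible over $A$, yet each $K_i$ fails to be indiscernible over $Aa$. I would take exactly this family $(K_i:i<\kappa)$ together with the tuple $a$ as my candidate witness for $\dpr(\pi,A)\geq \kappa$. Two things need to be checked, both of which are immediate from the hypotheses: first, the family $(K_i:i<\kappa)$ is mutually indiscernible over $A$, which is explicitly part of the assumption; second, no $K_i$ is indiscernible over $Aa$, which is also explicitly part of the assumption. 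This gives $\dpr(\pi,A)\geq \kappa$, contradicting the hypothesis $\dpr(\pi,A)<\kappa$.

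For the second sentence (the ``in particular''), I would simply invoke the recalled fact that NIP is equivalent to $\dpr(\pi,A)<|T|^+$ for every finitary partial type $\pi$, and then apply the first part of the lemma. There is no real obstacle here; the whole content is that the notion of stable dimension is a relaxation of the dp-rank, where one only records those failures of indiscernibility that come from inserting single elements into cuts of otherwise $Aa$-indiscernible sequences, while dp-rank records arbitrary failures of indiscernibility. Since the former is a special case of the latter, the inequality goes in the stated direction without any additional machinery.
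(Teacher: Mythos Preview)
Your argument is correct and is exactly what the paper has in mind: the paper simply remarks that the lemma ``follows at once from the definitions,'' and your contrapositive, taking $K_i=I_i+b_i+J_i$ as the witnessing family for $\dpr(\pi,A)\geq\kappa$, is precisely the immediate observation intended. The ``in particular'' is likewise handled as you describe, via the characterization of NIP by $\dpr(\pi,A)<|T|^+$.
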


The base change lemma is slightly harder to prove than for dp-rank. We start with a basic lemma about NIP.

\begin{lemme}\label{lem:mut ind over A}
For $i<\alpha$, let $I_i = (a_{i,j}:j<\beta)$ be an indiscernible sequence. Assume that the sequences $(I_i:i<\alpha)$ and mutually indiscernible over $\emptyset$ and the sequence of columns $((a_{i,j}:i<\alpha) : j<\beta)$ is indiscernible over $A$. Then the sequences $(I_i:i<\alpha)$ are mutually indiscernible over $A$.
\end{lemme}
\begin{proof}
To simplify notations, let us assume that $\alpha = 2$. The general case is similar. Start by increasing the sequence of columns to one indexed by $\mathbb Q$ of same EM-type over $A$.
If the sequences $I_0,I_1$ are not mutually indiscernible over $A$, then there is a formula $\phi(x_1,\ldots,x_n;y_1,\ldots,y_m)\in L(A)$ and four increasing tuples of elements of $\mathbb Q$: $\bar i := (i_1,\ldots,i_n)$, $\bar i' := (i'_1,\ldots,i'_n)$, $\bar j := (j_1,\ldots,j_m)$ and $\bar j' := (j'_1,\ldots,j'_m)$ such that $\models \phi(a_{0,\bar i},a_{1,\bar j})\wedge \neg \phi(a_{0,\bar i'},a_{1,\bar j'}).$

Now construct two sequences $\bar i_0 < \bar i_1 < \cdots $ and $\bar j_0 < \bar  j_1 < \cdots$ of increasing tuples of elements of $\mathbb Q$ so that for $k$ even $(\bar i_k,\bar j_k)$ has same order-type as $(\bar i,\bar j)$ and for $k$ odd, $(\bar i_k,\bar j_k)$ has same order-type as $(\bar i',\bar j')$. Then the sequence of $n+m$-tuples $(a_{0,\bar i_k}\hat{} a_{1,\bar j_k}:k<\omega)$ is indiscernible and the formula $\phi$ alternates infinitely on it. This contradicts NIP.
\end{proof}

\begin{lemma}\label{lem:base change stable}
	If $A\subseteq B$ and $\pi(x)$ is a partial type over $A$, then $\str(\pi,A)=\str(\pi,B)$.
\end{lemma}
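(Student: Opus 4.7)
The plan is to establish $\str(\pi,A)\geq \kappa \Leftrightarrow \str(\pi,B)\geq \kappa$ for every cardinal $\kappa$, along the lines of Lemma \ref{lem:base change distal}. One direction, $\str(\pi,A)\geq \kappa \Rightarrow \str(\pi,B)\geq \kappa$, is easy and proceeds by a conjugate-of-$B$ argument. The reverse direction is slightly more delicate and requires absorbing $B$-parameters into the witnessing sequences themselves; this should be identified as the main technical step.

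For $\str(\pi,A)\geq \kappa \Rightarrow \str(\pi,B)\geq \kappa$: given a witness $a\models \pi$, $(I_i+J_i:i<\kappa)$ mutually indiscernible over $Aa$ and $(b_i)$ with $(I_i+b_i+J_i:i<\kappa)$ mutually indiscernible over $A$ and each $I_i+b_i+J_i$ not indiscernible over $Aa$, a standard compactness argument (extending each sequence within its EM-type over $A$ and Ramsey-extracting) produces $B'\equiv_A B$ such that $(I_i+b_i+J_i:i<\kappa)$ remains mutually indiscernible over $B'$ and $(I_i+J_i:i<\kappa)$ remains mutually indiscernible over $B'a$. Non-indiscernibility of each $I_i+b_i+J_i$ over $Aa$ persists over the larger set $B'a$, so the same data witnesses $\str(\pi,B')\geq \kappa$. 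Applying an automorphism over $A$ sending $B'$ to $B$ (which leaves $\pi$ fixed, so transports $a$ to another realization of $\pi$) yields $\str(\pi,B)\geq \kappa$.

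For the harder direction $\str(\pi,B)\geq \kappa \Rightarrow \str(\pi,A)\geq \kappa$: take a witness $a,(I_i+J_i),(b_i)$ over $B$. Mutual indiscernibility over $Ba$ (resp.\ over $B$) trivially restricts to mutual indiscernibility over $Aa$ (resp.\ over $A$). The subtle point is that non-indiscernibility of $I_i+b_i+J_i$ over $Ba$ may be witnessed by a formula $\psi_i(\bar x;a,\bar c_i)$ with $\bar c_i\in B$, and such a formula need not be in $L(Aa)$. The fix is to absorb $\bar c_i$ into the $i$-th sequence: define $I_i^*, b_i^*, J_i^*$ by appending $\bar c_i$ to each element of $I_i$, $b_i$, $J_i$. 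Because $\bar c_{i'}\in B$ for every $i'$, the mutual indiscernibility of the original family over $B$ (resp.\ over $Ba$) forces mutual indiscernibility of the augmented family over $A$ (resp.\ over $Aa$); the key observation is that $\bar c_{i'}$ appears inside every element of the $i'$-th augmented sequence, so it already belongs to the parameter set used when asserting indiscernibility of the $i$-th sequence over the others. The formula $\psi_i^{\ast}(\bar x^{\ast};a)$ that reads $\bar c_i$ off the appended coordinates of any element of $\bar x^{\ast}$ and then applies $\psi_i$ lives in $L(Aa)$ and witnesses non-indiscernibility of $I_i^*+b_i^*+J_i^*$ over $Aa$, giving $\str(\pi,A)\geq \kappa$. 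The main obstacle is precisely this bookkeeping verification that parameter absorption preserves mutual indiscernibility when the base shrinks from $B$ to $A$.
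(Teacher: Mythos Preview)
Your parameter-absorption argument for the direction $\str(\pi,B)\geq\kappa\Rightarrow\str(\pi,A)\geq\kappa$ is correct and is essentially the paper's argument (the paper simply appends all of $B$ rather than the specific witnesses $\bar c_i$, but either works).

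The gap is in the other direction. You have reversed which implication is easy and which is hard. For $\str(\pi,A)\geq\kappa\Rightarrow\str(\pi,B)\geq\kappa$ you claim that a ``standard compactness argument'' produces $B'\equiv_A B$ over which \emph{both} $(I_i+b_i+J_i:i<\kappa)$ is mutually indiscernible and $(I_i+J_i:i<\kappa)$ is mutually indiscernible over $B'a$. This is not standard, and it is not clear it can be done. The usual Ramsey/Erd\H{o}s--Rado extraction lets you upgrade \emph{one} mutual-indiscernibility condition from $A$ to $B$ by replacing the sequences with ones of the same EM-type. Here you need two such conditions simultaneously, for two different families that differ exactly by the inserted single points $b_i$: the $b_i$'s cannot be ``extracted'', and the element $a$ imposes a second constraint that is incompatible with the first under naive extraction (indeed, you cannot make $(I_i+b_i+J_i)$ mutually indiscernible over $B'a$, since that would contradict the non-indiscernibility over $Aa$). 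A coheir-of-$\tp(B/A)$ trick would work if $A$ were a model, but $A$ is arbitrary here.

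This is precisely why the paper treats $\str(\pi,A)\geq\kappa\Rightarrow\str(\pi,B)\geq\kappa$ as the delicate direction. Its proof first uses Ramsey to arrange $(I_i+J_i)$ mutually indiscernible over $Ba$, then extends the sequences by infinitely many blocks $K_{i,n}$, finds candidate middle points $b_{i,n}$ in each new cut, invokes \cite[Lemma~2.8]{distal} to uniformly insert them while preserving mutual indiscernibility over $B$, and finally uses NIP to select an $n$ for which $\Delta$-indiscernibility over $Ba$ is recovered; a compactness argument then yields the witness. None of this machinery is accounted for in your sketch.
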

\begin{proof}
	If $(a,I_i,J_i,b_i:i<\kappa)$ is a witness to $\str(\pi,B)\geq \kappa$, then $(a,I'_i,J'_i,b'_i:i<\kappa)$ witnesses $\str(\pi,A)\geq \kappa$, where $I'_i$ is obtained from $I_i$ by concatenating a fixed enumeration of $B$ to the end of every element of it, and same for $J'_i$ and $b'_i$.
	
	Conversely, assume that $(a,I_i,J_i,b_i:i<\kappa)$ is a witness to $\str(\pi,A)\geq \kappa$. By Ramsey and compactness, we can find sequences $I'_i,J'_i$ of same EM-type as $I_i,J_i$ over $Aa$ such that $(I'_i+J'_i:i<\kappa)$ are mutually indiscernible over $Ba$. Replacing $I_i,J_i$ by $I'_i,J'_i$, we can assume that $(I_i+J_i:i<\kappa)$ are mutually indiscernible over $Ba$. Without loss, all sequences are indexed by $\mathbb Q$. Increase each sequence to $I_i+J_i+K_{i,0}+K_{i,1}+\cdots$, preserving mutual indiscernibility over $Ba$. Then for each $n<\omega$, the family of pairs $((I_i+J_i+K_{i,<n},K_{i,\geq n}):i<\kappa)$ has the same type as $((I_i,J_i):i<\kappa)$ over $Ba$. For each such $n$, let $b_{i,n}$ be such that \[I_i,b_i,J_i \equiv_{Aa} I_i+J_i+K_{i,<n},b_{i,n},K_{i,\geq n}.\]
	Thinking of the sequences $(I_i+J_i+K_{i,<n}:i<\kappa)$ as rows of an array, the sequence of columns is indiscernible. Applying \cite[Lemma 2.8]{distal} to this sequence of columns, there is an automorphism $\sigma$ fixing $A,I_i,J_i,K_{i,<\omega}$ such that for each $n$, the sequence of columns of the array $(I_i+J_i+K_{i,<n}+\sigma(b_{i,n})+K_{i,\geq n}:i<\kappa)$ is indiscernible over $B$. By Lemma \ref{lem:mut ind over A}, for each $n$, the sequences $(I_i+J_i+K_{i,<n}+\sigma(b_{i,n})+K_{i,\geq n}:i<\kappa)$ are mutually indiscernible over $B$. Given a finite set $X\subseteq \kappa$, a finite set $\Delta$ of formulas and a finite $B_0\subseteq B$, by shrinking of indiscernibles in NIP (e.g. \cite[Proposition 3.32]{NIPbook}), there is $n=n(\Delta)$ such that the sequences $(K_{i,<n}+K_{i,\geq n}:i\in X)$ are mutually $\Delta$-indiscernible over $B_0\sigma(a)$. By compactness, we can find $(a',I'_i,J'_i,b'_i:i<\kappa)$ so that 
	\[a',I'_i,b'_i,J'_i \equiv_{A} a,I_i,b_i,J_i\]
	and $(I'_i+b'_i+J'_i:i<\kappa)$ are mutually indiscernible over $B$. This witnesses $\str(\pi, B)\geq \kappa$.
\end{proof}

We can now define $\str(\pi)$ as being equal to $\str(\pi,A)$ for some/any $A$ over which $\pi$ is defined. As usual, we define $\str(a/A)$ as $\str(\tp(a/A))$.

\begin{prop}\label{prop:strank}
	Let $\pi(x)$ be any partial type and $\kappa$ a cardinal. Then the following are equivalent:
	\begin{enumerate}
		\item $\str(\pi)<\kappa$;
		\item given $A$ over which $\pi$ is defined, $a\models \pi$ and dense endless sequences $( I_i:i<\kappa)$ which are mutually indiscernible over $A$ and mutually stable over $Aa$, there is $i<\kappa$ such that $I_i$ is indiscernible over $Aa$;
		\item 	given $A$ over which $\pi$ is defined, $a\models \pi$ and dense endless sequences $( I_i:i<\lambda)$ which are mutually indiscernible over $A$ and mutually stable over $Aa$, we can find $X\subseteq \lambda$, $|X|<\kappa$ such that the sequences $(I_i : i\in\lambda\setminus X)$ are mutually indiscernible over $Aa$.
	\end{enumerate}
\end{prop}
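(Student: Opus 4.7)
The plan is to establish the cycle of implications $(1)\Rightarrow (2)\Rightarrow (3)\Rightarrow (1)$, with the easy direction being $(3)\Rightarrow (1)$.

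For $(3)\Rightarrow (1)$: Given witnesses $(a,I_i,J_i,b_i:i<\kappa)$ of $\str(\pi,A)\geq \kappa$, set $L_i := I_i+b_i+J_i$. These are mutually $A$-indiscernible by assumption and no $L_i$ is indiscernible over $Aa$ (the element $b_i$ breaks indiscernibility). I claim $(L_i)_i$ is mutually stable over $Aa$: extending each $I_i$ on the left by a Morley sequence $M_i^-$ of $\lim(\op(I_i))$ and each $J_i$ on the right by a Morley sequence $M_i^+$ of $\lim(J_i)$, built inductively over everything constructed so far, yields indiscernible sequences $M_i^-+L_i+M_i^+$ whose outer halves $(M_i^-+M_i^+)_i$ inherit mutual $Aa$-indiscernibility from that of $(I_i+J_i)_i$, via standard properties of limit types in NIP. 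This is precisely condition (1) of Proposition \ref{prop:mutually stable}. Applying (3) to this family with $\lambda=\kappa$ would produce $X\subsetneq \kappa$ outside which $(L_i)_i$ is mutually $Aa$-indiscernible, contradicting non-indiscernibility of each $L_i$ over $Aa$.

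For $(1)\Rightarrow (2)$: If (2) fails, take $(I_i:i<\kappa)$ mutually $A$-indiscernible, mutually stable over $Aa$, with no $I_i$ indiscernible over $Aa$. The goal is to construct a witness of $\str(\pi,A)\geq \kappa$, contradicting (1). First apply condition (2) of Proposition \ref{prop:mutually stable} to obtain Morley-type sequences $J_i^0,J_i^1$ such that $J_i^0+I_i+J_i^1$ is indiscernible and $(J_i^0+J_i^1)_i$ is mutually indiscernible over $Aa$. Set $A':=A\cup\bigcup_i(J_i^0\cup J_i^1)$. Each $I_i$ remains stable but not indiscernible over $A'a$, so by a well-ordering of a small exception set $S_i\subseteq I_i$ with $I_i\setminus S_i$ indiscernible over $A'a$ and adding back the elements of $S_i$ one at a time until indiscernibility first breaks, we isolate a single $b_i\in S_i$ and a subsequence decomposition $U_i+b_i+V_i\subseteq I_i$ with $U_i+V_i$ indiscernible over $A'a$ and $U_i+b_i+V_i$ not. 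Mutual $A'$-indiscernibility of $(U_i+b_i+V_i)_i$ is inherited from the mutual $A$-indiscernibility of $(I_i)_i$ combined with the construction of the $J_i^0,J_i^1$, and mutual $A'a$-indiscernibility of $(U_i+V_i)_i$ follows from individual $A'a$-indiscernibility together with the absorption of the outer pieces into $A'$. This exhibits $\str(\pi,A')\geq \kappa$; by the base-change Lemma \ref{lem:base change stable}, $\str(\pi,A)\geq \kappa$, contradicting (1).

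The implication $(2)\Rightarrow (3)$ is a cleaner application of the same idea: given $(I_i:i<\lambda)$ mutually $A$-indiscernible and mutually stable over $Aa$, let $X:=\{i<\lambda:I_i$ is not indiscernible over $Aa\}$. If $|X|\geq \kappa$, any $X_0\subseteq X$ of size $\kappa$ gives a subfamily that inherits mutual $A$-indiscernibility and mutual $Aa$-stability, with no member $Aa$-indiscernible — contradicting (2). Hence $|X|<\kappa$, and the remaining family, consisting of individually $Aa$-indiscernible members of a mutually $Aa$-stable family, assembles into a mutually $Aa$-indiscernible family via the same outer-extension device (the auxiliary Morley extensions, absorbed into an expanded base, collapse to genuine mutual $Aa$-indiscernibility of the original sequences). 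The main obstacle is the coordinated single-element extraction in $(1)\Rightarrow (2)$: isolating one $b_i\in I_i$ per index in such a way that both the non-indiscernibility of $U_i+b_i+V_i$ over $Aa$ and the mutual-indiscernibility conditions required of a $\str$-witness hold uniformly across the $\kappa$ indices. The key device is the base expansion $A\rightsquigarrow A'$ absorbing the limit-type extensions given by mutual stability, which provides the room for a coherent extraction, with Lemma \ref{lem:base change stable} transferring the resulting witness back to $A$.
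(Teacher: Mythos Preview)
Your argument has a genuine gap in both $(1)\Rightarrow(2)$ and $(2)\Rightarrow(3)$, stemming from the same underlying issue: you cannot pass from \emph{individual} $Aa$-indiscernibility to \emph{mutual} $Aa$-indiscernibility just by invoking mutual stability and ``absorbing outer pieces into the base.''

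In $(2)\Rightarrow(3)$, you set $X=\{i : I_i\text{ not indiscernible over }Aa\}$, show $|X|<\kappa$, and then assert that the remaining family ``assembles into a mutually $Aa$-indiscernible family via the same outer-extension device.'' But knowing that each $I_i$ is indiscernible over $A'a$ (where $A'$ contains all the Morley extensions $J_j^0,J_j^1$) does \emph{not} tell you that $I_i$ is indiscernible over $A'a\cup I_{\neq i}$: the points of $I_j$ for $j\neq i$ sit strictly between $J_j^0$ and $J_j^1$, and indiscernibility over the outer pieces does not control behaviour over the inner ones. This is precisely the difficulty the paper's proof addresses: for finite $\kappa=n+1$ it proceeds by induction on $\lambda$, and the key move is to apply (2) not over $A$ but over the enlarged base $B=A\cup\bigcup_j(J_j^0\cup J_j^1)$, finding a single $I_i$ indiscernible over $Ba$. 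This stronger conclusion is what allows one to show $(I_j:j\neq i)$ are mutually stable over $AI_ia$ and thus reduce $\lambda$ by one. Your $X$ only records indiscernibility over $Aa$, which is too weak to drive such an induction.

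The same problem appears in your $(1)\Rightarrow(2)$: having extracted $U_i+V_i\subseteq I_i$ indiscernible over $A'a$, you claim $(U_i+V_i)_i$ is mutually indiscernible over $A'a$. But again, $U_i+V_i$ being indiscernible over $A'a$ says nothing about its behaviour over $U_j+V_j\subseteq I_j$ for $j\neq i$. The paper sidesteps this entirely: it takes the outer pieces $J_i^0,J_i^1$ themselves as the witnesses (their mutual $Aa$-indiscernibility is exactly what mutual stability gives), and handles the fact that the failure of indiscernibility may involve a finite \emph{tuple} $\bar a_i\subseteq I_i$ rather than a single point by regrouping each $J_i^0,J_i^1$ into blocks of size $|\bar a_i|$, so that $\bar a_i$ becomes a single element of the blocked sequence. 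Your single-element extraction is an attempt to avoid this blocking trick, but it trades one difficulty for another that you have not resolved.
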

\begin{proof}
	$(1)\Rightarrow (2)$: Let $A,a$ and $(I_i:i<\kappa)$ be as in (2). Construct sequences $J_i^0, J_i^1$ indexed by $\mathbb Z$ so that $I_i \unlhd_{Aa} J_i^0+I_i+J_i^1$. By the assumption of mutual stability, the sequences $(J_i^0+J_i^1:i<\kappa)$ are mutually indiscernible over $Aa$. Assume that the conclusion of (2) fails. Then for each $i<\kappa$, there is an integer $n_i$ and a subsequence $\bar a_i$ of $I_i$ of size $n_i$ such that $J_i^0+\bar a_i+J_i^1$ is not indiscernible over $Aa$. If $J_i^0 = (b_i : i\in \mathbb Z)$, define ${J'}_i^0 = (b_{kn_i}{}^\frown \cdots {}^\frown b_{kn_i+n_i-1} : k\in \mathbb Z)$. Define ${J'}_i^1$ similarly. Having done this for all $i<\kappa$, we see that the family $({J'}_i^0+(\bar a_i)+{J'}_i^1:i<\kappa)$ witnesses $\str(\pi)\geq \kappa$.

	$(2)\Rightarrow (3)$: The argument is similar as for the analogous result for dp-rank from \cite{KOU} (also presented in \cite[Proposition 4.17]{NIPbook}). The case where $\kappa$ is infinite is rather straightforward: If the conclusion of (3) fails, we can construct inductively a sequence $(\delta_t:t<\kappa)$ of elements of $\lambda$ and a sequence $(\Delta_t:t<\kappa)$ of finite subsets of $\lambda$ such that:
	\begin{itemize}
		\item the sequence $I_{\delta_t}$ is not indiscernible over $Aa \cup \bigcup \{I_i : i\in \Delta_t\}$;
		\item the sets $\Delta'_t:= \Delta_t \cup \{\delta_t\}$, $t<\kappa$, are pairwise disjoint.
	\end{itemize}
	For $t<\kappa$, let $J_t$ be the sequence of columns of the array whose rows are the sequences $(I_i :i\in \Delta'_t)$. Since by hypothesis those rows are not mutually indiscernible over $Aa$, Lemma \ref{lem:mut ind over A} implies that the sequence $J_t$ is not indiscernible over $Aa$. Furthermore, the sequences $(J_t:t<\kappa)$ are mutually stable over $A$: take the same sequences witnessing mutual stability of the $I_t$'s (in the sense of Proposition \ref{prop:mutually stable} (1)). This shows that (2) fails.
		
	
	For $\kappa=n+1$ finite, we prove the result by induction on $\lambda$. If $\lambda\leq n$, then we can take $X=\lambda$. Assume that $\lambda=n+k+1$ is finite. Construct inductively endless sequences $J_i^0, J_i^1$ so that $I_i \unlhd J_i^0+I_i+J_i^1$, each one over everything built so far. Let $B=A \cup \bigcup \{J_i^0,J_i^1:i<\lambda\}$. Then the sequences $(I_i:i<\lambda)$ are mutually indiscernible over $B$ and mutually stable over $Ba$. By (2), there is $i(*)<\lambda$ such that $I_{i(*)}$ is indiscernible over $Ba$.
	
	\smallskip
	\underline{Claim}: The sequences $(I_i:i\neq i(*))$ are mutually stable over $AI_{i(*)}a$.
	
	\emph{Proof}: It suffices to show that the sequences $(J_i ^0+J_i ^1 : i\neq i(*))$ are mutually indiscernible over $AI_{i(*)}a$. If not, then there is some formula $\phi(\bar u; \bar v; \bar a)$ witnessing it, where $\bar u$ is a tuple of elements from $(J_i ^0+J_i ^1 : i\neq i(*))$, $\bar v$ a tuple of elements from $I_{i(*)}$ and $\bar a$ a tuple of elements from $Aa$. Now by mutual stability, $J^0_{i(*)}+J^1_{i(*)}$ is indiscernible over $(J_i ^0+J_i ^1 : i\neq i(*))\cup Aa$. By indiscernibility, $\bar u$ can be taken to be any tuple of the right order type from $I_{i(*)}$. Therefore by Fact \ref{fact:finite cofinite}, if we take $\bar u'$ in $J^0_{i(*)}$ of the same order type, we also have $\phi(\bar u; \bar v; \bar a)$. But this contradicts the fact that $(J_i ^0+J_i ^1 : i\neq i(*))$ are mutually indiscernible over $AaJ^0_{i(*)}J^1_{i(*)}$.
	
	\smallskip
	
	
	By induction hypothesis, working over the base set $AI_{i(*)}$, there is $X_0\subseteq X\setminus \{i(*)\}$ of size at most $n$ such that the sequences $(I_i:i\notin X_0\cup \{i(*)\})$ are mutually indiscernible over $AI_{i(*)}a$. If $I_{i(*)}$ is indiscernible over $Aa \cup (I_i:i\notin X_0\cup \{i(*)\})$, then we can take $X=X_0$. Assume that this is not the case and we reach a contradiction as in the previous claim. As the sequences $(I_i:i\notin X_0\cup \{i(*)\})$ are mutually indiscernible over $AI_{i(*)}a$ and the $J^0_i$'s are built over all the $I_i$'s and $Aa$, the parameters from $(I_i:i\notin X_0\cup \{i(*)\})$ needed to witness that $I_{i(*)}$ is not indiscernible can be taken in $(J^0_i:i\notin X_0\cup \{i(*)\})$ instead. But then $I_{i(*)}$ is not indiscernible over $Ba$: contradiction.

	
	Finally, the case of infinite $\lambda$ can be deduced easily from the finite case as in \cite[Proposition 4.17]{NIPbook}.
	
	$(3)\Rightarrow (1)$ is clear.
\end{proof}

\begin{prop}\label{proposition: additivity of st-rank}
	Let $A$ be any set of parameters and $a,b$ two tuples. If $\str(a/A)<\kappa_1$ and $\str(b/Aa)<\kappa_2$, then $\str(a,b/A)< \kappa_1 + \kappa_2 -1$.
\end{prop}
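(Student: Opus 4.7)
The plan is to imitate the additivity proof for op-dimension given just above. The weaker ``mutually stable outside a small set'' conclusion used there must now be strengthened to ``mutually indiscernible outside a small set'', and this is exactly what characterization (3) of Proposition \ref{prop:strank} provides for stable dimension.

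Concretely, take a family $(I_i : i<\lambda)$ which is mutually indiscernible over $A$ and mutually stable over $Aab$; my goal is to produce $X\subseteq \lambda$ with $|X|<\kappa_1+\kappa_2-1$ such that $(I_i : i\in \lambda\setminus X)$ is mutually indiscernible over $Aab$, which by Proposition \ref{prop:strank}(3) gives $\str(a,b/A)<\kappa_1+\kappa_2-1$. The preliminary observation is that mutual stability descends to smaller bases: the auxiliary sequences $J_i^0, J_i^1$ witnessing Proposition \ref{prop:mutually stable}(1) over $Aab$ witness the same condition over any smaller set, so in particular $(I_i)$ is mutually stable over $Aa$. Combined with mutual indiscernibility over $A$, Proposition \ref{prop:strank}(3) applied to $\tp(a/A)$ produces $X_1\subseteq \lambda$ with $|X_1|<\kappa_1$ such that $(I_i : i\in \lambda\setminus X_1)$ is mutually indiscernible over $Aa$.

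Second, the restricted family is still mutually stable over $Aab$ (a sub-family of a mutually stable family is mutually stable), and now it is mutually indiscernible over $Aa$. Applying Proposition \ref{prop:strank}(3) to $\tp(b/Aa)$ yields $X_2\subseteq \lambda\setminus X_1$ with $|X_2|<\kappa_2$ such that $(I_i : i\in \lambda\setminus(X_1\cup X_2))$ is mutually indiscernible over $Aab$. Setting $X=X_1\cup X_2$, cardinal arithmetic gives $|X|<\kappa_1+\kappa_2-1$ (for finite $\kappa_j$ the sum is at most $(\kappa_1-1)+(\kappa_2-1)$, and for infinite $\kappa_j$ both bounds are absorbed into $\max(\kappa_1,\kappa_2)$), finishing the argument.

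I do not expect any substantive obstacle: the real work has already been absorbed into the equivalences of Proposition \ref{prop:strank}, whose $(1)\Rightarrow(2)\Rightarrow(3)$ direction needed a Ramsey/compactness manipulation and a delicate induction in the finite case. The only point to check in the present proof is the descent of mutual stability under shrinking the base, which is immediate from condition (1) of Proposition \ref{prop:mutually stable}.
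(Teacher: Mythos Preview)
Your proof is correct and follows exactly the same two-step argument as the paper: apply Proposition~\ref{prop:strank}(3) first to $a$ over $A$, then to $b$ over $Aa$, and take the union of the two exceptional sets. Your added remarks about descent of mutual stability to smaller bases and to subfamilies make explicit what the paper's terse proof leaves implicit, but the strategy is identical.
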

\begin{proof}
	Let $(I_i :i<\lambda)$ be mutually indiscernible over $A$ and mutually stable over $Aab$. We can find $X_1\subseteq \lambda$, $|X_1|<\kappa_1$ such that the sequences $(I_i:i\in \lambda\setminus X_1)$ are mutually indiscernible over $Aa$. Next, we find $X_2\subseteq \lambda$, $|X_2|<\kappa_2$ such that the sequences $(I_i:i\in \lambda\setminus (X_1\cup X_2))$ are mutually indiscernible over $Aab$. This shows that $\str(a,b/A)<\kappa_1+\kappa_2-1$.
\end{proof}
\ignore{
\subsection{An equivalence relation and a generically stable type}

Let $A$ be a set of parameters and $a_*\in \monster$ a tuple. Let $( I_i,J_i,a_{*,i}: i<\mu)$ be a maximal family that witnesses $\str(a_*/A)\geq n$ as in Definition \ref{definition: stable rank} (where the $b_i$'s there are the $a_{*,i}$'s here). By compactness, we may assume that the sequences $I_i$ and $J_i$ are ordered by $\mathbb Q+ \mathbb Q$ and write them as $I_i^0+I_i^1$ and $J_i^1+J_i^0$ in the obvious way. Set $B=A\cup \bigcup \{I_i,J_i :i<n\}$ and $B'=A\cup \bigcup \{I^0_i,J^0_i : i<n\}$. Let $p=\tp(a_*/B)$ and $q=\tp(a_*(a_{*,i})_{i<n} /B)$. For $i<n$, define also $r_i =\lim(I_i/B)(=\lim(\op(J_i)/\monster))$.

For any two realizations $a,b$ of $p$, define $a \rel E b$ if there does not exist tuples $(a_i)_{i<n}$ such that $a{}^\frown(a_i)_{i<n}$ realizes $q$ and for some $i<n$, $a_i\models r_i|Bb$. Note that $a \rel E b$ is a $\bigvee$-definable relation.

\begin{prop}\label{prop:stable equivalence relation}
	The relation $E$ defined above is an equivalence relation on realizations of $p$.
\end{prop}
\begin{proof}
	Let $a,b\models p$ and assume that $\neg a \rel E b$. Then there are $(a_i)_{i<n}$ such that $a{}^\frown(a_i)_{i<n}\models q$ and for some $i<n$, $a_i \models r_i|Bb$. Assume for simplicity that $i=0$. Let $(b_i)_{i<n}$ be such that $b{}^\frown(b_i)_{i<n}\models q$. Then for some automorphism $\sigma\in \aut(\monster/Bab)$, $\sigma(a_0)\models r_0|Bb(b_i)_{i<n}$. Replacing each $a_i$ by $\sigma(a_i)$, we may assume that $a_0\models r_0|Bb(b_i)_{i<n}$. We can then find two sequences $\bar c_0, \bar c_1$ ordered by a large order such that $\op(\bar c_1)+\op(\bar c_0)+(a_0)$ is a Morley sequence of $r_0$ over $Bb(b_i)_{i<n}$. It follows that the sequences $I_0+\bar c_0$, $\bar c_1+J_0$, $I_1+J_1,\ldots, I_{n-1}+J_{n-1}$ are mutually indiscernible over $Ab$ and the sequences $I_0+a_0+\bar c_0, \bar c_1+b_0+J_0, I_1+b_1+J_1,\ldots,I_{n-1}+b_{n-1}+J_{n-1}$ are mutually indiscernible over $A$. Take now some $c\models p$. As $I_0+J_0$ is indiscernible over $AI_{>0}J_{>0}c$, the sequence $\bar c_0+\bar c_1$ is stable over $AI_{>0}J_{>0}c$. Hence removing at most $|T|$ many points from those sequences, we may assume that $I_0+\bar c_0+\bar c_1+J_0$ is indiscernible over $AI_{>0}J_{>0}c$. As $\str(c/A)=n$, one of $I^1_0+a_0+\bar c_0$, $\bar c_1+b_0+J^1_0$ or $I^1_i+b_i+J^1_i$, $0<i<n$ is indiscernible over $B'c$. This implies that either $\neg a \rel E c$ or $\neg b \rel E c$ holds.
	
	Since we have $a E a$ for all $a$, taking $c=a$ yields that $\neg a \rel E b$ implies $\neg b \rel E a$. Hence $E$ is symmetric. Then $E$ is transitive since we have proved that $a \rel E c$ and $b \rel E c$ imply $a \rel E b$.
\end{proof}

Say that a realization $a$ of $p$ is independent from a set $D\supseteq B$ if there is $a{}^\frown (a_i)_{i<n}\models q$ such that $(a_i)_{i<n} \models \left (\bigotimes_{i<n} r_i\right )|D$ (note that the types $r_i$ pairwise commute, hence it does not matter in which order the product is taken).

We now want to say that for $a\models p$ the type of $a/E$ is generically stable over $B$. However, since $E$ is $\bigvee$-definable, it is not clear what this means. It might be worth developping a theory of quotients by $\bigvee$-definable equivalence relations to make sense of that. We prefer to leave this for a later occasion and state the following proposition as a substitute. Note that if $E$ were definable, this would precisely say that any permutation of the sequence $(a_1/E,\ldots,a_n/E)$ is elementary. 

\begin{prop}\label{prop:quotient is generically stable}
	With notations as above, let $a_1,\ldots,a_n$ be realizations of $p$, with each $a_i$ independent from $Aa_{\neq i}$. Then for any $\tau\in \mathfrak S_n$, there is $\sigma\in \aut(\monster)$ such that $\sigma(a_k) \rel E a_{\tau(k)}$.
\end{prop}
\begin{proof}

	As $a_1$ is independent from $Aa_{>1}$, there is $a_1{}^\frown (a_{1,i})_{i<n} \models q$ such that \[(a_{1,i})_{i<n} \models \left \left (\bigotimes_{i<n} r_i\right ) \right | Aa_{>1}.\]
	
\end{proof}

\subsubsection{Finitely homogeneous structures}

Assume that $T$ is finitely homogeneous, that is admits elimination of quantifiers in the finite relational language $L$. Let $n$ be the maximal arity of relations in $L$. Then the EM-type of an indiscernible sequence $I$ over any set $A$ is determined by $\tp(\bar a/A)$ for any $\bar a\subset I$ a subsequence of size $n$.

In this section, we will consider indiscernible sequences which may be finite or infinite. If $I$ is a finite sequence, we say that $I$ is indiscernible (over $A$) if it has size at least $n$ and extends to an infinite indiscernible sequence (over $A$).

\begin{lemma}
	If $T$ is finitely homogeneous and NIP then for every $k$, there is some $N<\omega$ such that the following holds:
	
	If $I_1+I_2+I_3$ is an indiscernible sequence of $k$-tuples over a set $A$, with $I_1$ and $I_3$ of size at least $N$, and $b$ a $k$-tuple of elements of $\monster$ such that $I_1+I_3$ is indiscernible over $b$, \underline{then} there is $I_2'\subseteq I_2$ of size $\leq N$ such that $I_1+(I_2\setminus I'_2)+I_3$ is indiscernible over $Ab$.
	
\end{lemma}
\begin{proof}
	Let $n$ be the maximal arity of relations in the language and let $N_0$ be the maximum of $n$ and the integer given by Fact \ref{fact:finite co finite} applied to $\Delta = L$. Consider the language $L_* = L\cup \{\mathbf A, \mathbf I_1, \mathbf I_2, \mathbf I_3, <, \mathbf b\}$, where $\mathbf A$ is a unary predicate, $\mathbf I_1,\mathbf I_2,\mathbf I_3$ are $k$-ary predicates and $<$ is a $2k$-ary predicate and $\mathbf b$ is a $k$-ary constant symbol. For $\kappa\leq \aleph_0$ a cardinal, let $T_\kappa$ be the theory that says:
	
	$\bullet_0$ $\mathbf I_1$, $\mathbf I_3$ have each size at least $\kappa$;
	
	$\bullet_1$ $<$ linearly orders $\mathbf I_1+\mathbf I_2+\mathbf I_3$ and the resulting sequence is indiscernible over $\mathbf A$;
	
	$\bullet_2$ there is no $I'_2\subseteq I_2$ of size $N_0$ such that $\mathbf I_1+(\mathbf I_2 \setminus I'_2)+\mathbf I_3$ is indiscernible over $\mathbf d$.
	
	\noindent
	By assumption on $N_0$, $T_{\aleph_0}$ is inconsistent. By compactness, there is $N$ such that $T_N$ is inconsistent. This gives what we want. 
\end{proof}

\begin{thm}
	Assume that $T$ is finitely homogeneous, NIP and not distal. Then there is a finite set of parameters $A$ and a non-realized type $p$ in $\monster^{eq}$ which is $A$-invariant and generically stable.
\end{thm}
\begin{proof}
	As $T$ is not distal, there is a finite tuple $a$ such that $\str(a/\emptyset)>0$ (in fact we can take $a$ to be a singleton by \cite{distal}, Theorem 2.28, but we do not need this). By finite homogeneity, $\str(a/\emptyset)=n<\omega$.
\end{proof}
}

\section*{Acknowledgment}

Thanks to Sergei Starchenko for comments on a previous version of this paper and the anonymous referee for an extremely helpful report.

\bibliography{tout.bib}

\end{document}